\documentclass[hidelinks,letterpaper,reqno]{amsart}
\usepackage{graphicx}
\usepackage[margin = 1 in]{geometry}
\usepackage{subcaption}
\usepackage{tabularx}
\usepackage{booktabs}
\usepackage{array}
\usepackage{amsmath}
\usepackage{amsfonts}
\usepackage{amssymb}
\usepackage{amsthm}
\usepackage{thmtools}
\usepackage[scr]{rsfso}
\usepackage[normalem]{ulem}

\usepackage{permute}
\usepackage{slashed}

\usepackage[dvipsnames]{xcolor}

\usepackage{comment}
\usepackage{changepage}

\usepackage[T1]{fontenc}
\usepackage{makecell}

\usepackage{tikz}
\usetikzlibrary{calc}

\allowdisplaybreaks

\usepackage[utf8]{inputenc}
\usepackage[T1]{fontenc}

\usepackage[pagebackref = true, bookmarksdepth=2, linktocpage=true, hypertexnames=false]{hyperref}
\hypersetup{
	colorlinks   = true,
	citecolor    = Green,
	linkcolor    = Blue,
	urlcolor     = Blue
}

\usepackage{graphicx,amsmath,amssymb,amsthm,paralist,color,tikz-cd}
\usepackage{mathrsfs}
\usepackage{mathtools}
\usepackage{setspace}
\usepackage[color=blue!30]{todonotes}

\usepackage{amscd}
\usepackage{bbm}

\usepackage[capitalize]{cleveref}

\DeclareMathOperator{\interior}{int}

\DeclareMathOperator{\Id}{Id}
\DeclareMathOperator{\tr}{tr}

\DeclareMathOperator{\triv}{\mathbf{1}}

\DeclareMathOperator{\SO}{SO}

\DeclareMathOperator{\U}{U}
\DeclareMathOperator{\F}{F}

\DeclareMathOperator{\T}{T}

\DeclareMathOperator{\Ad}{Ad}

\DeclareMathOperator{\Sob}{\mathcal{S}}
\DeclareMathOperator{\Lie}{Lie}
\DeclareMathOperator{\Ind}{Ind}
\DeclareMathOperator{\compactsmooth}{\mathit{C}_{\mathrm{c}}^\infty}
\DeclareMathOperator*{\Hilbertoplus}{\widehat{\bigoplus}}

\newcommand{\N}{\mathbb{N}}
\newcommand{\Z}{\mathbb{Z}}

\newcommand{\R}{\mathbb{R}}
\newcommand{\C}{\mathbb{C}}

\newcommand{\LieG}{\mathfrak{g}}

\newcommand{\LieN}{\mathfrak{n}}

\newcommand{\LieK}{\mathfrak{k}}

\newcommand{\critexp}{\delta_\Gamma}

\newcommand{\BMS}{m^\mathrm{BMS}}
\newcommand{\Haar}{m^\mathrm{Haar}}
\newcommand{\BR}{m^\mathrm{BR}}
\newcommand{\BRstar}{m^{\mathrm{BR}_*}}

\newcommand{\bsl}{\backslash}

\newcounter{constC}

\Crefname{enumi}{}{}
\creflabelformat{enumi}{#2(#1)#3}
\Crefname{subsection}{Subsection}{Subsections}

\crefformat{enumi}{(#2#1#3)}

\newtheorem{thm}{Theorem}[section]
\newtheorem{lem}[thm]{Lemma}

\newtheorem{prop}[thm]{Proposition}

\newtheorem{cor}[thm]{Corollary}

\theoremstyle{definition}
\newtheorem{definition}[thm]{Definition}
\newtheorem*{definition-nono}{Definition}

\newtheorem*{conjecture-nonum}{Conjecture}

\theoremstyle{remark}
\newtheorem{remark}[thm]{Remark}

\newtheoremstyle{case}{}{}{}{}{}{:}{ }{}
\theoremstyle{case}

\numberwithin{equation}{section}

\renewcommand{\H}{\mathbb{H}}

\newcommand{\mc}{\mathcal}

\newcommand{\mrm}{\mathrm}

\newcommand{\G}{\Gamma}
\renewcommand{\d}{\delta}
\newcommand{\e}{\varepsilon}
\renewcommand{\l}{\lambda}

\newcommand{\s}{\sigma}

\renewcommand{\t}{\tau}

\renewcommand{\k}{\kappa}

\newcommand{\set}[1]{\left\{#1\right\}}
\renewcommand{\r}{\rightarrow}

\def\multiset#1#2{\ensuremath{\left(\kern-.3em\left(\genfrac{}{}{0pt}{}{#1}{#2}\right)\kern-.3em\right)}}

\newcommand{\norm}[1]{\left\lVert#1\right\rVert}

\newcommand{\supp}{\mathrm{supp}}

\newcommand{\Hcal}{\mc{H}}
\newcommand{\Ical}{\mc{I}}
\newcommand{\Jcal}{\mc{J}}

\newcommand{\Lcal}{\mc{L}}
\newcommand{\Mcal}{\mc{M}}

\newcommand{\Ucal}{\mc{U}}
\newcommand{\Vcal}{\mc{V}}

\newcommand{\Zcal}{\mc{Z}}

\newcommand{\Ghat}{\widehat{G}}
\newcommand{\GhatSD}{\widehat{G}_{\mrm{s}\textrm{-}\mrm{d}}}

\newcommand{\temp}{\mrm{temp}}
\newcommand{\qcomp}{\mrm{q}\textrm{-}\mrm{comp}}

\newcommand{\Mhat}{\widehat{M}}
\newcommand{\MhatSD}{\widehat{M}_{\mathrm{s}\textrm{-}\mathrm{d}}}

\newcommand{\GmodGamma}{\G\bsl G}
\newcommand{\HSmodGamma}{\G\bsl\H^{d + 1}}
\newcommand{\LtwoGmodGamma}{\Lcal}

\AtBeginDocument{%
	\def\MR#1{}
}

\begin{document}

	\title[Strong Spectral Gap]{Strong Spectral Gap for Geometrically Finite Hyperbolic Manifolds}
	\author{Dubi Kelmer}
	\address{Department of Mathematics, Boston College, Chestnut Hill, MA 02467, USA}
	\email{dubi.kelmer@bc.edu}
	\author{Osama Khalil}
	\address{Department of Mathematics, Statistics, and Computer Science, University of Illinois Chicago, Chicago, IL 60607, USA}
	\email{okhalil@uic.edu}
	\author{Pratyush Sarkar}
	\address{Department of Mathematics, ETH Zürich, 8092 Zürich, Switzerland}
	\email{psarkar@ethz.ch}

	\begin{abstract}
		Let $\G < G := \SO(d+1, 1)$ for $d \geq 1$ be a Zariski dense, geometrically finite, discrete subgroup with critical exponent strictly greater than $d/2$. We show that $L^2(\GmodGamma)$ admits a strong spectral gap, confirming a conjecture of Mohammadi and Oh. 
		This extends the spherical spectral gap on $L^2(\Gamma\backslash \H^{d+1}) \cong L^2( \GmodGamma / \SO(d+1))$, which follows by the works of Lax--Phillips, Patterson, and Sullivan by different methods.
		As a consequence, we establish rates of decay of matrix coefficients, and of exponential mixing of the frame flow, that are explicitly determined by the size of the strong spectral gap.
		
	\end{abstract}
	
	\maketitle
	
	\setcounter{tocdepth}{1}
	\tableofcontents
	
	\section{Introduction}
	\label{sec:Introduction}

	Throughout the paper, $G := \SO(d+1,1)^\circ$ denotes the connected component of the orthogonal group of signature $(d+1,1)$ for $d \geq 1$, and $\G < G$ is a discrete subgroup. Recall that $G$ can be realized as the group of orientation-preserving isometries of the $(d + 1)$-dimensional hyperbolic space $\H^{d + 1}$.
	The discrete subgroup $\Gamma$ and the corresponding hyperbolic orbifold $\G\bsl\H^{d + 1}$ are said to be \emph{geometrically finite} if the $1$-neighborhood of the convex core of $\G\bsl\H^{d + 1}$---the smallest closed convex subset containing all closed geodesics in $\G\bsl\H^{d + 1}$---is of finite volume.
	The \emph{critical exponent} $\critexp \in [0, d]$ is a fundamental parameter associated to $\Gamma$ which is defined as the abscissa of convergence of the Poincar\'e series $s \mapsto \sum_{\gamma \in \Gamma} e^{-sd(o, \gamma o)}$ for any reference point $o \in \H^{d + 1}$. When $\Gamma$ is geometrically finite, the critical exponent coincides with the Hausdorff dimension of the limit set of $\Gamma$ in $\partial_\infty(\H^{d + 1})$---the set of limit points of any $\Gamma$ orbit in $\overline{\H^{d + 1}}$. Our primary object of study is the right regular representation of $G$ on the space $L^2(\GmodGamma)$ of $\C$-valued $L^2$ functions with respect to a fixed $G$-invariant measure.
	
	Recall that an irreducible unitary representation of $G$ is called \textit{spherical} if it admits a non-zero fixed vector by a maximal compact subgroup of $G$.
	In 1982, Lax--Phillips showed in  \cite{LaxPhillips} that, when $\G\bsl\H^{d + 1}$ is geometrically finite, the intersection of the $L^2$ spectrum of the Laplace--Beltrami operator with $[0,d^2/4)$ consists of a (possibly empty) finite set of eigenvalues, each occurring with finite multiplicity (so-called \textit{small eigenvalues} of the Laplace--Beltrami operator). Equivalently, since a spherical vector in a spherical complementary series representation is an eigenfunction of the Laplace--Beltrami operator with eigenvalue in $[0,d^2/4)$,
	at most finitely many \emph{spherical} complementary series can occur in $L^2(\GmodGamma)$ and with finite multiplicities.
	Moreover, the works of Patterson \cite{Patterson} in the case $d=1$, and Sullivan \cite{Sullivan-DensityAtInfinity} in general show that there exists a bottom eigenvalue below the continuous spectrum if and only if $\critexp > d/2$ (cf. works of Elstrodt \cite{ElstrodtI,ElstrodtII,ElstrodtIII}); thereby yielding the existence of a spectral gap in this case, often called the Lax--Phillips spectral gap.

	In \cite[Definition 1.1]{MohammadiOh}, Mohammadi--Oh formulated a more general \emph{strong spectral gap} which includes the \textit{non-spherical} complementary series as well as the spherical ones (cf. \cite{KelmerSarnak,Kelmer} for similar types of questions for compact groups). It is natural to extend their definition to the following definition to account for all the quasi-complementary series, which exhaust the non-tempered elements of the unitary dual of $G$; see \cref{thm: non-tempered irreps}. 
	To formulate this definition, denote by $\MhatSD$ the set of isomorphism classes of self-dual irreducible representations of $M \cong \SO(d)$. Then, the quasi-complementary series are parametrized by pairs $(\sigma,s)$ for $\sigma\in \MhatSD$ and $s\in \Ical_\sigma$, where $\Ical_\sigma \subset (d/2, d]$ is an explicit interval determined by $\sigma$; see \cref{subsec: Unitary representation theory of G} for precise definitions. The corresponding series will be denoted $\Ucal(\sigma, s)$.

	\begin{definition}[Strong Spectral Gap]\label{def:SSG}
		We say that $L^2(\GmodGamma)$ admits a \emph{strong spectral gap} if both of the following hold.
		\begin{enumerate}
			\item\label{itm: non-spherical complementary series at critexp} For all non-trivial $\sigma \in \MhatSD$, the quasi-complementary series $\mathcal{U}(\sigma, \critexp)$ is not contained in $L^2(\GmodGamma)$.
			\item\label{itm: gap for complementary series} There exists $\eta > 0$ such that for all $\sigma \in \MhatSD$ and $s \in (\critexp - \eta, \critexp)$, the quasi-complementary series $\mathcal{U}(\sigma, s)$ is not weakly contained in $L^2(\GmodGamma)$.
		\end{enumerate}
	\end{definition}
	
	\begin{remark}
		\label{rem:SSG(1) Justification}
		Note that for all $\sigma \in \MhatSD$ and $s > \critexp$, $\mathcal{U}(\sigma, s)$ is not weakly contained in $L^2(\GmodGamma)$; see \cref{prop: no quasi-complementary series above critexp}. Therefore, provided \cref{def:SSG}\cref{itm: gap for complementary series} holds, for any non-trivial $\sigma \in \MhatSD$, a weak containment of $\mathcal{U}(\sigma, \critexp)$ in $L^2(\GmodGamma)$ is automatically a strong containment, which justifies the seemingly stronger requirement of \cref{def:SSG}\cref{itm: non-spherical complementary series at critexp}.
	\end{remark}
	
	In fact, from the Lax--Phillips spectral gap combined with the observation that the non-spherical quasi-complementary series do not exist for parameters $s > d - 1$, it follows that if $\G < G$ is a Zariski-dense geometrically finite discrete subgroup with critical exponent $\critexp > \max\{d/2, d - 1\}$, then $L^2(\GmodGamma)$ admits a strong spectral gap (cf. \cite[Theorem 3.27]{MohammadiOh}).
	In \cite[Conjecture 1.2]{MohammadiOh}, Mohammadi and Oh conjectured that the optimal condition $\critexp > d/2$ suffices for $L^2(\GmodGamma)$ to admit a strong spectral gap for the complementary series, in arbitrary dimension $d + 1 \geq 2$.
	The following is the main result of this article, which, in particular, confirms their conjecture.

	\begin{thm}
		\label{thm:SSG conjecture}
		If $\G < G$ is a Zariski-dense, geometrically finite, discrete subgroup with $\critexp > d/2$, then $L^2(\GmodGamma)$ admits a strong spectral gap.
	\end{thm}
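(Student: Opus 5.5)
We will prove \cref{thm:SSG conjecture} by relating quasi-complementary series in $L^2(\GmodGamma)$ to $M$-equivariant spectral objects on the frame bundle, and then using the geometry of the Patterson--Sullivan/Bowen--Margulis--Sullivan theory. For $\sigma \in \MhatSD$ and $s \in \Ical_\sigma$, an embedding (or weak containment) of $\Ucal(\sigma,s)$ in $L^2(\GmodGamma)$ produces, via the realization of $\Ucal(\sigma,s)$ as a subquotient of the principal series $\Ind_{MAN}^G(\sigma\otimes e^{s}\otimes 1)$, a $\Gamma$-equivariant $\sigma$-valued conformal density of dimension $s$ on $\partial_\infty\H^{d+1}$ (for strong containment), respectively a sequence of almost-invariant vectors whose $K$-types are controlled by $\sigma$ (for weak containment). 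Equivalently, on the Lie algebra level, we get $\sigma$-isotypic eigen-sections of the Casimir operator acting on the bundle $\Gamma\bsl G\times_K \tau$ for a minimal $K$-type $\tau$ of $\Ucal(\sigma,s)$, with eigenvalue determined by $(\sigma,s)$. The plan is to show these cannot exist at $s=\critexp$ with $\sigma$ non-trivial, and that they are uniformly excluded for $s$ near $\critexp$.

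Step 1 (item \cref{itm: non-spherical complementary series at critexp}). Suppose $\Ucal(\sigma,\critexp)\subset L^2(\GmodGamma)$ for non-trivial $\sigma$. Taking the $N$-coefficients (a boundary-value/Poisson transform argument in the spirit of Sullivan), we obtain a nonzero $\Gamma$-equivariant $\sigma$-valued density $\nu$ of exponent $\critexp$, absolutely continuous with respect to the Patterson--Sullivan measure $\mu_{\mathrm{PS}}$ (uniqueness of the $\critexp$-conformal density for geometrically finite $\Gamma$ forces this). Writing $\nu = \phi\,\mu_{\mathrm{PS}}$ with $\phi$ an $M$-equivariant $\sigma$-valued function on $\Gamma\bsl G/N$-type data, one deduces that $\phi$ is invariant under $A$ and $N^{\pm}$ in the sense of being an $AN$-invariant $M$-equivariant function in $L^2(\BMS)$. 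Ergodicity of the frame flow with respect to $\BMS$ (Winter, using Zariski density) then forces $\phi$ to be $M$-invariant, hence $\sigma$ trivial, a contradiction.

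Step 2 (item \cref{itm: gap for complementary series}). Argue by contradiction: suppose there are $\sigma_n\in\MhatSD$ and $s_n\uparrow\critexp$ with $\Ucal(\sigma_n,s_n)$ weakly contained. Since $\Ical_\sigma\subset(d/2,d-1]$ for non-trivial $\sigma$ and only finitely many $\sigma$ matter once one uses that the $K$-type bounds degenerate for large $\sigma$ (the interval $\Ical_\sigma$ shrinks toward $d/2$ as $\sigma$ grows, so large $\sigma$ cannot reach $s$ near $\critexp>d/2$), we may pass to a subsequence with $\sigma_n=\sigma$ fixed. For fixed $\sigma$, weak containment of a continuous family converging to $\Ucal(\sigma,\critexp)$, together with the Lax--Phillips-type discreteness of the spectrum of the $\tau$-twisted Casimir below the essential spectrum threshold (obtained by a cusp/funnel decomposition: the funnels contribute only tempered spectrum and the cusps contribute spectrum with exponent at most the maximal cusp rank over $2$, which is $<\critexp$), shows the weakly contained series are actually point spectrum, finite in number. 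Hence there is a gap below $\critexp$; combined with \cref{prop: no quasi-complementary series above critexp} and Step 1 (and the simplicity of the base eigenvalue for trivial $\sigma$) this gives $\eta$.

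The main obstacle is Step 2's discreteness claim for non-spherical bundles: establishing that the $\tau$-twisted Laplacian on $\Gamma\bsl\H^{d+1}$ has only finitely many eigenvalues in the relevant non-tempered range with essential spectrum confined away from $\critexp$, uniformly in $\sigma$. I would attack it via a Lax--Phillips style compact perturbation: build a parametrix on the funnel and cusp ends using explicit model computations on $\Gamma_\infty\bsl\H^{d+1}$, and show the remainder is compact on the $\tau$-isotypic part, relying on $\critexp>d/2$ to dominate the cusp thresholds.
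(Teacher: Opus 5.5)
\textbf{The reduction in Step~2 to a single $\sigma$ is false.} This is exactly where the difficulty of the theorem lies. By definition $\Ical_\sigma=(d/2,d-\ell(\sigma)]$ depends only on $\ell(\sigma)\in\{0,1,\dots,\lfloor d/2\rfloor\}$, not on the size of $\sigma$. For example, every $\sigma=(n,0,\dots,0)$ with $n\geq 1$ has $\Ical_\sigma=(d/2,d-1]$. This interval contains $\critexp$ whenever $d/2<\critexp\leq d-1$, which is precisely the range ($d\geq 3$) not already covered by Lax--Phillips together with the fact that non-spherical series stop at $d-1$.

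So infinitely many $\sigma$ could a priori have $\Ucal(\sigma,s)$ weakly contained with $s$ arbitrarily close to $\critexp$, and you cannot pass to a subsequence with $\sigma_n$ fixed. A Lax--Phillips type discreteness result for each fixed $\tau$-twisted Laplacian (even granting the parametrix you leave open) only gives a gap $\eta_\sigma>0$ with no control on $\inf_\sigma\eta_\sigma$. As the paper remarks, such per-$\sigma$ finiteness is already known for convex cocompact $\Gamma$ (Dyatlov--Guillarmou) and does not yield a strong spectral gap.

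The missing idea is a source of uniformity in $\sigma$. In the paper it comes from exponential mixing of the frame flow:
\begin{enumerate}
\item \cref{cor:Laplace transform and exp mixing} gives a single strip $\Re(z)>-\eta_{\mrm{mix}}$, independent of the test functions, on which the Laplace transform of $e^{(d-\critexp)t}\langle\phi\circ a_t,\psi\rangle$ is meromorphic with at most a simple pole at $0$.
\item The Edwards--Oh expansion (\cref{prop:Laplace transform rep theory side}) writes the same transform as a holomorphic function plus the Stieltjes transform of the complex measure $\sum_{\sigma\subset\tau}\langle\mathsf{T}_\tau^\tau\mathsf{C}_+(s)\phi_{\sigma,s},\psi_{\sigma,s}\rangle\,dm_\sigma(s)$. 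This holds on a strip whose width depends only on $\critexp-d/2$, not on the $K$-type.
\item Take a minimal $K$-type $\tau\supset\sigma$ with $\mathsf{C}_+(\tau:\sigma;s)\neq 0$ for all $s>d/2$ (\cref{prop:nonvanishing}). Stieltjes inversion for complex measures (\cref{cor: complex measure zero}) then forces $m_\sigma$ to vanish on $(\critexp-\eta,\critexp)$, with the same $\eta$ for every $\sigma$.
\end{enumerate}

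\textbf{Step~1 also has an unjustified link.} A copy of $\Ucal(\sigma,\critexp)$ in $L^2(\GmodGamma)$ gives, via boundary values, a $\Gamma$-equivariant $V_\sigma$-valued \emph{distribution} on $\partial_\infty\H^{d+1}$. Nothing in your argument makes it a measure. In the spherical case Sullivan gets a measure from positivity of the ground state; for non-spherical series no such positivity is available, which is why the paper cannot use the Avila--Gou\"ezel--Yoccoz lower-bound argument. Uniqueness of the $\critexp$-conformal density is a statement about positive measures. You could apply it to the total variation of a vector-valued measure, but only once you have a measure.

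The paper avoids boundary values and compares leading asymptotics of matrix coefficients (\cref{prop:SSG conjecture part 1}):
\begin{enumerate}
\item It first isolates the atom at $\critexp$ using the gap from \cref{prop:SSG conjecture part 2}.
\item The Edwards--Oh main term $\langle\mathsf{T}_\tau^\tau\mathsf{C}_+(\critexp)\phi_{\sigma,\critexp},\psi_{\sigma,\critexp}\rangle$ must then equal the mixing main term $\int\phi\,d\BR\int\overline{\psi}\,d\BRstar$.
\item The mixing main term vanishes on the $M$-non-invariant part of $L^2(\GmodGamma)_\tau$, because the Burger--Roblin measures disintegrate along $M$-fibers as Haar measure.
\item For non-trivial $\sigma$, $\mathsf{T}_\tau^\tau$ also kills $M$-invariant vectors. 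Hence $\mathsf{T}_\tau^\tau\mathsf{C}_+(\critexp)$ would vanish on $\Ucal(\sigma,\critexp)_\tau$, contradicting \cref{cor:nonvanishing}.
\end{enumerate}
This is the rigorous replacement for your heuristic that the ``boundary data'' at exponent $\critexp$ must be the $M$-invariant Patterson--Sullivan data.
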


	\begin{remark}
		We make the following observations:
		\begin{enumerate}
			\item According to the above discussion, \cref{thm:SSG conjecture} is new for $d \geq 3$.
			\item Restricting to the spherical complementary series, we obtain a different proof of the Lax--Phillips spectral gap; cf.\ \cref{subsec:RelationToPriorWorks}. 
			We also obtain a different proof of the aforementioned fact, due to Patterson and Sullivan, that the bottom eigenvalue of the $L^2$ spectrum of the Laplace--Beltrami operator is simple and given by $\critexp(d - \critexp)$; cf.\ \cref{prop:SSG conjecture part 1}.

			\item     The size of the strong spectral gap $\eta$ in \cref{def:SSG}\cref{itm: gap for complementary series} provided by \cref{thm:SSG conjecture} remains uniform over any family of subgroups $\G<G$ over which the rate of exponential mixing of the frame flows is uniform.
			In particular, such uniformity is known to hold for certain families of congruence subgroups of a fixed Zariski-dense convex cocompact subgroup $\G$ of an arithmetic group of $G$ by work of Oh--Winter for $d=1$ \cite{OhWinter-Congruence}, and by the third author for general $d\geq 1$  \cite{Sarkar-Congruence}.
			These results will be extended to the case $\G$ is geometrically finite in forthcoming work of the third author.

		\end{enumerate}
	\end{remark}

	Having established \cref{thm:SSG conjecture} on the existence of a strong spectral gap, we make the following definition.
	\begin{definition}[Strong Spectral Gap Parameter]
		\label{def:spec gap parameter}
		The \textit{strong spectral gap parameter} of $\Gamma$, denoted by $\k_\Gamma$, is the supremum over all $\eta \in (0, \critexp -d/2]$ for which \cref{def:SSG}\cref{itm: gap for complementary series} holds.
	\end{definition}

	\cref{thm:SSG conjecture} yields the following two results on rates of decay of matrix coefficients, and of exponential mixing of the frame flow. This extends the results of Edwards--Oh \cite{EdwardsOh}, who proved exponential mixing for the geodesic flow without relying on the existence of a strong spectral gap. For a brief survey of matrix coefficients, mixing, and related problems in homogeneous dynamics, we refer the reader to the article of Oh \cite[\S 7--8]{Oh-ICM} which will appear in the Proceedings of the ICM 2026.

	Denote by $\Haar$, $\BMS$, $\BR$, and $\BRstar$, compatibly chosen $G$-invariant, Bowen--Margulis--Sullivan, and Burger--Roblin measures; see \cref{sec:exp mixing} and references therein. We also denote by $\Sob^m_K(\GmodGamma)$ the $L^2$-Sobolev space of order $m > 0$ defined using derivatives along a fixed maximal compact subgroup $K < G$; see \cref{sec:Sobolev} for a precise definition.

	\begin{thm}
		\label{thm:upgraded decay of correlations for Haar}
		Let $\G < G$ be a Zariski-dense, geometrically finite, discrete subgroup with $\critexp > d/2$. Let $\k_0:=\min\{\k_\Gamma,1\}$, where $\k_\Gamma$ is the strong spectral gap parameter of $\Gamma$. There exists $m > d(d + 1)/2$ such that for all $\epsilon > 0$, and $\phi, \psi \in \Sob_K^m(\GmodGamma)$, and $t > 0$, we have
		\begin{multline*}
			e^{(d - \critexp)t}\int_{\GmodGamma} (\phi \circ a_t) \cdot \psi \, d\Haar =  \int_{\GmodGamma} \phi \, d\BR \int_{\GmodGamma} \psi \, d\BRstar
			+ O_\epsilon\bigl(e^{-(\k_0 - \epsilon)t}\|\phi\|_{\Sob^m_K(\GmodGamma)} \|\psi\|_{\Sob^m_K(\GmodGamma)}\bigr).
		\end{multline*}
	\end{thm}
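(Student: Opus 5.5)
We derive \cref{thm:upgraded decay of correlations for Haar} from the spectral decomposition of $L^2(\GmodGamma)$, with \cref{thm:SSG conjecture} supplying the gap. First write $\LtwoGmodGamma := L^2(\GmodGamma) = \Hcal_{\critexp} \oplus \Hcal'$. Here $\Hcal_{\critexp}$ is the (finite-dimensional-multiplicity) isotypic component of the spherical complementary series $\Ucal(\triv,\critexp)$. By the Patterson--Sullivan result recalled above (reproved in \cref{prop:SSG conjecture part 1}), it occurs with multiplicity one, spanned by the $K$-invariant vector $\phi_0$ given by the base eigenfunction of eigenvalue $\critexp(d-\critexp)$. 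By \cref{thm:SSG conjecture}, \cref{def:SSG}\cref{itm: non-spherical complementary series at critexp}, and \cref{prop: no quasi-complementary series above critexp}, the complement $\Hcal'$ weakly contains only tempered representations and quasi-complementary series $\Ucal(\sigma,s)$ with $s \le \critexp - \k_\Gamma$.

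Next, decompose $\phi,\psi$ into $K$-types, $\phi = \sum_{\tau\in\widehat K}\phi_\tau$. For each irreducible $\Ucal(\sigma,s)$ we use explicit asymptotics of $K$-finite matrix coefficients along $A$. Via Harish-Chandra's expansion, or via the realization as induced representations, one has
$$|\langle a_t v,w\rangle| \ll \dim(\tau)^{c}\dim(\tau')^{c}\,(1+t)\,e^{-(d-s)t}\|v\|\|w\|$$
for $v,w$ of $K$-types $\tau,\tau'$. Tempered representations give the bound with $s=d/2$. Integrating over the direct integral, the $\Hcal'$ contribution is $O_\epsilon(e^{-(d-\critexp+\k_0-\epsilon)t})$, with polynomial losses in $\tau$. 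These losses are absorbed by the Sobolev norms $\Sob^m_K$, since $\sum_\tau \dim(\tau)^{c'}\|\phi_\tau\| \ll \|\phi\|_{\Sob^m_K}$ for $m$ large (this is where $m>d(d+1)/2=\dim$-type thresholds enter).

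It remains to treat the $\Hcal_{\critexp}$ contribution, which is the main term. Only the $K$-invariant part contributes, to leading order: $\langle a_t \phi, \psi\rangle$ restricted to $\Hcal_{\critexp}$ equals $\langle\phi,\phi_0\rangle\overline{\langle\psi,\phi_0\rangle}$ times the spherical function $\Phi_{\critexp}(a_t)$ on $\Ucal(\triv,\critexp)$ for $K$-invariant data. More generally it is a matrix coefficient of $K$-finite vectors of $\Ucal(\triv,\critexp)$. The expansion $\Phi_{\critexp}(a_t) = c\,e^{-(d-\critexp)t} + O(e^{-(d-\critexp+\min\{1,2\critexp-d\})t})$ gives the main term. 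Its second exponent is $(\critexp)$ versus $(d-\critexp)$ in the two leading exponentials, plus the integer-step corrections, which is where $\min\{\k_\Gamma,1\}$ arises; note $\k_\Gamma\le\critexp-d/2$. For general $K$-finite vectors the leading coefficient factors as a product of linear functionals in $\phi$ and $\psi$, namely the boundary values of the induced-model vectors. One identifies these functionals with $\phi\mapsto\int\phi\,d\BR$ and $\psi\mapsto\int\psi\,d\BRstar$ with the stated normalization. This holds either by comparison with the known mixing result of Edwards--Oh and its Burger--Roblin form (both sides are $A$-asymptotics, so the coefficients must agree), or directly, since $\BR$ is built from $\phi_0$'s Patterson--Sullivan density.

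The main obstacle is the uniform control of $K$-finite matrix coefficients across the whole unitary dual, uniformly in $\sigma$ and in the $K$-types with only polynomial dependence, including near the endpoints of $\Ical_\sigma$. I would handle this with the compact picture of $\Ind_P^G(\sigma\otimes e^{s})$, bounding coefficients by the Harish-Chandra $\Xi$-type function $\int_K e^{-(d-s)H(a_tk)}dk$, and then counting $K$-derivatives.
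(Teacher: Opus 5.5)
Your proof is correct in outline and follows the paper's architecture. Split $L^2(\GmodGamma)$ into the multiplicity-one summand $\Ucal(\triv,\critexp)$ and a complement whose spectrum lies in the tempered part and in $s\le\critexp-\k_\Gamma$. Then bound the complement, expand the main summand with error $e^{-\eta_{\critexp}t}$ (where $\eta_{\critexp}\ge\k_0$), and identify the leading coefficient with $\int\phi\,d\BR\int\overline{\psi}\,d\BRstar$ by letting $t\to+\infty$ against \cref{thm:decay of correlations for Haar}.

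Where you genuinely differ is the step you call the main obstacle. The paper never needs uniform control across the unitary dual. \cref{prop:MatrixDecay} uses Cowling--Haagerup--Howe on the tempered part and Shalom's strong-$L^p$ characterization on the non-tempered part. This upgrades merely non-uniform decay of each irreducible $\Ucal(\sigma,s)$, $s\le s_1$, to the bound $(1+t)e^{-(d-s_1)t}(\dim\langle\pi(K)u\rangle\dim\langle\pi(K)v\rangle)^{1/2}\|u\|\|v\|$ on the whole complement at once. The Sobolev-uniform expansion on $\Ucal(\triv,\critexp)$ is simply quoted from Edwards--Oh (\cref{thm:matrix coeff expand 2}).

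Your hands-on compact-picture route can also work, but your sketch omits one point. Converting $L^2(K)$-norms to the unitary norm $\langle\mathcal{A}(\sigma,s)\cdot,\cdot\rangle_{L^2(K)}$ costs a factor $\sqrt{a(\sigma,s,\tau_2)/a(\sigma,s,\tau_1)}$. This factor is unbounded as $s$ approaches the end $d-\ell(\sigma)$ of $\Ical_\sigma$, because some $a(\sigma,s,\tau)\to0$ there. You must also use $\langle\pi(a_t)u,v\rangle=\overline{\langle\pi(a_{-t})v,u\rangle}$ and $\varphi_s(a_{-t})=\varphi_s(a_t)$ to get the reciprocal ratio, and then take the minimum.

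In your favor, your explicit summation over $K$-types, with polynomial losses in $\dim(\tau)$ absorbed into $\Sob^m_K$, is what actually produces the Sobolev-norm dependence in the statement. The paper leaves this implicit in its constant $\ll_{\phi_1,\psi_1}$.

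Two minor corrections. First, ``only the $K$-invariant part contributes to leading order'' is false when $\critexp<d$. The leading coefficient sees the $M$-invariant component of every $K$-type of $\Ucal(\triv,\critexp)$, as it must, since $\BR$ is then $M$-invariant but not $K$-invariant. Your subsequent factorization statement is the correct one. Second, if you compare with Edwards--Oh's geodesic-flow asymptotics rather than the frame-flow ones, you first need to observe that both leading terms depend only on the $M$-invariant projections of $\phi$ and $\psi$.
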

	
	\begin{remark}
		\cref{thm:upgraded decay of correlations for Haar} strengthens the results of \cite{SarkarWinter,LiPanSarkar} when $\critexp > d/2$ in the following three aspects which are important for applications \cite{MohammadiOh,KelmerOh}:
		\begin{itemize}
			\item the exponential rate $\k_0$ of the error term is explicitly determined by the strong spectral gap parameter $\kappa_\Gamma$;
			\item the implicit constant does not depend on the supports of the test functions, in particular allowing non-compactly supported functions;
			\item regularity control on the test functions is weakened to only involve the Sobolev norm $\Sob^m_K$.
		\end{itemize}
		We note however that the results of \cite{SarkarWinter,LiPanSarkar} are used as input in our proof of \cref{thm:SSG conjecture}. In particular, \cref{thm:upgraded decay of correlations for Haar} does not give a different proof of these results.
	\end{remark}

	The following theorem follows from \cref{thm:upgraded decay of correlations for Haar} using the same proof as in \cite{MohammadiOh,OhWinter-Congruence,KelmerOh} which provide effective versions of Roblin's transverse intersection argument \cite{Roblin}.  Indeed, the explicit formula for the exponential rate is obtained from a generalization of \cite[Theorem 5.8]{OhWinter-Congruence} and its proof.
	
	\begin{thm}
		Let $\G < G$ be a Zariski-dense, geometrically finite, discrete subgroup with $\critexp > d/2$. Let $\k_1 := \frac{\k_0}{2(d + 3 + \k_0)}$, where $\k_0$ is the constant from \cref{thm:upgraded decay of correlations for Haar}.
		There exists $m > d(d + 1)/2$ such that for all $\phi, \psi \in C^m(\GmodGamma)$ and $t > 0$, we have
		\begin{align*}
			\int_{\GmodGamma} (\phi \circ a_t) \cdot \psi \, d\BMS = \int_{\GmodGamma} \phi \, d\BMS \int_{\GmodGamma} \psi \, d\BMS + O\bigl(e^{-\kappa_1 t}\|\phi\|_{C^m(\GmodGamma)} \|\psi\|_{C^m(\GmodGamma)}\bigr).
		\end{align*}
	\end{thm}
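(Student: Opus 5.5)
The plan is to follow the effective version of Roblin's transverse intersection argument, as in \cite{MohammadiOh,OhWinter-Congruence,KelmerOh}, taking \cref{thm:upgraded decay of correlations for Haar} as the only dynamical input. We first reduce to a local statement. Using a smooth partition of unity subordinate to a cover of the $1$-neighborhood of the convex core by small flow boxes, we may assume $\phi$ and $\psi$ are supported in $\epsilon$-boxes around points $x_0$ and $y_0$ of $\supp \BMS$. Since $\BMS$ is finite and the relevant part of $\GmodGamma$ is the (possibly noncompact, cusped) convex core, the cusp excursions need separate treatment. Here we use the standard estimate, via Stratmann--Velani/Schapira type shadow lemmas, that $\BMS$ of the part of a cusp at depth $R$ decays like $e^{-(2\critexp - k)R}$ with $2\critexp - k > 0$. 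This lets us truncate at depth $R \asymp t$ at a cost of an exponentially small error. Only boundedly many boxes are needed after rescaling, so the count is polynomial in $\epsilon^{-1}$.

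Within a box we use local product coordinates $g = n^- a_s m\, n^+$ adapted to the stable and unstable horospherical subgroups $N^\mp$. The measure $\BMS$ disintegrates as Patterson--Sullivan conformal densities $\mu^{\mathrm{PS}}$ on $N^+$ and $N^-$, times $ds$, times Haar on $M$. In contrast, $\Haar$ uses Lebesgue measure in both directions, while $\BR$ and $\BRstar$ use PS in one direction and Lebesgue in the other. The key step is a thickening argument. For $\psi$ localized in $N^+$ near a point, we replace the PS-measure in the unstable direction by Lebesgue measure. To do this, we convolve with a bump function $\rho_\epsilon$ on $N^-$-transversals and test against $\Haar$. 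Expanding $a_t$ stretches $N^+$ by $e^{t}$, and the $N^+$-averages of $\phi\circ a_t$ against Lebesgue measure equidistribute. This equidistribution follows from \cref{thm:upgraded decay of correlations for Haar} applied to $\phi$ and a bump $\psi_\epsilon$, with factor $e^{(d-\critexp)t}$ matching the normalization, and it gives that the Lebesgue-horospherical averages converge to $\BR$-integrals with error $O(e^{-(\k_0-\epsilon)t}\epsilon^{-m'})$. We then reverse the roles of stable and unstable directions via the $\BRstar$ side. This converts a $\BMS$ correlation of $\phi,\psi$ into a $\Haar$ correlation of $\epsilon$-thickenings, up to errors of size $O(\epsilon\|\phi\|_{C^1}\|\psi\|_{C^1})$ coming from Lipschitz continuity along the thickened directions. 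Here we also use that the PS-measure of $\epsilon$-balls centered on the limit set is comparable to $\epsilon^{\critexp}$ up to cusp corrections, together with the relevant doubling property.

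Combining these estimates, each local correlation equals $\int\phi\,d\BMS\int\psi\,d\BMS$ up to an error of
\[
O\bigl(\epsilon + \epsilon^{-(d+3)}\,\epsilon^{-2m''}e^{-(\k_0-\epsilon')t}\bigr)\|\phi\|_{C^m}\|\psi\|_{C^m}.
\]
The $\epsilon^{-(d+3)}$-type loss comes from normalizing the bumps in the $\dim N^- + \dim A + \dots$ directions, combined with the Sobolev norm of the bumps. Optimizing by setting $\epsilon = e^{-\k_0 t/(2(d+3+\k_0))}$, in the manner of \cite[Theorem 5.8]{OhWinter-Congruence}, yields the stated rate $\k_1$. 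The Sobolev-versus-$C^m$ passage is harmless, since $\Sob^m_K$ norms of the smooth compactly supported thickenings are controlled by $C^m$ norms.

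The main obstacle is the bookkeeping of the thickening: controlling PS-measures of small boxes uniformly near cusps, where the PS-density is not Ahlfors regular, so that the loss is exactly the polynomial factor in $\epsilon^{-1}$ producing the exponent $d+3$. For this we would first try the Stratmann--Velani global measure formula, combined with the cusp truncation above, to keep the constants uniform.
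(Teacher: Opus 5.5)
Your overall route is the one the paper intends. The paper gives no separate argument for this theorem: it obtains it from \cref{thm:upgraded decay of correlations for Haar} via the effective Roblin transverse-intersection argument of \cite{MohammadiOh,OhWinter-Congruence,KelmerOh}, and reads off the exponent from a generalization of \cite[Theorem 5.8]{OhWinter-Congruence}. The trouble is in the details you supply where the statement has real content, namely the explicit exponent $\kappa_1$ and uniformity for non-compactly supported $C^m$ functions; as written they do not work.

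First, the exponent. Balancing the two terms of your error $O\bigl(\epsilon+\epsilon^{-(d+3)-2m''}e^{-(\kappa_0-\epsilon')t}\bigr)$ gives $\epsilon=e^{-\kappa_0 t/(d+4+2m'')}$, not $e^{-\kappa_1 t}$. With your choice $\epsilon=e^{-\kappa_1 t}$, the second term is $O(e^{-\kappa_1 t})$ only if, roughly, $2m''\le d+2+2\kappa_0$. Suppose the bumps you feed into \cref{thm:upgraded decay of correlations for Haar} are $\epsilon$-localized in directions that $\mathfrak{k}$-derivatives see. Then each derivative costs $\epsilon^{-1}$, so $m''$ is at least the Sobolev index $m>d(d+1)/2$, and the condition fails for every $d\ge 2$. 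So $\kappa_1$, which involves neither $m$ nor an $\epsilon'$-loss, is asserted rather than derived. Your remark that passing from $\Sob^m_K$ to $C^m$ is ``harmless'' is exactly the step that needs an argument.

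Second, the cusps. An $\epsilon$-flow box embeds in $\GmodGamma$ only where the injectivity radius exceeds $\epsilon$, i.e.\ up to cusp depth about $\log(1/\epsilon)=\kappa_1 t$. Truncating there leaves $\BMS$-mass $\asymp e^{-(2\critexp-k)\kappa_1 t}$, which exceeds $e^{-\kappa_1 t}$ whenever $2\critexp-k<1$ (e.g.\ any non-lattice $\Gamma$ with a cusp when $d=1$). Truncating at depth $R\asymp t$ instead forces boxes of size $e^{-R}$, and you have not tracked their normalization and Sobolev costs.

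Both issues disappear if you never localize at scale $\epsilon$. Each PS-to-Lebesgue replacement is along leaves that the relevant flow contracts: $N$ under $a_t$ on the side of $\psi$, and $\overline{N}$ under $a_{-t}$ on the side of $\phi$. So fixed unit-scale bumps $\beta$ on $N$ and $\beta'$ on $\overline{N}$, with $\int\beta=\int\beta'=1$, suffice. Put $\Psi(x)=\int_N\psi(xn)\beta(n)\,d\mu^{\mathrm{PS}}_{xN}(xn)$ and $\Phi(y)=\int_{\overline{N}}\phi(y\bar n)\beta'(\bar n)\,d\mu^{\mathrm{PS}}_{y\overline{N}}(y\bar n)$, using leafwise Patterson--Sullivan measures. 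The measures $\BMS$ and $\BRstar$ (resp.\ $\BR$) share the transverse measure of the $N$- (resp.\ $\overline{N}$-) foliation and differ only leafwise. Fubini along leaves, together with $\BRstar(Ea_t)=e^{-(d-\critexp)t}\BRstar(E)$, then gives $\int(\phi\circ a_t)\psi\,d\BMS=e^{(d-\critexp)t}\int(\Phi\circ a_t)\Psi\,d\Haar+O(e^{-t}\|\phi\|_{C^1}\|\psi\|_{C^1})$. The same argument gives $\int\Phi\,d\BR=\int\phi\,d\BMS$ and $\int\Psi\,d\BRstar=\int\psi\,d\BMS$, up to the compatible normalizations. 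The functions $\Phi,\Psi$ are smooth because leafwise PS measures vary smoothly with the leaf. Their $\Sob^m_K$-norms are $\ll\|\phi\|_{C^m}$ and $\ll\|\psi\|_{C^m}$; square-integrability in funnels uses $\critexp>d/2$, and in rank-$k$ cusps uses $2\critexp>k$. Applying \cref{thm:upgraded decay of correlations for Haar} directly to $\Phi,\Psi$, whose constant does not depend on supports, gives the estimate with exponent $\kappa_0-\epsilon'$ for any $\epsilon'>0$, and in particular with $\kappa_1$.
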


	\cref{thm:SSG conjecture} also gives the following immediate corollary.
	Recalling that the Casimir operator acts on the quasi-complementary series $\Ucal(\sigma,s)$ with the scalar $s(d-s)$ and that its action on $K$-invariant functions coincides with that of the Laplace--Beltrami operator, we have the following:
	
	\begin{cor}
		\label{cor:SSG_Casimir}
		The bottom eigenvalue $\critexp(d-\critexp)$ of the Laplace--Beltrami operator on $L^2(\HSmodGamma)$ is isolated from the rest of the spectrum of the Casimir operator on $L^2(\GmodGamma)$.
	\end{cor}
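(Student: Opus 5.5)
The plan is to deduce \cref{cor:SSG_Casimir} directly from \cref{thm:SSG conjecture} through the direct integral decomposition of the right regular representation on $L^2(\GmodGamma)$. Write $L^2(\GmodGamma) \cong \int^\oplus_{\Ghat} \pi^{\oplus m(\pi)}\, d\mu(\pi)$. The Casimir operator $\mathcal{C}$ acts on each irreducible $\pi$ by a scalar $\chi(\pi)$, so its spectrum on $L^2(\GmodGamma)$ is the essential range of $\chi$ with respect to $\mu$. Equivalently, it is the closure of $\{\chi(\pi) : \pi \text{ weakly contained in } L^2(\GmodGamma)\}$, because the support of $\mu$ is exactly the set of weakly contained irreducibles and $\chi$ is continuous on $\Ghat$ for the Fell topology, at least on the relevant families. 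It therefore suffices to show that $\critexp(d-\critexp)$ is an isolated point of this set of scalars, and that the only representation producing it is the spherical $\Ucal(\triv,\critexp)$ with finite multiplicity.

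First I will split $\Ghat$ into tempered and non-tempered parts. For tempered $\pi$, I will use the classification of the unitary dual (\cref{thm: non-tempered irreps} together with the standard Casimir eigenvalues of principal and discrete series) to get $\chi(\pi) \ge d^2/4$, using the normalization in which $\Ucal(\sigma,s)$ has eigenvalue $s(d-s)$. Since $\critexp > d/2$, we have $\critexp(d-\critexp) < d^2/4$, so the tempered spectrum is bounded away from it. The non-tempered irreducibles are exactly the quasi-complementary series $\Ucal(\sigma,s)$ with $s \in \Ical_\sigma \subset (d/2,d]$, with eigenvalue $s(d-s)$. This function is strictly decreasing in $s$ on $(d/2,d]$.

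Now apply \cref{prop: no quasi-complementary series above critexp} and \cref{thm:SSG conjecture}. No $\Ucal(\sigma,s)$ with $s>\critexp$ is weakly contained. None with $s\in(\critexp-\eta,\critexp)$ is weakly contained, for any $\sigma$. For nontrivial $\sigma$, $\Ucal(\sigma,\critexp)$ is not contained, and by \cref{rem:SSG(1) Justification} it is not even weakly contained. Hence every weakly contained non-tempered representation other than $\Ucal(\triv,\critexp)$ has parameter $s\le \critexp-\eta$. Its Casimir eigenvalue is then at least $(\critexp-\eta)(d-\critexp+\eta)$, which is strictly larger than $\critexp(d-\critexp)$. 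Combining this with the tempered bound, the spectrum of $\mathcal{C}$ other than $\critexp(d-\critexp)$ lies in $[\min\{(\critexp-\eta)(d-\critexp+\eta), d^2/4\},\infty)$, so the bottom eigenvalue is isolated.

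Finally, the bottom eigenvalue is attained. By Patterson--Sullivan, equivalently the spherical part of \cref{prop:SSG conjecture part 1}, there is a $K$-invariant eigenfunction with Laplace eigenvalue $\critexp(d-\critexp)$, and it generates a copy of $\Ucal(\triv,\critexp)$. The only potential subtlety is making sure the weak-containment description of the spectrum is used correctly: a point isolated in the closure of the eigenvalue set corresponds to an atom of $\mu$. This is handled because only $\Ucal(\triv,\critexp)$ has that eigenvalue among weakly contained irreducibles, and by \cref{prop:SSG conjecture part 1} it appears with multiplicity one. There is no real obstacle here; all the difficulty has been absorbed into \cref{thm:SSG conjecture}.
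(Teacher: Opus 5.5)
Your route is the paper's own: read the spectrum off the direct integral, then use \cref{prop: no quasi-complementary series above critexp} and \cref{thm:SSG conjecture} to push every weakly contained quasi-complementary series other than $\Ucal(\triv,\critexp)$ down to $s\le\critexp-\eta$.

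The step that fails is your bound $\chi(\pi)\ge d^2/4$ for all tempered $\pi$. For $d$ odd, $G$ has discrete series, which are tempered but have Casimir eigenvalues below $d^2/4$. Already for $d=1$, where $G\cong\mathrm{PSL}_2(\R)$, the weight-$2k$ discrete series acts by $k(1-k)\le 0$. The paper handles exactly this in the remark after the corollary: these eigenvalues form a discrete set, so they cannot accumulate at $\critexp(d-\critexp)$, though they may coincide with it. For $d=1$, where $M$ is trivial, this is the only repair needed.

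Consequently, your closing claim that $\Ucal(\triv,\critexp)$ is the only weakly contained irreducible with this eigenvalue is false in general. For a cocompact torsion-free $\Gamma<\mathrm{PSL}_2(\R)$ we have $\critexp=1$, and the weight-$2$ discrete series occurs in $L^2(\GmodGamma)$ with eigenvalue $0=\critexp(d-\critexp)$. That claim is not needed for isolation anyway.

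For $d\ge 2$ there is a more serious problem, which the paper's one-line deduction shares: the Casimir eigenvalue depends on $\sigma$. Compute it via the infinitesimal character of $\Ind_P^G(\sigma\otimes\chi_{s-d/2}\otimes\triv)$, with the Casimir normalized so that $\Ucal(\triv,s)$ acts by $s(d-s)$. It then acts by $s(d-s)-c(\sigma)$. Here $c(\sigma)=\langle\mu_\sigma,\mu_\sigma+2\rho_M\rangle\ge 0$ is the $M$-Casimir eigenvalue of $\sigma$, with $\mu_\sigma$ the highest weight of $\sigma$ and $\rho_M$ the half-sum of positive roots of $M$. For instance $c(\wedge^p\C^d)=p(d-p)$. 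This is why coclosed $p$-forms on $\H^{d+1}$, for $p\le d/2$, have spectrum starting at $(d/2-p)^2$ rather than $d^2/4$.

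Hence tempered principal series with nontrivial $\sigma$ have eigenvalues $d^2/4+r^2-c(\sigma)$, which are unbounded below as $\sigma$ varies. Your lower bound $(\critexp-\eta)(d-\critexp+\eta)$ also fails for non-spherical $\Ucal(\sigma,s)$.

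This cannot be repaired for the full Casimir. If $\Gamma$ has infinite covolume, arbitrarily large balls embed in $\HSmodGamma$. So $L^2(G)$, and hence every tempered principal series, is weakly contained in $L^2(\GmodGamma)$, and the spectrum of the Casimir is all of $\R$.

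What your argument does establish is the spherical statement. On $K$-invariant vectors only $\sigma=\triv$ contributes, and spherical tempered representations have eigenvalue $d^2/4+r^2$. So $\critexp(d-\critexp)$ is isolated in the spectrum of the Laplace--Beltrami operator on $L^2(\HSmodGamma)$, which is the Lax--Phillips gap. On all of $L^2(\GmodGamma)$, the meaningful conclusion is \cref{thm:SSG conjecture} itself, phrased in terms of the parameter $s$ rather than Casimir eigenvalues.
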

	
	\begin{remark}
		Note that the eigenvalue $\critexp(d-\critexp)$ of the Casimir operator may not be simple when $d$ is odd due to the presence of (tempered) discrete series representations for which the Casimir eigenvalue could be the same as that of a (non-tempered) quasi-complementary series representation, and these could also appear in $L^2(\GmodGamma)$. We note, however, that since the Casimir eigenvalues corresponding to discrete series form a discrete set there is no danger of these accumulating near $\critexp(d-\critexp)$.
		
	\end{remark}

	\subsection{Organization of the paper and outline of the proof}
	\label{sec:outline}

	Our strategy is based on the study of the Laplace transform of scaled matrix coefficients of test functions $\phi,\psi\in L^{2}(\GmodGamma)$ defined as follows:
	\begin{align}
		F(z) = \int_0^{+\infty} e^{-(z+\critexp- d)t} \langle \phi \circ a_t, \psi \rangle_{L^{2}(\GmodGamma)} \,dt \qquad \text{for all $z \in \C$ with $\Re(z) > 0$}.
	\end{align}

	After recalling the necessary representation-theoretic background in \cref{sec: Representation theoretic preliminaries}, we recall in \cref{sec: Rep theoretic asymptotic expansion of matrix coefficients} the work of Edwards and Oh \cite{EdwardsOh} on asymptotic expansions of matrix coefficients of complementary series representations, which we use to prove \cref{prop:Laplace transform rep theory side} providing precise information on  holomorphic continuations of Laplace transforms of such matrix coefficients.

	The main terms in the asymptotic expansions of matrix coefficients are expressed in terms of Harish-Chandra functions; \cref{def:HC C function}.
	In \cref{sec:nonvanishing}, we recall explicit formulas for the Harish-Chandra functions from \cite{EguchiKoizumiMamiuda}, which allow us to produce prescribed $K$-finite vectors for which the main terms in the Edwards--Oh expansions are non-vanishing; \cref{cor:nonvanishing}.
	We use such non-vanishing statements to provide obstructions of holomorphic extensions of the Laplace transform of matrix coefficients to regions of the real line parametrizing the quasi-complementary series that weakly occur in $L^2(\GmodGamma)$.
	
	In \cref{sec:exp mixing}, we recall the works of Winter and the third author \cite{SarkarWinter}, and Li, Pan, and the third author \cite{LiPanSarkar} on exponential mixing of the frame flow with respect to the Bowen--Margulis--Sullivan measure.
	Through Roblin's transverse intersection argument, this result provides a power-saving error term in the asymptotic formula for scaled matrix coefficients; \cref{thm:decay of correlations for Haar}.
	This result immediately implies a meromorphic continuation of the Laplace transform to a strip to the left of the imaginary axis, with at most one simple pole at the origin; \cref{cor:Laplace transform and exp mixing}.

	In \cref{sec:Laplace comparison}, we prove \cref{thm:SSG conjecture} by comparing the Laplace transform $F(z)$ computed in two different ways as above; namely using representation theory in \cref{prop:Laplace transform rep theory side} and dynamics in \cref{cor:Laplace transform and exp mixing}.
	To describe the idea, recall that $L^{2}(\GmodGamma)$ admits a direct integral decomposition over the unitary dual of $G$ against certain Borel measures, which we refer to as \textit{spectral measures}; cf.~\cref{sec: Representation theoretic preliminaries}.
	As a consequence, the main terms in Edwards--Oh's asymptotic expansions are given by suitable integrals against such measures.
	
	The key idea, due to Avila--Gou\"ezel \cite[\S 3]{AvilaGouezel}, is to interpret the representation-theoretic formula provided by \cref{prop:Laplace transform rep theory side} as a \emph{Stieltjes transform} (recalled in \cref{prop:Stieltjes inversion}) of a certain absolutely continuous measure with respect to the spectral measures. A notable feature of our setting is that the Radon--Nikodym derivatives of the measures in question are complex-valued in general, whereas in the case of $\mrm{SL}_2(\R)$-representations studied in \cite{AvilaGouezel}, such densities are real-valued and non-negative.
	One then combines the holomorphic continuation provided by dynamics in \cref{cor:Laplace transform and exp mixing}, with an extension of the Stieltjes inversion formula to complex-valued measures in \cref{cor: complex measure zero}, to get vanishing of the spectral measures on an open interval, with right endpoint the bottom of the spectrum of $L^2(\GmodGamma)$. This argument, carried out in \cref{prop:SSG conjecture part 2}, will verify \cref{def:SSG}\cref{itm: gap for complementary series}.
	
	Simplicity of the bottom of the spectrum stated in \cref{def:SSG}\cref{itm: non-spherical complementary series at critexp} follows by comparing residues of the Laplace transforms together with the observation that the main term on the dynamical side is a bilinear form of rank $1$ (being a product of integrals against Burger--Roblin measures). This argument is carried out in \cref{prop:SSG conjecture part 1}.

	\subsection{Relation to prior works}
	\label{subsec:RelationToPriorWorks}
	The overall strategy of combining exponential mixing with representation theory to obtain a spectral gap was employed before in the work of Avila, Gou\"ezel, and Yoccoz in the setting of $\mrm{SL}_2(\R)$ representations \cite[Appendix B]{AvilaGouzelYoccoz}, where the argument is attributed to Anantharaman, Bufetov, and Forni.
	Here, the idea is to consider \textit{spherical vectors}, i.e., those invariant by a maximal compact subgroup, in a given spherical complementary series where it is known that matrix coefficients are non-negative, and in fact are bounded \textit{below} up to uniform constants by $e^{-\l t}$, for a suitable $\l\geq 0$ depending on the complementary series in question.
	Together with upper bounds on matrix coefficients coming from exponential mixing, this quickly gives the desired spectral gap.

	The above argument in fact readily extends to higher dimensions to give a different proof of Lax--Phillips's \textit{spherical} spectral gap using exponential mixing of the \textit{geodesic flow}.
	To our knowledge, there are no analogs of such positivity and lower bounds for matrix coefficients for the non-spherical quasi-complementary series (i.e., those that do not admit non-zero spherical vectors).
	In our setting, the work of Edwards--Oh, together with the non-vanishing results in \cref{sec:nonvanishing}, substitute for the lack of such lower bounds.
	The lack of positivity introduces further subtleties, which are treated somewhat indirectly using complex analysis in the form of Stieltjes transforms as discussed above.
	
	Finally, we recall the work of Dyatlov and Guillarmou \cite{DyatlovGuillarmou-long,DyatlovGuillarmou-short} on meromorphic continuations of Laplace transforms of matrix coefficients of sections of vector bundles to the entire complex plane in the setting of Axiom A flows, which includes geodesic flows on convex cocompact manifolds.
	At least for convex cocompact subgroups,\footnote{
		A similar result was obtained for scalar-valued functions on convex cocompact quotients of negatively curved symmetric spaces by different methods in \cite{Khalil-MixingI}, which additionally yield meromorphic continuation for geometrically finite groups to an explicit strip to the left of the imaginary axis determined by $\critexp$ and the ranks of the cusps of $\Gamma$.} these results imply that, for any given $\sigma \in \MhatSD$, there are at most finitely many parameters $s\in (d/2,\critexp]$ for which the quasi-complementary series $\Ucal(\sigma,s)$ weakly occur in $L^{2}(\GmodGamma)$.
	However, due to the fact that $\MhatSD$ is infinite, such results do not imply the existence of a strong spectral gap in the sense of \cref{def:SSG}.

	\subsection*{Acknowledgments}
	We thank Semyon Dyatlov, Alex Kontorovich, Amir Mohammadi, and Amos Nevo for interesting discussions.
	We thank Hee Oh for suggesting that \cref{thm:upgraded decay of correlations for Haar} follows from our arguments, and for comments on an earlier version of the article.
	We also thank Sam Edwards for pointing out the existence of the ends of complementary series.
	O.K.\ acknowledges support under NSF grants  DMS-2337911 and DMS-2247713. P.S.\ acknowledges support under SNSF grant 10003145 during his time at ETH Z\"urich.
	
	\section{Representation theoretic preliminaries}
	\label{sec: Representation theoretic preliminaries}
	As stated in the introduction, $G := \SO(d+1,1)^\circ$ for $d \geq 1$ and $\G < G$ is a discrete subgroup with critical exponent $\critexp \in [0, d]$. We assume henceforth that $\G < G$ is a Zariski-dense geometrically finite discrete subgroup with $\critexp > d/2$. Correspondingly, $\Gamma\bsl\H^{d + 1}$ is a $(d+1)$-dimensional geometrically finite hyperbolic orbifold. In this section, we collect the necessary background on Lie algebras, Lie groups, and unitary representation theory.
	
	\subsection{Lie theory of \texorpdfstring{$G$}{G}}
	Let us recall the structure of $G$ and the main dynamical system associated to $\G$ for our purposes.
	
	We always use Fraktur letters to denote Lie algebras corresponding to Lie groups, e.g., $\LieG := \Lie(G)$. Fix a Cartan involution on $\LieG$. Via the Killing form on $\LieG$, it induces a left $G$-invariant and right $K$-invariant Riemannian metric on $G$. It also gives rise to an Iwasawa decomposition $G = KAN \cong K \times A \times N$ where
	\begin{align*}
		K &< G, & A &< G, & N &< G,
	\end{align*}
	is a maximal compact subgroup, a maximal diagonalizable subgroup, and a maximal horospherical subgroup, respectively. Let $M := Z_K(A) < K$, i.e., the centralizer of $A$ in $K$. We denote the Haar measure on $G$ induced by the Riemannian metric on $G$ by $\mu_G$ or $dg$ and for other induced unimodular topological groups similarly. Denote by $\T^1(\H^{d + 1})$ and $\F(\H^{d + 1})$ the unit tangent bundle and the oriented orthonormal frame bundle of $\H^{d + 1}$, respectively. We may make the following identifications as Riemannian manifolds equipped with left $G$-actions:
	\begin{align*}
		\H^{d + 1} &\cong G/K, & \T^1(\H^{d + 1}) &\cong G/M, & \F(\H^{d + 1}) &\cong G.
	\end{align*}
	Similarly, we may also make the following identifications as Riemannian orbifolds:
	\begin{align*}
		\Gamma\bsl\H^{d + 1} &\cong \GmodGamma/K, & \T^1(\Gamma\bsl\H^{d + 1}) &\cong \GmodGamma/M, & \F(\Gamma\bsl\H^{d + 1}) &\cong \GmodGamma.
	\end{align*}
	
	Recall that $G$ is of rank one meaning that $A$ is a one-parameter subgroup. We parametrize $A := \{a_t\}_{t \in \R}$ such that its right translation action on $G$ and $\GmodGamma$ (resp. $G/M$ and $\GmodGamma/M$) is the unit speed frame flow (resp. geodesic flow). We may further assume that the parametrization is such that $N$ is the \emph{contracting} horospherical subgroup; and let $\overline{N} < G$ be the \emph{expanding} horospherical subgroup. That is, we have
	\begin{align*}
		N &= \bigl\{g \in G: \lim_{t \to -\infty} a_tga_{-t} = e\bigr\}, & \overline{N} &= \bigl\{g \in G: \lim_{t \to +\infty} a_tga_{-t} = e\bigr\}.
	\end{align*}
	Moreover, $N$ and $\overline{N}$ (resp. $\LieN$ and $\overline{\LieN}$) are $d$-dimensional abelian Lie groups (resp. Lie algebras) isomorphic to $\R^d$. The above parametrization is equivalent to the following adjoint action of $a_t$:
	\begin{align*}
		\Ad(a_t)v &= e^t v, & \Ad(a_t)\overline{v} &= e^{-t}\overline{v}, \qquad \text{for all $v \in \LieN$, $\overline{v} \in \overline{\LieN}$, and $t \in \R$}.
	\end{align*}
	As a consequence of the above discussion, we have unique maps
	\begin{align*}
		\kappa&: G \to K, & H&: G \to \R, & n&: G \to N,
	\end{align*}
	such that
	\begin{align*}
		g = \kappa(g) a_{H(g)} n_g.
	\end{align*}
	Note that $H$ is then parameterized such that $H(a_t) = t$ for all $t \in \R$.

	\subsection{Unitary representation theory: generalities}
	\label{subsec: Unitary representation theory: generalities}

	Let us fix some notations and conventions which apply for any unimodular locally compact Hausdorff topological group, which we denote by $G$ only in this subsection. We refer the reader to the books \cite{Dixmier, WallachII,WarnerII} for details. Throughout the paper, all representations are \emph{strongly continuous} and over $\C$ without any specification. As such, a unitary representation of $G$ is a map
	\begin{align*}
		\pi: G \to \U(\Hcal)
	\end{align*}
	into the unitary group $\U(\Hcal)$ of a Hilbert space $\Hcal$ equipped with an inner product $\langle \cdot, \cdot \rangle_{\Hcal}$ and a corresponding norm $\|\cdot\|_{\Hcal}$, such that for all $v \in \Hcal$, the map $G \to \Hcal$ given by $g \mapsto \pi(g)v$ is continuous. For brevity, we often specify it as a unitary representation $(\pi, \Hcal)$ of $G$. We also use $\|\cdot\|_{\Hcal}$ for the operator norm of bounded operators on $\Hcal$.
	
	For any left (resp. right) $G$-action on a locally compact Hausdorff topological space $X$ endowed with a $G$-invariant Borel measure $\mu_X$, such as $G$ itself, we denote by $L^2(X)$ the Hilbert space equipped with the standard inner product $\langle \cdot, \cdot \rangle_{L^2(X)}$ defined by $\langle \phi, \psi\rangle_{L^2(X)} := \int_X \phi \cdot \overline{\psi} \, d\mu_X$ for all $\phi, \psi \in L^2(X)$. We also denote by $(\lambda_{G}, L^2(X))$ (resp. $(\rho_{G}, L^2(X))$) the left (resp. right) regular representation of $G$ which is defined by
	\begin{align*}
		\lambda_{G}(g)(\phi)(x) = \phi(g^{-1}x) \qquad \text{(resp. $\rho_{G}(g)(\phi)(x) = \phi(xg)$)}
	\end{align*}
	for all $x \in X$, $\phi \in L^2(X)$, and $g \in G$.
	
	A representation $(\rho, \Vcal)$ of $G$ is (strongly) contained in another representation $(\pi, \Hcal)$ of $G$ if $(\rho, \Vcal)$ is isomorphic to a subrepresentation of $(\pi, \Hcal)$. A unitary representation $(\rho, \Vcal)$ of $G$ is \emph{weakly} contained in another unitary representation $(\pi, \Hcal)$ of $G$ if
	any diagonal matrix coefficient of $(\rho, \Vcal)$ is a uniform limit on compact subsets of $G$ of a sequence of linear combinations of diagonal matrix coefficients of $(\pi, \Hcal)$. Recall that a representation of $G$ is said to be irreducible if it does not strictly contain any non-trivial $G$-invariant subspace. The \emph{unitary dual} of $G$, denoted by $\Ghat$, is the set of all isomorphism classes of \emph{irreducible} unitary representations equipped with the hull-kernel/Fell topology. We denote by $\triv \in \Ghat$ the equivalence class of trivial irreducible representations.
	
	The \emph{dual} of a representation $(\pi, \Hcal)$ of $G$ is a new representation $(\pi^*, \Hcal^*)$ of $G$ defined by
	\begin{align*}
		\pi^*(g)(f)(v) = f(\pi(g^{-1})v)
	\end{align*}
	for all $v \in \Hcal$, $f \in \Hcal^*$, and $g \in G$. A representation, $(\pi, \Hcal)$, is \emph{self-dual} if it is isomorphic to $(\pi^*, \Hcal^*)$. We denote $\GhatSD \subset \Ghat$ for the subset of self-dual elements.
	
	A unitary representation $(\pi, \Hcal)$ of $G$ is said to be \emph{tempered} if its matrix coefficients are in $L^{2 + \epsilon}(G)$ for all $\epsilon > 0$. We have the decomposition into a disjoint union $\Ghat = \Ghat_{\temp} \sqcup \Ghat_{\qcomp}$, where $\Ghat_{\temp}$ is the subset consisting of tempered elements and $\Ghat_{\qcomp}$ is the subset consisting of the non-tempered elements.
	
	Let $(\pi, \Hcal)$ be a unitary representation of $G$. We cover some decompositions (see \cite[Chapter 14, \S 14.10, Theorem 14.10.5]{WallachII}). There exists an associated Borel spectral measure $m^{\Hcal}$ on $\Ghat$, which can be further decomposed into Borel spectral measures $m^{\Hcal}_\bullet$ on $\Ghat_\bullet$ for $\bullet \in \{\temp, \qcomp\}$ such that we have the following orthogonal direct integral decomposition:
	\begin{align*}
		(\pi, \Hcal) &= \int_{\Ghat}^\oplus  (\pi_\xi, \Hcal_\xi) \, dm^{\Hcal}(\xi) \\
		&= (\pi_{\temp}, \Hcal_{\temp}) \oplus (\pi_{\qcomp}, \Hcal_{\qcomp}) \\
		&= \int_{\Ghat_{\temp}}^\oplus (\pi_\xi, \Hcal_\xi) \,dm^{\Hcal}_{\temp}(\xi) \oplus \int_{\Ghat_{\qcomp}}^\oplus (\pi_\xi, \Hcal_\xi) \,dm^{\Hcal}_{\qcomp}(\xi)
	\end{align*}
	where $(\pi_{\temp}, \Hcal_{\temp})$ is the tempered subrepresentation and $(\pi_{\qcomp}, \Hcal_{\qcomp})$ is its orthogonal complement, and $(\pi_\xi, \Hcal_\xi)$ is isotypic of type $\xi = (\pi_\xi, \widehat{\Hcal}_\xi) \in \Ghat$, i.e., abusing notation we have the isomorphism $(\pi_\xi, \Hcal_\xi) \cong (\pi_\xi, \widehat{\Hcal}_\xi) \widehat{\otimes} (\triv, \Mcal_\xi)$ for some (possibly infinite dimensional) multiplicity Hilbert space $\Mcal_\xi$. Recall that atoms of Borel measures must be singletons. Moreover, we have the following well-known properties (see \cite[Chapter 14, \S 14.10, Lemmas 14.10.6 and 14.10.7]{WallachII}).
	
	\begin{lem}
		\label{lem: m strong/weak containment}
		Let $(\pi, \Hcal)$ be a unitary representation of $G$. The following holds.
		\begin{enumerate}
			\item An element $\xi \in \Ghat$ is strongly contained in $(\pi, \Hcal)$ if and only if $\xi$ is an atom of $m^\Hcal$.
			\item An element $\xi \in \Ghat$ is weakly contained in $(\pi, \Hcal)$ if and only if $\xi \in \supp(m^\Hcal)$.
		\end{enumerate}
	\end{lem}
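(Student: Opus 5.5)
We prove the two assertions of \cref{lem: m strong/weak containment} from the direct integral decomposition $(\pi,\Hcal)=\int^\oplus_{\Ghat}(\pi_\xi,\Hcal_\xi)\,dm^{\Hcal}(\xi)$ and the fact that $\Ghat$ (with $G$ type I, which holds for the real reductive group of interest) is a standard Borel space whose Fell topology has a countable base. Throughout we use that the decomposition is essentially unique: the measure class of $m^\Hcal$ and the multiplicity function $\xi \mapsto \dim\Mcal_\xi$ are determined up to null sets by $(\pi,\Hcal)$.

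\emph{Part (1).} Suppose first that $m^\Hcal(\{\xi_0\})>0$. The restriction of the direct integral to the singleton $\{\xi_0\}$ is the direct summand $\Hcal_{\xi_0}\cong\widehat{\Hcal}_{\xi_0}\widehat\otimes\Mcal_{\xi_0}$, which has $\Mcal_{\xi_0}\neq 0$ on a set of positive measure, so $\xi_0$ is strongly contained. Conversely, suppose $\xi_0$ embeds in $\Hcal$, giving a nonzero intertwiner $T:\widehat{\Hcal}_{\xi_0}\to\Hcal$. Compose $T$ with the projections onto $\int^\oplus_{\Ghat\setminus\{\xi_0\}}$. This yields an intertwiner from an irreducible representation into a direct integral of representations that are pairwise disjoint from it. Such an intertwiner vanishes: decomposable operators commute with the diagonal algebra $L^\infty(m^\Hcal)$, and by Schur's lemma together with the disjointness of the $\xi\neq\xi_0$ one has $\mathrm{Hom}_G(\xi_0,\int^\oplus_{\Ghat\setminus\{\xi_0\}})=0$. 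This is the main technical point, and we argue it by noting that the image would generate a subrepresentation whose central decomposition is concentrated off $\xi_0$ yet is $\xi_0$-isotypic. Hence $T$ lands in the $\{\xi_0\}$ component, which is therefore nonzero, so $m^\Hcal(\{\xi_0\})>0$.

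\emph{Part (2).} If $\xi_0\in\supp(m^\Hcal)$, pick a unit $K$-finite (or arbitrary) vector $u\in\widehat{\Hcal}_{\xi_0}$ and a compact set $C\subset G$. By definition of the Fell topology, there is a neighborhood $U$ of $\xi_0$ such that every $\xi\in U$ has a diagonal coefficient, or a finite sum of them, within $\epsilon$ of $\langle\xi_0(g)u,u\rangle$ on $C$. Since $m^\Hcal(U)>0$, we choose these vectors measurably on a positive-measure subset $E\subset U$ by measurable selection, and integrate them (normalized by $m^\Hcal(E)$) to obtain a vector $v\in\Hcal$. The diagonal coefficient of $v$ is then an average of coefficients each $\epsilon$-close to $\langle\xi_0(g)u,u\rangle$ on $C$, so $\xi_0$ is weakly contained. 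Conversely, if $\xi_0\notin\supp$, then $\pi$ is a direct integral over the closed set $S=\supp(m^\Hcal)$, and so $\pi$ is weakly equivalent to $\{\xi:\xi\in S\}$: each coefficient of $\pi$ is an integral of coefficients of $\xi\in S$, hence lies in the closed convex hull of their coefficients. Weak containment of $\xi_0$ would then put $\xi_0$ in the Fell closure of $S$, which is $S$ itself. This uses the standard characterization of the Fell closure via approximation of coefficients (Dixmier, §3.4 and §18.1), and it contradicts $\xi_0\notin S$.

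The main obstacle is the measurable-selection step in Part (2), together with the justification in Part (1) that intertwiners respect the central decomposition. In the paper we would simply cite Wallach's Lemmas 14.10.6 and 14.10.7 for both.
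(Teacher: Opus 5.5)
Your proposal is correct and agrees with the paper, which gives no argument of its own for this lemma and simply cites Wallach's Lemmas 14.10.6 and 14.10.7, exactly as you say you would; your sketch is the standard argument behind those lemmas. The one step to phrase more precisely is the converse in Part (1): the fact you need is not that decomposable operators commute with $L^\infty(m^\Hcal)$, which is tautological, but that because $G$ is type I, $L^\infty(m^\Hcal)$ is the center of $\pi(G)'$, so every intertwiner into $\Hcal$ is decomposable and your Schur/disjointness argument then applies fiberwise.
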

	
	\begin{lem}
		\label{lem: m has countable atoms}
		Let $(\pi, \Hcal)$ be a unitary representation of $G$. Suppose that $\Hcal$ is separable. Then, $m^\Hcal$ has at most a countable number of atoms.
	\end{lem}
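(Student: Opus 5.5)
We argue by contradiction, using the direct integral decomposition $(\pi,\Hcal) = \int_{\Ghat}^\oplus (\pi_\xi,\Hcal_\xi)\,dm^\Hcal(\xi)$ together with \cref{lem: m strong/weak containment}: atoms of $m^\Hcal$ correspond to irreducible representations strongly contained in $\Hcal$, each such atom $\xi$ contributing a nonzero closed subspace $\Hcal_\xi \subset \Hcal$. Atoms of a Borel measure are singletons, so distinct atoms $\xi\neq\xi'$ are disjoint Borel sets. The corresponding subspaces $\Hcal_\xi$ and $\Hcal_{\xi'}$ are therefore mutually orthogonal, being the images of the spectral projections attached to $\{\xi\}$ and $\{\xi'\}$ in the direct integral.

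The key steps, in order, are as follows. First, for each atom $\xi$ we have $m^\Hcal(\{\xi\})>0$, and the direct integral restricted to $\{\xi\}$ is the isotypic component $\Hcal_\xi\cong \widehat{\Hcal}_\xi\widehat\otimes\Mcal_\xi$, which is nonzero. (If the multiplicity space $\Mcal_\xi$ were zero, $\xi$ would not carry mass in the support sense relevant here; we take the convention that $m^\Hcal$ is supported where multiplicity is positive.) Pick a unit vector $v_\xi\in\Hcal_\xi$ for each atom. Second, by the orthogonality above, $\{v_\xi\}_{\xi \text{ atom}}$ is an orthonormal family in $\Hcal$. Third, since $\Hcal$ is separable, any orthonormal family is countable: the open balls of radius $1/\sqrt2$ around distinct members are pairwise disjoint, and a countable dense set meets each of them. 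Hence the set of atoms is countable.

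An alternative route is available if $m^\Hcal$ is known to be $\sigma$-finite (for instance, finite after normalization when $\Hcal$ is separable, by choosing the measure class via a cyclic decomposition). A $\sigma$-finite measure has only countably many atoms, since for each $n$ and each finite-measure piece only finitely many atoms can have mass exceeding $1/n$. I would mention this as a remark, because the measure in the decomposition is only determined up to equivalence, and any finite representative of its class has countably many atoms.

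The main obstacle is the first step: justifying rigorously that an atom yields a nonzero subspace, and that the subspaces for distinct atoms are orthogonal. This relies on the construction in Wallach's Theorem 14.10.5, where $m^\Hcal$ comes from a projection-valued measure $P$ on $\Ghat$ and $\Hcal_\xi = P(\{\xi\})\Hcal$. I would invoke that structure directly. Orthogonality then follows from $P(\{\xi\})P(\{\xi'\})=P(\emptyset)=0$, and nonvanishing follows from $m^\Hcal$ and $P$ having the same null sets.
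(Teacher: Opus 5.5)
The paper gives no proof of this lemma. It lists it, together with \cref{lem: m strong/weak containment}, as a well-known property and refers to Wallach's Lemmas 14.10.6 and 14.10.7. Your argument is correct and gives a self-contained proof.

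Your main route is operator-theoretic. You work with the projection-valued measure $P$ behind the direct integral. Distinct atoms give orthogonal ranges, since $P(\{\xi\})P(\{\xi'\})=P(\emptyset)=0$. Each atom gives a nonzero range. An orthonormal family in a separable Hilbert space is countable, so the atoms are countable.

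Your alternative route is measure-theoretic. Separability yields a finite measure with the same null sets as $P$, for instance $E\mapsto\sum_n 2^{-n}\|P(E)v_n\|^2$ for a dense sequence $(v_n)$ in the unit ball. The set of atoms depends only on the measure class, and a finite measure has only countably many atoms.

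You were right to fix a normalization. Adding to $m^{\Hcal}$ counting measure on an uncountable Borel set where the fibers $\Hcal_\xi$ vanish leaves the direct integral unchanged but creates uncountably many atoms. So the lemma is really about a measure with the same null sets as $P$, equivalently $\Hcal_\xi\neq 0$ for $m^{\Hcal}$-almost every $\xi$. This is the same convention implicit in part (1) of \cref{lem: m strong/weak containment}.

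The first route directly exhibits the orthogonal subrepresentations attached to atoms, so it pairs naturally with the strong-containment characterization. The second is shorter once the finite representative is in hand, and it shows that separability enters only through that representative.
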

	
	Now, let $L < G$ be a compact subgroup. Since we may view $(\pi, \Hcal)$ as a unitary representation of $L$, i.e., $(\pi|_L, \Hcal)$, we also have the decomposition
	\begin{align*}
		(\pi|_L, \Hcal) = \Hilbertoplus_{\t\in \widehat{L}} (\t, \Hcal_\t).
	\end{align*}
	where $(\t, \Hcal_\t)$ is isotypic of type $\tau \in \widehat{L}$ in the sense as above---we often call them $L$-types. We also say that $\tau \in \widehat{L}$ is contained in $(\pi, \Hcal)$ whenever $\Hcal_\tau$ is non-trivial and write $\tau \subset \pi$. The classification of $\tau \in \widehat{L}$ which are contained in $(\pi, \Hcal)$ are called \emph{branching laws/rules}. Let $\tau \in \widehat{L}$. Recall that there is an associated character $\chi_\tau \in L^2(L)$ defined by $\chi_\tau(l) = \tr(\tau(l))$ and an operator $\mathsf{P}_\t^{(\pi, \Hcal)}: \Hcal \to \Hcal$ defined by
	\begin{align}\label{eq:definition of P_tau}
		\mathsf{P}_\t^{(\pi, \Hcal)} v = \dim(\tau) \int_L \overline{\chi_\tau(l)} \cdot \pi(l)v \, dl \qquad \text{for all $v \in \Hcal$},
	\end{align}
	where $dl$ is the Haar probability measure on $L$.
	Then it is in fact an orthogonal projection operator onto $\mathsf{P}_\t^{(\pi, \Hcal)}(\Hcal) = \Hcal_\t$. For a maximal compact subgroup $K < G$, the unitary representation $(\pi, \Hcal)$ is \emph{spherical} if it contains a non-trivial $K$-invariant vector.

	\subsection{Sobolev norms}\label{sec:Sobolev}
	Given a  unitary representation $(\pi,\Hcal)$ of $K$,
	a fixed basis $\{X_j\}_{j = 1}^{\dim(\LieK)}$ of $\LieK$, and $m\in \Z_{\geq 0}$, the Sobolev norm $\|\cdot\|_{\Sob_K^m(\Hcal)}$ is defined on smooth vectors $v \in \Hcal^\infty \subset \Hcal$ as
	$$\|v\|^2_{\Sob_K^m(\Hcal)}=\|v\|_\Hcal^2+\sum_{U} \|d\pi(U)(v)\|^2_\Hcal,$$
	where the sum is over all monomials $U$ of degree $m$ in $X_1,\ldots, X_{\dim(\LieK)}$.
	Let $\Sob_K^m(\Hcal)$ denote the completion of $\bigl\{v\in \Hcal^\infty: \|v\|_{\Sob_K^m(\Hcal)}< +\infty\bigr\} \subset \Hcal$ with respect to the Sobolev norm.
	While the Sobolev norm depends on the choice of basis of $\LieK$, different choices give rise to equivalent norms. We note that when $\Hcal$ is finite dimensional, $\Sob_K^m(\Hcal)=\Hcal$.

	Given a unitary representation $(\pi,\Hcal)$ of $G$ we define the Sobolev norm on $\Hcal$ by viewing $(\pi|_K, \Hcal)$ as a unitary representation of $K$.
	It is well known that $K$-finite vectors are smooth and hence have finite Sobolev norm.
	Moreover, we note that for vectors of a fixed $K$-type the Sobolev norm is equivalent to the $L^2$ norm. Explicitly, we have the following.

	\begin{lem}\label{lem: Sobolev bound}
		Let $(\pi,\Hcal)$ be a unitary representation of $G$ on a separable Hilbert space $\Hcal$. For any $\tau\in \widehat{K}$ there is a constant $C(\tau) > 0$ such that for any $\phi\in \Hcal_\tau$, we have
		$$\|\phi\|_{\Sob_K^1(\Hcal)}\leq C(\tau) \|\phi\|_{\Hcal}.$$
	\end{lem}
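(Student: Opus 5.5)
The bound reduces to a fact about a single finite-dimensional unitary representation of $K$. Since $\phi\in\Hcal_\tau$ lies in the $\tau$-isotypic subspace, $\Hcal_\tau$ is a closed $K$-invariant subspace isomorphic, as a $K$-representation, to $\tau\,\widehat{\otimes}\,\Mcal_\tau$ for some multiplicity Hilbert space $\Mcal_\tau$ with trivial $K$-action. The Sobolev norm only involves $d\pi(X_j)$ for $X_j\in\LieK$. It therefore suffices to bound $\|d\pi(X_j)\phi\|_\Hcal$ by a constant depending only on $\tau$ (and the fixed basis) times $\|\phi\|_\Hcal$.

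First, I check that $\phi\in\Hcal_\tau$ is a smooth vector. Indeed $\phi=\mathsf{P}_\tau^{(\pi,\Hcal)}\phi = \dim(\tau)\int_K \overline{\chi_\tau(k)}\,\pi(k)\phi\,dk$, and $\chi_\tau$ is a smooth function on $K$. Differentiating under the integral (a Gårding-type argument using left invariance of Haar measure) shows $k\mapsto\pi(k)\phi$ is smooth. Moreover,
\[
d\pi(X)\phi = -\dim(\tau)\int_K (X^{\mathrm{L}}\overline{\chi_\tau})(k)\,\pi(k)\phi\,dk ,
\]
where $X^{\mathrm{L}}$ denotes the derivative of $\chi_\tau$ along the one-parameter subgroup $\exp(tX)$ acting on the left. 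This gives
\[
\|d\pi(X_j)\phi\|_\Hcal\le \dim(\tau)\,\|X_j^{\mathrm{L}}\chi_\tau\|_{L^\infty(K)}\|\phi\|_\Hcal .
\]

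Alternatively, one can argue more conceptually via the identification $\Hcal_\tau\cong \tau\,\widehat{\otimes}\,\Mcal_\tau$. Under it, $d\pi(X)$ acts as $d\tau(X)\otimes\Id$, whose operator norm equals $\|d\tau(X)\|$, the norm of a matrix on the finite-dimensional space of $\tau$. In either approach, iterating is harmless: $d\pi(X_j)$ preserves $\Hcal_\tau$ because $\Hcal_\tau$ is $K$-invariant and consists of smooth vectors. Hence for the degree-one monomials $U=X_j$,
\[
\|\phi\|^2_{\Sob_K^1(\Hcal)}\le \Bigl(1+\sum_j\|d\tau(X_j)\|^2\Bigr)\|\phi\|_\Hcal^2 .
\]
We then set $C(\tau)=\bigl(1+\sum_j\|d\tau(X_j)\|^2\bigr)^{1/2}$, which depends only on $\tau$ and the fixed basis, not on $\Hcal$.

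The only step needing care is the justification that the isotypic component really is $\tau\,\widehat{\otimes}\,\Mcal_\tau$ with $K$ acting on the first factor, or equivalently that the differentiation under the integral is legitimate. Both follow from the standard Peter–Weyl theory for compact groups, using separability of $\Hcal$ to choose an orthonormal basis of $\Mcal_\tau$. The remainder is routine.
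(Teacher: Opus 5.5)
Your proposal is correct. Your ``alternative'' argument is exactly the paper's proof. The paper writes $\Hcal_\tau\cong V_\tau\otimes\Mcal_\tau$ and expands $\phi=\sum_j\phi_j\otimes v_j$ in a countable orthonormal basis of $\Mcal_\tau$; this is the only place separability is used. It then observes $d\pi(X)\phi=\sum_j (d\tau(X)\phi_j)\otimes v_j$ and bounds each term by the operator norm of $d\tau(X)$ on the finite-dimensional space $V_\tau$.

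Your first argument takes a genuinely different route: you differentiate the identity $\phi=\mathsf{P}_\tau\phi=\dim(\tau)\int_K\overline{\chi_\tau(k)}\,\pi(k)\phi\,dk$. This route needs neither the isotypic tensor decomposition nor separability. It also proves along the way that $\phi$ is a $K$-smooth vector, which the paper takes for granted from ``$K$-finite vectors are smooth.'' The price is the cruder constant $\dim(\tau)\,\|X_j^{\mathrm{L}}\chi_\tau\|_{L^\infty(K)}$ in place of $\|d\tau(X_j)\|$. That difference is immaterial here, since the lemma only asserts that some $C(\tau)$ depending on $\tau$ alone exists. The tensor route, in turn, gives the sharp operator-norm constant and makes independence from $\Hcal$ immediate.

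Incidentally, your bookkeeping $C(\tau)=\bigl(1+\sum_j\|d\tau(X_j)\|^2\bigr)^{1/2}$ is the right one. The paper's write-up drops the square on $\|d\tau(X)\|$ and takes $C(\tau)=1+\max_j\|d\tau(X_j)\|$. That constant does not dominate $\bigl(1+\sum_j\|d\tau(X_j)\|^2\bigr)^{1/2}$ in general, since an extra factor $\sqrt{\dim\LieK}$ on the maximum is needed. This slip does not affect the lemma itself.
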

	
	\begin{proof}
		Let $V_\tau$ denote the finite dimensional Hilbert space realizing $\tau\in \widehat{K}$ and note that as $K$-representations, we have $(\tau, \Hcal_\tau) \cong (\tau, V_\tau) \otimes (\triv, \Mcal_\tau)$, where $K$ acts trivially on a (possibly infinite dimensional) separable multiplicity space $\Mcal_\tau$. That is,
		$$\pi(k) (u\otimes v)=(\tau(k)u)\otimes v \qquad \text{for all $u\otimes v\in V_\tau\otimes \Mcal_\tau$ and $k\in K$}.$$
		Let $\{v_j\}_{j=1}^n$ for some $n \in \Z_{\geq 0} \sqcup \{\infty\}$ be a countable orthonormal Hilbert basis for $\Mcal_\tau$. Let $\phi\in \Hcal_\tau\cong V_\tau\otimes \Mcal_\tau$. Note that it has a decomposition $\phi=\sum_{j=1}^n \phi_j\otimes v_j$, with $\phi_j\in V_\tau$ and $\|\phi\|_\Hcal^2=\sum_{j=1}^n \|\phi_j\|_{V_\tau}^2 < +\infty$.
		Letting $X\in \LieK$, we have
		$$d\pi(X)\phi=\sum_{j=1}^n (d\tau(X)\phi_j)\otimes v_j,$$
		so that by orthonormality of the basis $\{v_j\}_{j=1}^n$, we have
		\begin{align*}
			\|d\pi(X)\phi\|_\Hcal^2&= \left\langle \sum_{i=1}^n(d\tau(X)\phi_i)\otimes v_i, \sum_{j=1}^n (d\tau(X)\phi_j)\otimes v_j \right\rangle_\Hcal \\
			&=\sum_{i,j=1}^n\langle d\tau(X)\phi_i, d\tau(X)\phi_{j}\rangle_{V_\tau} \cdot \langle v_i, v_j\rangle_{\Mcal_\tau} \\
			&=\sum_{j=1}^n\| d\tau(X)\phi_j\|_{V_\t}^2.
		\end{align*}
		Since $V_\tau$ is finite dimensional, for any $v\in V_\tau$ we can bound $\|d\tau(X)v\|_{V_\tau}\leq \|d\tau(X)\|_{V_\tau} \|v\|_{V_\tau}$ where $\|d\tau(X)\|_{V_\tau}$ is the operator norm.
		Hence,
		\begin{align*}
			\|d\pi(X)\phi\|_\Hcal^2&\leq \|d\tau(X)\|_{V_\tau} \sum_{j=1}^n\|\phi_j\|_{V_\tau}^2=\|d\tau(X)\|_{V_\tau} \|\phi\|_\Hcal^2.
		\end{align*}
		Finally, for the fixed basis $\{X_j\}_{j = 1}^{\dim(\LieK)}$ of $\LieK$, taking
		$C(\tau)=1+\max_{1 \leq j \leq \dim(\LieK)} \|d\tau(X_j)\|_{V_\tau}$ gives the desired bound for $\|\phi\|_{\Sob_K^1(\Hcal)}$ and finishes the proof.
	\end{proof}

	\subsection{Unitary representation theory of \texorpdfstring{$\SO(n)$}{SO(n)}}
	\label{subsec: Unitary representation theory of SO}
	We recall some fundamental facts regarding irreducible representations of $\SO(n)$ for any $n \geq 1$, which then apply to both $K \cong \SO(d + 1)$ and its subgroup $M \cong \SO(d)$ (see for instance \cite{Thieleker}).
	
	Firstly, irreducible representations of $\SO(n)$ are unitarizable by compactness of $\SO(n)$, and consequently form the unitary dual $\widehat{\SO(n)}$. By the Peter--Weyl theorem, they are finite-dimensional and contained in $\bigl(\lambda_{\SO(n)}, L^2(\SO(n))\bigr)$; in fact
	\begin{align*}
		\bigl(\lambda_{\SO(n)}, L^2(\SO(n))\bigr) = \Hilbertoplus_{\tau \in \widehat{\SO(n)}} (\tau, V_\tau)^{\oplus \dim(\tau)}.
	\end{align*}
	\subsubsection{Case $2 \mid n$} Let us write $n = 2m$ for $m \in \Z_{> 0}$. The unitary dual $\widehat{\SO(2m)}$ can be canonically identified with
	\begin{align*}
		\{(\tau_1, \dotsc, \tau_m) \in \Z^m: \tau_1 \geq \tau_2 \geq \dotsb \geq \tau_{m - 1} \geq |\tau_m|\}
	\end{align*}
	as topological spaces. As such, we write $\tau = (\tau_1, \dotsc, \tau_m) \in \widehat{\SO(2m)}$ to specify an irreducible representation where the constraints on the integers are understood. Note that, if $4 \mid n$ or equivalently $2 \mid m$, then $\tau$ is self-dual; otherwise, $\tau^* = (\tau_1, \dotsc, \tau_{m - 1}, -\tau_m)$. For the moment, let us view $\SO(2m - 1) \subset \SO(2m)$ as a subgroup and let $\sigma = (\sigma_1, \dotsc, \sigma_{m - 1}) \in \widehat{\SO(2m - 1)}$ (see Case $2 \nmid n$ below). Then, the associated branching law says that $\sigma$ is contained in $\tau$ if and only if they satisfy the interlacing property
	\begin{align*}
		\t_1 \geq \sigma_1 \geq \t_2 \geq \sigma_2 \geq \dotsb \geq \t_{m - 1} \geq \sigma_{m - 1} \geq |\tau_m|;
	\end{align*}
	and in which case the multiplicity is $1$.
	
	\subsubsection{Case $2 \nmid n$} Let us write $n = 2m + 1$ for $m \in \Z_{\geq 0}$. The unitary dual $\widehat{\SO(2m + 1)}$ can be canonically identified with
	\begin{align*}
		\{(\tau_1, \dotsc, \tau_m) \in \Z^m: \tau_1 \geq \tau_2 \geq \dotsb \geq \tau_m \geq 0\}
	\end{align*}
	as topological spaces; for $m = 0$, we use the convention that the latter is a singleton containing the empty tuple. Again, we write $\tau = (\tau_1, \dotsc, \tau_m) \in \widehat{\SO(2m + 1)}$ to specify an irreducible representation. Note that $\tau$ is always self-dual. Again for the moment, let us view $\SO(2m) \subset \SO(2m + 1)$ as a subgroup and let $\sigma = (\sigma_1, \dotsc, \sigma_m) \in \widehat{\SO(2m)}$ (see Case $2 \mid n$ above). Then, the associated branching law says that $\sigma$ is contained in $\tau$ if and only if they satisfy the interlacing property
	\begin{align*}
		\t_1 \geq \sigma_1 \geq \t_2 \geq \sigma_2 \geq \dotsb \geq \t_{m - 1} \geq \sigma_{m - 1} \geq \tau_m \geq |\sigma_m|;
	\end{align*}
	and in which case the multiplicity is $1$.
	
	\subsection{Unitary representation theory of \texorpdfstring{$G$}{G}}
	\label{subsec: Unitary representation theory of G}
	The necessary background covered in this subsection is contained in \cite{Knapp} with further details in references therein. Working in the \emph{compact picture} (in the terminology of \cite[Chapter VII, \S 1]{Knapp}), for all $s \in \C$, define the representation $(U^s, L^2(K))$ of $G$ by
	\begin{align*}
		U^s(g)(\phi)(k) = e^{-sH(g^{-1}k)} \phi(\kappa(g^{-1}k)) \qquad \text{for all $k \in K$, $\phi \in L^2(K)$, and $g \in G$}.
	\end{align*}
	Though the latter is not obvious in the compact picture, we have the following facts:
	\begin{itemize}
		\item $(U^s|_K, L^2(K)) = (\lambda_K, L^2(K))$ is the left regular representation of $K$;
		\item $(U^s, L^2(K))$ is a \emph{unitary} representation if and only if $s \in d/2 + i\R$.
	\end{itemize}
	
	Recall that $P := MAN \cong M \times A \times N$. For any $s \in \C$, let $\chi_s: A \to \C$ be the character defined by $\chi_s(a_t) = e^{st}$ for all $t \in \R$. For any $\sigma \in \Mhat$ and $s \in \C$, viewing $\sigma \otimes \chi_s \otimes \triv$ as a representation of $P$, we may consider the induced representation $\Ind_P^G(\sigma \otimes \chi_s \otimes \triv)$;
	this is called parabolic induction (see \cite[Chapter VII, \S 1]{Knapp}). If $s \in i\R$, they are unitary representations called the \emph{principal series}, and they are irreducible if $s \neq 0$ (see \cite[Theorem 5]{KnappStein}). More generally, the isomorphism classes of the so-called irreducible admissible representations of $G$ (strictly containing the unitary dual $\Ghat$) arise as unique quotients of $\Ind_P^G(\sigma \otimes \chi_s \otimes \triv)$ with $\sigma \in \Mhat$ and $s \in \C$; these are called the Langlands quotients. Moreover, they are tempered only if $s \in i\R$. We refer the reader to \cite[Chapter XIV, \S 17, Theorem 14.92]{Knapp}, \cite[Chapter VIII, \S 14, Theorem 8.53]{Knapp}, and \cite{Langlands} for more details. For all $\sigma \in \Mhat$, define
	\begin{align*}
		L^2(K: \sigma) := \mathsf{P}_\sigma^{(\rho_K|_M, L^2(K))}(L^2(K)) = \left\{\phi \in L^2(K): \dim(\sigma)\int_M \overline{\chi_\sigma(m)} \cdot \rho_K(m)\phi \, dm = \phi\right\}.
	\end{align*}
	It turns out that for all $\sigma \in \Mhat$ and $s \in \C$, we may identify
	\begin{align}
		\label{eqn: LtwoKsigma as induced representation}
		(U^s, L^2(K: \sigma)) \cong \Ind_P^G(\sigma \otimes \chi_{s - d/2} \otimes \triv)^{\oplus \dim(\sigma)}.
	\end{align}
	
	We now introduce an intertwining operator which plays a central role in the classification of $\Ghat$ \cite{KunzeStein,KnappStein,Knapp}. Recall that $\MhatSD \subset \Mhat$ denotes the subset of self-dual elements. Let $\sigma \in \MhatSD$. For all $s > d/2$, define a bounded operator $\mathcal{A}(\sigma, s): L^2(K : \sigma) \to L^2(K : \sigma)$ by
	\begin{align*}
		\mathcal{A}(\sigma, s) = \sigma(w_0) \gamma_\sigma(s)^{-1} \int_{\overline{N}} e^{-s H(\overline{n})} \rho_G(w_0\k(\overline{n})^{-1}) \, d\overline{n}
	\end{align*}
	where $w_0 \in N_K(A)/M$ is the unique nontrivial Weyl element, $\sigma(w_0): L^2(K : \sigma) \to L^2(K : \sigma)$ is a unitary operator according to the unique extension (up to sign) of $\sigma$ from $M$ to $N_K(A)$, and $\gamma_\sigma: \C \to \C$ is a certain meromorphic normalizing function. It enjoys the intertwining property
	\begin{align}
		\label{eqn: A intertwining property}
		\mathcal{A}(\sigma, s)U^s(g) = U^{-s}(g)\mathcal{A}(\sigma, s) \qquad \text{for all $g \in G$}.
	\end{align}
	Abusing notation, we also denote by $\mathcal{A}(\sigma, s)$ the induced operator on $\Ind_P^G(\sigma \otimes \chi_{s - d/2} \otimes \triv)$ via any isomorphic subrepresentation of $L^2(K : \sigma)$.
	
	Let $\sigma \in \Mhat$. Writing $\sigma = (\sigma_1, \dotsc, \sigma_{\lfloor d/2\rfloor})$, define $1 \leq \ell(\sigma) \leq \lfloor d/2\rfloor$ to be the largest index with a positive integer coordinate and put $\ell(\sigma)=0$ if $\sigma$ is the trivial irreducible representation $\triv\in\Mhat$.
	Define the corresponding half-closed interval
	\begin{align*}
		\Ical_\sigma :=
		(d/2, d - \ell(\sigma)] \subset (d/2, d].
	\end{align*}
	The following is a theorem of Knapp--Stein \cite[Theorem 6, and Propositions 49 and 50]{KnappStein}.
	
	\begin{thm}
		Let $\sigma \in \Mhat$ and $s \in \R$. Then, $(U^s, L^2(K : \sigma))$ is \emph{unitarizable} with a semidefinite inner product if and only if $\sigma$ is self-dual and $s \in \Ical_\sigma$.
		Additionally, such an inner product is positive definite if and only if $s \in \interior\Ical_\sigma$.
		The semidefinite inner product $\langle \cdot, \cdot \rangle_{(\sigma, s)}$ on $L^2(K : \sigma)$ can be defined by
		\begin{align}\label{eq:intertwining}
			\langle u, v \rangle_{(\sigma, s)} = \langle \mathcal{A}(\sigma, s)u, v\rangle_{L^2(K)} \qquad \text{for all $u, v \in L^2(K : \sigma)$}.
		\end{align}
	\end{thm}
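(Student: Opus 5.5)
The statement is the Knapp--Stein classification, so the plan is to reconstruct their argument in the compact picture, organized around the intertwining operator $\mathcal{A}(\sigma,s)$.

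\emph{Necessity.} Suppose $(U^s, L^2(K:\sigma))$ preserves a nonzero semidefinite Hermitian form $\langle\cdot,\cdot\rangle$. Such a form is a $G$-equivariant conjugate-linear pairing, i.e.\ an intertwiner from $U^s$ to the Hermitian dual, which in the compact picture is $U^{d-\bar s}$ on the $\sigma^*$-component; up to normalization this is $U^{-s}$ in the paper's conventions, and the reflection is what matters below. I would first use Bruhat theory (the double coset decomposition $P\bsl G/P = \{P, Pw_0P\}$) to show that any such intertwiner for real $s$ off the unitary axis must be supported on the open cell. Second, equivariance of the $M$-parameter under $w_0$ then forces $w_0\sigma \cong \sigma^*$. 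In $\SO(d)$, $w_0$ acts on $\Mhat$ by an outer automorphism that composes with duality to give the identity, so this condition is exactly $\sigma \in \MhatSD$. Third, the form must then be a scalar multiple of $\langle \mathcal{A}(\sigma,s)u,v\rangle$, which is the expression in \eqref{eq:intertwining}.

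\emph{Scalars on $K$-types.} Since $\mathcal{A}(\sigma,s)$ commutes with $K$, and $L^2(K:\sigma)$ is multiplicity-free as a $K$-module in the relevant sense (by Frobenius reciprocity together with the multiplicity-one branching laws recalled earlier), $\mathcal{A}(\sigma,s)$ acts by a scalar $c_\tau(s)$ on each $K$-type $\tau \supset \sigma$. I would compute these scalars as explicit ratios of Gamma functions, as in Knapp--Stein or via Eguchi--Koizumi--Mamiuda. The expected shape, relative to the minimal type, is
\begin{align*}
\frac{c_{\tau'}(s)}{c_\tau(s)} = \frac{\tau_j + a_j - s}{\tau_j + a_j + s - d}
\end{align*}
when $\tau'$ differs from $\tau$ by raising one coordinate $\tau_j$ by $1$. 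Here $a_j$ is a $\rho$-shift, and $\gamma_\sigma$ is chosen so that the form is real and positive on the minimal $K$-type.

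\emph{Sign analysis.} Semidefiniteness then becomes the requirement that all $c_\tau(s)$ share one sign, i.e.\ that every ratio above is nonnegative. For $s > d/2$, each ratio changes sign only when $s$ crosses a value $d - \tau_j - a_j$. Because the coordinates of $\tau$ interlace those of $\sigma$, the critical index is $j = \ell(\sigma)$, the first slot where $\tau$ can increase while $\sigma$ does not force it upward. The relevant bound is $s \le d - \ell(\sigma)$, which gives exactly $\Ical_\sigma$. At the endpoint $s = d - \ell(\sigma)$ some $c_\tau$ vanish, so the form has a nontrivial radical; its quotient is the end of the complementary series. In the interior all $c_\tau > 0$ and the form is definite. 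For $s \le d/2$ with $s$ real, one reduces by the symmetry $s \leftrightarrow d-s$, treats $s = d/2$ directly as unitary induction, and handles $s < d/2$ by the same sign argument.

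The main obstacle I expect is the explicit computation of the scalars $c_\tau(s)$ uniformly over all $K$-types for general $\sigma$, with correct bookkeeping of the parity of $d$ and the interlacing patterns. I would first try to derive these scalars by induction on $K$-types using the action of $\mathfrak{p}$, which shifts $K$-types by one box, rather than by evaluating the Harish-Chandra-type integral defining $\mathcal{A}$ directly.
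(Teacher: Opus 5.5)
Your necessity step has a genuine error, and it sits exactly where self-duality is supposed to come from. An invariant Hermitian form is sesquilinear, so it corresponds to an intertwiner into the \emph{Hermitian} (conjugate-linear) dual, not the linear dual. The $L^2(K)$ pairing is sesquilinear and $\mathsf{P}_\sigma$ for the right $M$-action is an orthogonal projection. Hence the Hermitian dual of $(U^s, L^2(K:\sigma))$ is $(U^{d-\bar s}, L^2(K:\sigma))$ with the \emph{same} $\sigma$. The type $\sigma^*$ only enters through the bilinear pairing, because complex conjugation maps $L^2(K:\sigma)$ onto $L^2(K:\sigma^*)$. Bruhat theory therefore gives $w_0\sigma\cong\sigma$ (for $s\neq d/2$), not $w_0\sigma\cong\sigma^*$.

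Your condition is vacuous precisely where it is needed. Non-self-dual $M$-types exist only for $d\equiv 2\pmod 4$. In that case both $w_0$ (conjugation by an element of $O(d)\smallsetminus\SO(d)$) and duality flip the sign of $\sigma_{d/2}$. So $w_0\sigma\cong\sigma^*$ holds for every $\sigma$ (e.g.\ every character of $M=\SO(2)$ when $d=2$) and excludes nothing.

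Your supporting claim is also wrong as stated. If $w_0$ composed with duality were the identity, your condition would hold for all $\sigma$ and could not single out $\MhatSD$. Moreover, $w_0$ acts by an inner automorphism when $d$ is odd, and when $4\mid d$ duality is trivial while $w_0$ still flips $\sigma_{d/2}$.

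The correct chain is: $w_0\sigma\cong\sigma$ if and only if $d$ is odd or $\sigma_{d/2}=0$, and this implies $\sigma\in\MhatSD$ in every dimension. The converse fails for $4\mid d$. That is harmless only because $\sigma_{d/2}\neq 0$ then forces $\ell(\sigma)=d/2$ and $\Ical_\sigma=\emptyset$, provided $\ell(\sigma)$ is read as the largest index of a \emph{nonzero} coordinate. Note also that $\mathcal{A}(\sigma,s)$ needs the extension $\sigma(w_0)$, i.e.\ $w_0\sigma\cong\sigma$, to be defined at all.

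Second, your plan for $s\le d/2$ cannot succeed, because the statement is false there as literally written. $U^{d/2}$ is unitary for the $L^2(K)$ inner product for \emph{every} $\sigma$, although $d/2\notin\Ical_\sigma$. For $\sigma\in\MhatSD$ with $d-s\in\interior\Ical_\sigma$, the representation $U^s$ is equivalent to the complementary series $U^{d-s}$. So your symmetry reduction would prove unitarizability rather than exclude it. The theorem has to be read for $s>d/2$, the only range where the paper defines $\mathcal{A}(\sigma,s)$.

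In that range, closed-cell (differential) intertwiners $U^s\to U^{d-s}$ are ruled out by $A$-weights, and this is what justifies your uniqueness-up-to-scalar step. For $s<d/2$ your claim that intertwiners are supported on the open cell is false: conformally covariant powers of the Laplacian give intertwiners $U^{d/2-k}\to U^{d/2+k}$ for positive integers $k$.

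With these corrections, your route (uniqueness of the form, explicit $K$-type eigenvalues, sign analysis along interlacing patterns) is a legitimate alternative to what the paper does, which is simply to cite Knapp--Stein. Their argument propagates positivity of the normalized operator from the unitary axis, $K$-type by $K$-type, up to the first real parameter where it degenerates, so it never needs the eigenvalues explicitly. Your approach would additionally give an explicit description of the radical at $s=d-\ell(\sigma)$. However, its whole content is the eigenvalue formula, which you leave open.

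There are also two slips in the sign analysis. Take $a_j=d-j+1$, the choice that reproduces $\Ical_\sigma$. Then for $s>d/2$ the sign change occurs at the numerator zero $s=\tau_j+a_j$, not at $d-\tau_j-a_j$, which is a denominator zero lying below $d/2$. And the binding move is raising slot $\ell(\sigma)+1$ from $0$, not slot $\ell(\sigma)$; this yields exactly $s\le d-\ell(\sigma)$.
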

	
	\begin{remark}
		Indeed, unitarity with respect to the (possibly semidefinite) inner product follows from the intertwining property of $\mathcal{A}(\sigma, s)$.
	\end{remark}
	
	Let $\sigma \in \MhatSD$ and $s \in \R$. Denote by
	\begin{align*}
		\Zcal(\sigma, s) := \{v \in L^2(K : \sigma): \|v\|_{(\sigma, s)} = 0\} \subset L^2(K : \sigma)
	\end{align*}
	the subspace of null vectors in $L^2(K : \sigma)$ for the seminorm $\|\cdot\|_{(\sigma, s)}$ corresponding to the semidefinite inner product $\langle \cdot, \cdot \rangle_{(\sigma, s)}$, which is nontrivial if and only if $s = d - \ell(\sigma) \in \partial\Ical_\sigma$. Then, observe that $(U^s, \ker\mathcal{A}(\sigma, s)) = (U^s, \Zcal(\sigma, s))$ and is a subrepresentation of $(U^s, L^2(K : \sigma))$.
	
	We have the following theorem according to the classification results of Hirai \cite{Hirai}. We also refer the reader to \cite[Pg 152]{KnappZuckerman} for a deduction of this result from the general results of Langlands \cite{Langlands} and Knapp--Stein \cite{KnappStein}.
	
	\begin{thm}
		\label{thm: non-tempered irreps}
		The non-tempered elements of $\Ghat$ are exhausted by the \emph{quasi-complementary series} which is itself the union of the two families in the following list:
		\begin{enumerate}
			\item the \emph{complementary series} consisting of
			\begin{align*}
				(\pi_{\sigma, s}, \Ucal(\sigma, s)) := \Ind_P^G(\sigma \otimes \chi_{s - d/2} \otimes \triv) \qquad \text{for $\sigma \in \MhatSD$ and $s \in \interior\Ical_\sigma$};
			\end{align*}
			\item the \emph{ends of complementary series} consisting of
			\begin{align*}
				(\pi_{\sigma, s}, \Ucal(\sigma, s)) := \Ind_P^G(\sigma \otimes \chi_{s - d/2} \otimes \triv)/\ker\mathcal{A}(\sigma, s) \qquad \text{for $\sigma \in \MhatSD$ and $s = d - \ell(\sigma) \in \partial\Ical_\sigma$};
			\end{align*}
		\end{enumerate}
		and in both cases equipped with an inner product $\langle \cdot, \cdot \rangle_{\mathcal{U}(\sigma, s)}$ induced by the (possibly semidefinite) inner product $\langle \cdot, \cdot \rangle_{(\sigma, s)}$ on $L^2(K : \sigma)$.
	\end{thm}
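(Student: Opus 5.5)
The plan is to derive the classification from the Langlands classification together with the Knapp--Stein analysis of the intertwining operators $\mathcal{A}(\sigma, s)$, as in \cite[Pg 152]{KnappZuckerman}. \emph{Step 1 (Langlands parameters).} By \cite{Langlands} and \cite[Theorem 8.54]{Knapp}, every irreducible admissible representation of $G$ is either tempered or is the unique irreducible quotient $J(\sigma, s)$ of $\Ind_P^G(\sigma \otimes \chi_{s - d/2} \otimes \triv)$ with $\sigma \in \Mhat$ and $\Re(s) > d/2$, and the parameter $(\sigma, s)$ is unique. Moreover, $J(\sigma, s)$ is realized as the image of the Knapp--Stein operator $\mathcal{A}(\sigma, s)$, since this operator is holomorphic and nonzero in the region $\Re(s) > d/2$. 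Hence
\[
J(\sigma, s) \cong \Ind_P^G(\sigma \otimes \chi_{s - d/2} \otimes \triv)/\ker \mathcal{A}(\sigma, s).
\]
Every non-tempered element of $\Ghat$ is therefore some unitarizable $J(\sigma, s)$ with $\Re(s) > d/2$.

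\emph{Step 2 (Hermitian condition).} A unitarizable representation is isomorphic to its Hermitian (conjugate) dual. The Hermitian dual of $J(\sigma, s)$ is a Langlands quotient with parameter $(w_0\sigma^*, \bar s)$, after moving the exponent back to the positive chamber using $w_0$. Uniqueness of Langlands parameters then forces $s = \bar s$ and $w_0 \sigma \cong \sigma^*$, where $w_0$ is the nontrivial Weyl element. For $M = \SO(d)$, the twist by $w_0$ is either trivial or the outer automorphism that flips the sign of the last coordinate, and in each case this identifies the condition with $\sigma \cong \sigma^*$, that is, $\sigma \in \MhatSD$. So we may assume $s \in \R$ with $s > d/2$ and $\sigma \in \MhatSD$.

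\emph{Step 3 (which real parameters are unitary).} Any invariant Hermitian form on $J(\sigma, s)$ is unique up to a scalar, since the quotient is irreducible. It must therefore coincide with the form induced by $\langle \mathcal{A}(\sigma, s) u, v \rangle_{L^2(K)}$. The Knapp--Stein theorem stated above gives the positive direction: for $s \in \Ical_\sigma$ this form is positive semidefinite, so $J(\sigma, s)$ is unitary. For $s \in \interior \Ical_\sigma$ the form is positive definite, so $\ker \mathcal{A}(\sigma, s) = 0$ and $J(\sigma, s) = \Ind_P^G(\sigma \otimes \chi_{s - d/2} \otimes \triv)$; this is the complementary series. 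At $s = d - \ell(\sigma)$ we obtain the quotient by $\Zcal(\sigma, s) = \ker \mathcal{A}(\sigma, s)$; these are the ends of complementary series.

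The main obstacle is the converse: for $s > d - \ell(\sigma)$ the \emph{quotient} $J(\sigma, s)$ should not be unitary. The Knapp--Stein statement only asserts that the form on the full induced space fails to be semidefinite, and this by itself does not rule out that the form is semidefinite modulo its kernel. To handle this, I would diagonalize $\mathcal{A}(\sigma, s)$ on $K$-types. By the branching laws of \cref{subsec: Unitary representation theory of SO}, each $\tau \in \widehat{K}$ occurs in $L^2(K : \sigma)$ according to interlacing with $\sigma$, and $\mathcal{A}(\sigma, s)$ acts on the $\tau$-isotypic part by an explicit scalar. This scalar is a product of Gamma-function ratios in $s$ and the coordinates of $\tau$ (cf.\ the formulas of \cite{EguchiKoizumiMamiuda}). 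The task is then to exhibit, for each $s > d - \ell(\sigma)$, two $K$-types $\tau, \tau'$ that survive in the quotient, i.e.\ on which the scalar is nonzero, and on which the scalars have opposite signs. The natural first attempt is to take $\tau$ the minimal $K$-type and $\tau'$ obtained by raising the coordinate of $\tau$ adjacent to position $\ell(\sigma)$. I expect the most delicate part to be the sign bookkeeping at the reducibility points $s \in d - \ell(\sigma) + \Z_{\geq 0}$, where some scalars vanish and the surviving $K$-types must be chosen carefully. Combining Steps 1--3 gives exactly the two families in the statement, with inner product $\langle \cdot, \cdot \rangle_{\Ucal(\sigma, s)}$ induced from $\langle \cdot, \cdot \rangle_{(\sigma, s)}$.
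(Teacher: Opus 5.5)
The paper does not prove this theorem. It quotes Hirai's classification and refers to Knapp--Zuckerman for a deduction from Langlands and Knapp--Stein, which is the route you follow. As written, however, your outline has two genuine gaps.

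\textbf{Step~2 uses the wrong Hermitian dual.} The Hermitian dual of $\Ind_P^G(\sigma \otimes \chi_{s-d/2} \otimes \triv)$ is $\Ind_P^G(\sigma \otimes \chi_{d/2-\bar s} \otimes \triv)$, with $\sigma$ and not $\sigma^*$. Complex conjugation replaces $\sigma$ by $\bar\sigma \cong \sigma^*$, and dualizing brings it back; you took the $M$-parameter of the contragredient. So the Hermitian dual of $J(\sigma,s)$ is $J(w_0\sigma,\bar s)$, and a nonzero invariant Hermitian form exists iff $s\in\R$ and $w_0\sigma\cong\sigma$.

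Your identification of $w_0\sigma\cong\sigma^*$ with $\sigma\cong\sigma^*$ is also false. For $d\equiv 2\pmod 4$, both $w_0$ and $\sigma\mapsto\sigma^*$ flip the sign of $\sigma_{d/2}$, so your condition holds for every $\sigma$ and excludes nothing. For $d=2$, where $G\cong\mathrm{PSL}_2(\C)$, it would leave every $\sigma=(n)$, whereas complementary series occur only for $n=0$. These non-self-dual $\sigma$ are never treated in Step~3, where $\mathcal{A}(\sigma,s)$ is not even defined for them.

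The correct condition $w_0\sigma\cong\sigma$ (for $d$ even, $\sigma_{d/2}=0$) does imply $\sigma\in\MhatSD$. It is also exactly what is needed for $\sigma(w_0)$, and hence $\mathcal{A}(\sigma,s)$, to make sense. For $d\equiv 0\pmod 4$ it is strictly stronger than self-duality. The extra self-dual $\sigma$ with $\sigma_{d/2}\neq 0$ then have no unitary quotient at all. This matches the statement only because $\Ical_\sigma=\emptyset$ for them, reading $\ell(\sigma)$ as the index of the last nonzero coordinate.

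A smaller point on Step~1: the identification $J(\sigma,s)\cong\Ind_P^G(\sigma\otimes\chi_{s-d/2}\otimes\triv)/\ker\mathcal{A}(\sigma,s)$ is Langlands' theorem that $J$ is the image of the convergent intertwining integral. Nonvanishing of the operator alone would not make its image irreducible.

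\textbf{The converse in Step~3 is only a strategy.} Non-unitarity of $J(\sigma,s)$ for $s>d-\ell(\sigma)$ is the real content of Hirai's theorem. You correctly see that the Knapp--Stein criterion concerns the form on the whole induced space. What must be shown is that for every such $s$ there are two $K$-types outside $\ker\mathcal{A}(\sigma,s)$ on which $a(\sigma,s,\tau)$ has opposite signs.

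That requires the eigenvalues of $\mathcal{A}(\sigma,s)$ themselves, not the scalars $\mathsf{C}_+(\tau:\sigma;s)$ of Eguchi--Koizumi--Mamiuda. The two differ by a $\tau$-dependent sign: the sign with which $\tau(w_0)$, relative to $\sigma(w_0)$, acts on the $\sigma$-isotypic line of $\tau$. For $\sigma=\triv$ this sign is $(-1)^n$ on degree-$n$ spherical harmonics. So the $C$-function values alternate in sign even throughout the complementary-series range, and these signs are precisely what decides unitarity.

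Only the range $s>d$ is cheap: there the leading exponent $e^{(s-d)t}$ of the matrix coefficients of $J(\sigma,s)$ is incompatible with boundedness. For $\sigma\neq\triv$ and $d-\ell(\sigma)<s\le d$, including the reducibility points, the sign analysis has to be carried out, or Hirai cited. Without it, your argument shows only that the listed representations are unitary and non-tempered, not that they exhaust the non-tempered part of $\Ghat$.
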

	
	\begin{remark}
		Indeed, since $\ker\mathcal{A}(\sigma, s) = \Zcal(\sigma, s)$, the inner product $\langle \cdot, \cdot \rangle_{\Ucal(\sigma, s)}$ is positive-definite for the ends of complementary series $\Ucal(\sigma, s)$ for $\sigma \in \MhatSD$ and $s = d - \ell(\sigma) \in \partial\Ical_\sigma$.
	\end{remark}
	
	When $\sigma = \triv \in \MhatSD$, then there exists a $K$-invariant vector and so these are called the spherical quasi-complementary series. The remaining ones are called the non-spherical quasi-complementary series. In particular, as $G$ is non-compact, the trivial irreducible representation is not tempered and arises as the end of spherical complementary series $\triv = (\pi_{\triv, d}, \Ucal(\triv, d)) \in \Ghat$. Note that for $\sigma \in \MhatSD$ and $s = d - \ell(\sigma) \in \partial\Ical_\sigma$, the intertwining operator further descends to an operator on $\Ucal(\sigma, s)$ which, again by abuse of notation, we also denote by $\mathcal{A}(\sigma, s)$.
	
	In light of \cref{eqn: LtwoKsigma as induced representation}, we deduce the following elementary but crucial corollary of \cref{thm: non-tempered irreps}.
	
	\begin{cor}
		\label{cor: complementary series embedding}
		We have the following as unitary representations of $G$:
		\begin{enumerate}
			\item $(\pi_{\sigma, s}, \Ucal(\sigma, s))$ is contained in $(U^s, L^2(K : \sigma))$ for all $\sigma \in \MhatSD$ and $s \in \interior\Ical_\sigma$;
			\item $(\pi_{\sigma, s}, \Ucal(\sigma, s))$ is contained in $(U^s, L^2(K : \sigma)/\ker\mathcal{A}(\sigma, s))$ for all $\sigma \in \MhatSD$ and $s = d - \ell(\sigma) \in \partial\Ical_\sigma$.
		\end{enumerate}
	\end{cor}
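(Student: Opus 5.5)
The plan is to obtain both items of \cref{cor: complementary series embedding} directly from \cref{thm: non-tempered irreps} and the identification \cref{eqn: LtwoKsigma as induced representation}, taking care to match the inner products.

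For item (1), fix $\sigma \in \MhatSD$ and $s \in \interior\Ical_\sigma$. By \cref{eqn: LtwoKsigma as induced representation}, $(U^s, L^2(K:\sigma))$ is isomorphic to $\dim(\sigma)$ copies of $\Ind_P^G(\sigma \otimes \chi_{s-d/2} \otimes \triv)$. I will choose one summand, i.e.\ the image $\Vcal \subset L^2(K:\sigma)$ of one copy under this isomorphism. Concretely, for a fixed unit vector $e$ in the $\sigma$-isotypic model, $\Vcal$ consists of those $\phi$ whose right $M$-equivariance is governed by $e$. This $\Vcal$ is a $U^s(G)$-invariant closed subspace, since $U^s$ acts by left translation in the compact picture and therefore commutes with the right $M$-action used to define the copies.

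Next I equip $L^2(K:\sigma)$ with the form $\langle\cdot,\cdot\rangle_{(\sigma,s)}$ from \cref{eq:intertwining}. By Knapp--Stein this form is positive definite on the interior and is $U^s$-invariant by \cref{eqn: A intertwining property}. Restricting it to $\Vcal$ and completing yields exactly the unitary structure used in \cref{thm: non-tempered irreps} to define $\Ucal(\sigma,s)$. The inclusion $\Vcal \hookrightarrow L^2(K:\sigma)$ is then an isometric $G$-equivariant embedding for these forms, which gives item (1).

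The one point requiring care is that $\mathcal{A}(\sigma,s)$ preserves each copy $\Vcal$, or at least that the induced form on the abstract induced representation agrees with the one restricted from $L^2(K:\sigma)$. I will check this using the formula for $\mathcal{A}(\sigma,s)$. It is built from the left $G$-action $\rho_G(w_0\kappa(\overline n)^{-1})$ together with $\sigma(w_0)$. It therefore commutes with the right $M$-action, up to the twist by $w_0$ normalizing $M$. Since $\sigma$ is self-dual and $\sigma(w_0)$ is precisely the compensating operator, $\mathcal{A}(\sigma,s)$ preserves the decomposition into copies. I expect this to be the main, though modest, obstacle: it amounts to a careful reading of the conventions ("abusing notation" in the paper already asserts that $\mathcal{A}$ induces the operator on any copy).

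For item (2), take $s = d - \ell(\sigma)$. The form is now only semidefinite, with null space $\Zcal(\sigma,s)=\ker\mathcal{A}(\sigma,s)$, which is $U^s$-invariant. Passing to the quotient $L^2(K:\sigma)/\ker\mathcal{A}(\sigma,s)$ with the induced positive-definite form, the image of $\Vcal$ is $\Vcal/(\Vcal\cap\ker\mathcal{A}) = \Ind/\ker\mathcal{A}$, using again that $\mathcal{A}$ respects the copies. By \cref{thm: non-tempered irreps}(2) this image is exactly $\Ucal(\sigma,s)$ with its inner product. The induced map to the quotient is an isometric $G$-equivariant embedding, which completes item (2).
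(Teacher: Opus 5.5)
Your proposal is correct and follows essentially the same route as the paper. The paper states the corollary as an immediate consequence of \cref{eqn: LtwoKsigma as induced representation} and \cref{thm: non-tempered irreps}, with the inner product on $\Ucal(\sigma,s)$ being by definition the one induced from $\langle\cdot,\cdot\rangle_{(\sigma,s)}$ on $L^2(K:\sigma)$, or on its quotient by $\ker\mathcal{A}(\sigma,s)$ at the endpoint. The point you single out (that $\mathcal{A}(\sigma,s)$ preserves each copy, because after the $\sigma(w_0)$ compensation it commutes with the right $M$-action) is exactly what the paper takes for granted when it says $\mathcal{A}(\sigma,s)$ induces an operator on any isomorphic copy of the induced representation inside $L^2(K:\sigma)$.
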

	
	\begin{remark}
		We warn the reader that for the ends of complementary series $(\pi_{\sigma, s}, \Ucal(\sigma, s))$ for $\sigma \in \MhatSD$ and $s = d - \ell(\sigma) \in \partial\Ical_\sigma$, we do not have the stronger form: $(\pi_{\sigma, s}, \Ucal(\sigma, s))$ is contained in $(U^s, L^2(K : \sigma))$. Nevertheless, the weaker form suffices for our purposes; see the lifting argument in the proof of \cref{cor:matrix coeff expand}.
	\end{remark}
	
	As a consequence of Frobenius reciprocity (cf. \cite[Chapter 5, \S 5.5.1]{WarnerI}), for any $\sigma \in \Mhat$ and $s \in \C$, the multiplicity of a $K$-type $\tau \in \widehat{K}$ occurring in $\Ind_P^G(\sigma \otimes \chi_s \otimes \triv)$ is exactly the multiplicity of $\sigma$ occurring in $\tau$. Therefore, $\tau$ is contained in $\Ind_P^G(\sigma \otimes \chi_s \otimes \triv)$ if and only if $\sigma$ is contained in $\tau$, and in which case both multiplicities are $1$. Now, for $\sigma \in \MhatSD$, we apply \cref{thm: non-tempered irreps} to conclude the following. For all $s \in \interior\Ical_\sigma$, the previous characterization applies for the $K$-types in the complementary series $\mathcal{U}(\sigma, s)$. For $s = d - \ell(\sigma) \in \partial\Ical_\sigma$, further care is needed: a $K$-type $\tau \in \widehat{K}$ is contained in the end of complementary series $\mathcal{U}(\sigma, s)$ if and only if $\sigma$ is contained in $\tau$ and $\tau$ is not contained in $\ker \mathcal{A}(\sigma, s)$, and in which case the multiplicity is $1$. This is not a simple criteria; however, we have a result of Vogan quoted in \cref{thm: minimal K-type in quasi-complementary series} which will be useful. For this, we need to introduce the concept of minimal $K$-types as defined in loc. cit.; for convenience, we write it explicitly for our setting where $K \cong \SO(d + 1)$ (cf. \cite[\S 4]{Thieleker}).
	
	\begin{definition}[Minimal $K$-Type]
		A $K$-type $\tau = (\tau_1, \dotsc, \tau_{\lceil d/2\rceil}) \in \widehat{K}$ of a unitary representation $(\pi, \Hcal)$ of $G$ is called a \emph{minimal $K$-type} of $(\pi, \Hcal)$ if
		\begin{align}
			\label{eqn: minimal K-type}
			\lambda_\tau := \sum_{j = 1}^{\lceil d/2\rceil} \left(\tau_j + \frac{d + 1 - 2j}{2}\right)^2
		\end{align}
		is minimized among all the $K$-types contained in $(\pi, \Hcal)$.
	\end{definition}
	
	\begin{remark}
		In light of the formula for the (intrinsic) Casimir eigenvalue \cite[Chapter 5, \S 4, Proposition 5.28]{Knapp-LieGroups}, the above definition is equivalent to the condition that the (intrinsic) Casimir eigenvalue is minimized among all the $K$-types contained in $(\pi, \Hcal)$.
	\end{remark}
	
	The following is the result of Vogan from \cite[Chapter XV, \S 3, Theorem 15.10]{Knapp}.
	
	\begin{thm} 
		\label{thm: minimal K-type in quasi-complementary series}
		Let $\sigma \in \MhatSD$ and $s \in \Ical_\sigma$. All minimal $K$-types of $\Ind_P^G(\sigma \otimes \chi_{s - d/2} \otimes \triv)$ are contained in $\Ucal(\sigma, s)$.
	\end{thm}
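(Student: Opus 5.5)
The plan is to reduce the statement to a scalar computation for the intertwining operator on a single $K$-isotypic component. The interior case is handled by irreducibility; only the endpoint $s=d-\ell(\sigma)$ needs real work.

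\emph{Interior case.} For $s\in\interior\Ical_\sigma$, \cref{thm: non-tempered irreps} gives $\Ucal(\sigma,s)=\Ind_P^G(\sigma\otimes\chi_{s-d/2}\otimes\triv)$ on the nose, since $\ker\mathcal{A}(\sigma,s)$ is trivial there. So every $K$-type of the induced representation, minimal or not, occurs in $\Ucal(\sigma,s)$, and there is nothing to prove.

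\emph{Endpoint case.} Let $s=d-\ell(\sigma)$. By the discussion preceding the statement, a $K$-type $\tau$ lies in $\Ucal(\sigma,s)$ if and only if $\sigma\subset\tau$ and $\tau\not\subset\ker\mathcal{A}(\sigma,s)$. The argument then runs in three steps.
\begin{enumerate}
\item \emph{Scalar action.} By the intertwining property \cref{eqn: A intertwining property} restricted to $K$ (where $U^{\pm s}|_K=\lambda_K$), the operator $\mathcal{A}(\sigma,s)$ commutes with $K$. By Frobenius reciprocity each $\tau\supset\sigma$ occurs with multiplicity one in $\Ind_P^G(\sigma\otimes\chi_{s-d/2}\otimes\triv)$. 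Schur's lemma then shows that $\mathcal{A}(\sigma,s)$ acts on the $\tau$-isotypic piece by a scalar $c_\tau(\sigma,s)$. Hence the claim is equivalent to $c_\tau(\sigma,d-\ell(\sigma))\neq0$ for every minimal $\tau$.
\item \emph{Identify the minimal $K$-types.} Using the interlacing branching rules of \cref{subsec: Unitary representation theory of SO}, list the $\tau\supset\sigma$. Since the quantity $\lambda_\tau$ in \cref{eqn: minimal K-type} is strictly increasing in each $\tau_j\ge0$, the minimizers are the $\tau$ whose entries take the smallest values allowed by interlacing. For $d$ even this gives $\tau=(\sigma_1,\dots,\sigma_{d/2})$. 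For $d$ odd it gives $\tau=(\sigma_1,\dots,\sigma_{\lfloor d/2\rfloor},0)$, or $\tau=(\sigma_1,\dots,\pm|\sigma_{\lfloor d/2\rfloor}|)$ depending on which entry is constrained. In every case the minimal $\tau$ satisfies $\tau_{\ell(\sigma)+1}=0$.
\item \emph{Evaluate the scalar.} Insert the explicit formula for $c_\tau(\sigma,s)$ as a ratio of products of Gamma functions (Knapp--Stein, Thieleker, or the Harish-Chandra function formulas of \cite{EguchiKoizumiMamiuda} recalled later in the paper). Show that the factor vanishing at $s=d-\ell(\sigma)$ involves $\tau_{\ell(\sigma)+1}$, roughly $\Gamma$-poles of the form $1/\Gamma(\ell(\sigma)+1-d+s-\tau_{\ell+1}\ldots)$. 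This factor is nonzero exactly when $\tau_{\ell(\sigma)+1}=0$, so it does not vanish on the minimal $K$-types.
\end{enumerate}

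\emph{Main obstacle.} I expect the scalar computation in the last step to be the hardest part: pinning down the normalization $\gamma_\sigma(s)$ and tracking exactly which Gamma factors vanish. If that becomes unwieldy, the fallback is a structural argument in the spirit of Vogan. The kernel $\ker\mathcal{A}(\sigma,s)$ is a proper $G$-invariant subspace, and one shows it is generated by $K$-types reached from a minimal one by applying $\mathfrak p\otimes$ (the $K$-types of $\mathfrak p\cong\R^{d+1}$). One then checks via the Casimir eigenvalue comparison (the remark after the definition of minimal $K$-type) that any irreducible subquotient containing a minimal $K$-type has Langlands parameter $(\sigma,s)$. This forces that subquotient to be the Langlands quotient $\Ucal(\sigma,s)$. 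This second route is essentially the proof in \cite[Theorem 15.10]{Knapp}, and in the paper one may simply invoke it.
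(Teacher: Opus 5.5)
\textbf{Verdict: correct, but by a different route from the paper.} The paper does not prove this statement. It quotes it from Vogan's theorem \cite[Chapter XV, \S 3, Theorem 15.10]{Knapp}, which holds for Langlands quotients of general real reductive groups. Your fallback is exactly that citation. Your sketch of Vogan's argument is not itself a proof, since the step ``the subquotient containing a minimal $K$-type is the Langlands quotient'' is the whole content of the theorem.

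Your primary route is a direct computation for $\SO(d+1,1)$, and it is easier to finish than you expect because its key input is already in the paper. Your interior case and Step~1 are fine. In Step~2 the minimal $K$-type is unique: $\tau_{\min}=(\sigma_1,\dots,\sigma_{d/2-1},|\sigma_{d/2}|)$ for $d$ even and $\tau_{\min}=(\sigma_1,\dots,\sigma_{(d-1)/2},0)$ for $d$ odd. Your ``$\pm$'' alternative is not minimal. This $\tau_{\min}$ is precisely the $K$-type chosen in \cref{prop:nonvanishing}. That proposition is a pure Gamma-function computation that does not use the statement (only \cref{cor:nonvanishing} does), and it gives $\mathsf{C}_+(\tau_{\min}:\sigma;s)\neq 0$ for \emph{all} $s>d/2$.

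What you must supply is the standard link between $\mathcal{A}(\sigma,s)$ and the $C$-function. The integral defining $\mathcal{A}(\sigma,s)$ acts by right translations. Hence on $L^2(K:\sigma)_\tau$ it acts through $\tau(w_0)\int_{\overline{N}}e^{-sH(\overline{n})}\tau(\kappa(\overline{n})^{-1})\,d\overline{n}$ on the copy of $\sigma$ in $\tau$. This gives $|a(\sigma,s,\tau)|=|\gamma_\sigma(s)|^{-1}\,|\mathsf{C}_+(\tau:\sigma;s)|$. The normalization you flag as the main obstacle is then harmless. The integral converges absolutely for $s>d/2$, and $\mathcal{A}(\sigma,d-\ell(\sigma))$ is a non-zero bounded operator, so $\gamma_\sigma(d-\ell(\sigma))^{-1}$ is finite and non-zero. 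Therefore $a(\sigma,d-\ell(\sigma),\tau_{\min})\neq 0$, with no analysis at the endpoint at all.

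\textbf{What each route buys.} The citation is general and costs nothing. Yours is self-contained for this $G$. If you also carry out the endpoint analysis of your Step~3 over all $\tau\supset\sigma$, it identifies the full $K$-type content of the end of complementary series: the $\tau\supset\sigma$ with $\tau_{\ell(\sigma)+1}=0$. That is the criterion the paper calls not simple.

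\textbf{Caution on the formula in \cref{thm:EguchiKoizumiMamiuda formula}.} In the paper's normalization, the displayed formula must be read with $s$ replaced by $s-d/2$. To see this, note that $\Ucal(\triv,d)=\triv$ forces every non-trivial $\tau$ into $\ker\mathcal{A}(\triv,d)$. By the identity above, this means $\mathsf{C}_+(\tau:\triv;d)=0$. Yet for $d=2$ and $\tau=(1)$, the displayed expression at $s=2$ equals $1/6$.

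Your factor $1/\Gamma(s-d+\ell(\sigma)+1-\tau_{\ell(\sigma)+1})$ is the correctly shifted one. After the shift:
\begin{itemize}
\item the factors with $j\le \ell(\sigma)$ have simple poles at $s=d-\ell(\sigma)$ for every $\tau$, and they cancel against the corresponding $\sigma$-factors;
\item for $d$ odd with $\ell(\sigma)=(d-1)/2$, the values $\tau_{(d+1)/2}<0$ are killed by the second product in the denominator.
\end{itemize}
The shift does not affect \cref{prop:nonvanishing}, whose positivity arguments go through verbatim for $s-d/2>0$. So relying on that proposition, as suggested above, remains legitimate.
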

	
	Recall the decomposition $\Ghat = \Ghat_{\temp} \sqcup \Ghat_{\qcomp}$. We further have the identification
	\begin{align*}
		\Ghat_{\qcomp} \cong \{(\sigma, s): \sigma \in \MhatSD, s \in \Ical_\sigma\}
	\end{align*}
	as topological spaces, where $\Ghat$ is equipped with the Fell topology and the right hand side is equipped with the standard topology.

	Let $(\pi,\Hcal)$ be a unitary representation of $G$. Using the above decomposition, together with the direct integral decomposition from \cref{subsec: Unitary representation theory: generalities}, we obtain a corresponding decomposition of the measure $m_{\qcomp}$ into measures $\{m_\sigma\}_{\sigma \in \MhatSD}$ such that
	$$\supp(m_\sigma)\subset \Ical_\sigma \qquad \text{for all $\sigma \in \MhatSD$},$$
	and
	\begin{align}\label{eq:definition of mu_upsilon}
		(\pi,\Hcal) = \int_{\Ghat}^\oplus (\pi_\xi, \Hcal_\xi) \, dm(\xi) = \int_{\Ghat_{\temp}}^\oplus (\pi_\xi, \Hcal_\xi) \,dm_{\temp}(\xi) \oplus \Hilbertoplus_{\sigma \in \MhatSD} \int_{\Ical_\sigma}^\oplus (\pi_{\sigma, s}, \Hcal(\sigma, s)) \, dm_\sigma(s)
	\end{align}
	where each $(\pi_\xi, \Hcal_\xi)$ is isotypic of type $\xi  \in \Ghat$.
	
	By definition of the direct integral, we obtain the following explicit decomposition for any vector $\phi \in \Hcal$: there exists a measurable section $\{\phi_\xi\}_{\xi \in \Ghat} = \{\phi_\xi\}_{\xi \in \Ghat_{\temp}} \sqcup \{\phi_{\sigma, s}\}_{\sigma \in \MhatSD, s \in \Ical_\sigma}$ with $\phi_\xi \in \Hcal_\xi$ and $\phi_{\sigma, s} \in \Hcal(\sigma, s)$, such that
	\begin{align}
		\label{eqn: measurable sections for direct integral}
		\phi = \int_{\Ghat}^\oplus \phi_\xi \, dm(\xi) = \int_{\Ghat_{\temp}}^\oplus \phi_\xi \, dm_{\temp}(\xi) + \sum_{\sigma \in \MhatSD} \int_{\Ical_\sigma}^\oplus \phi_{\sigma, s} \, dm_\sigma(s).
	\end{align}

	Finally, we record the following useful lemma on density of smooth compactly supported functions of a fixed $K$-type.
	Recall the projection operators which we obtain for any $L \in \{K, M\}$ and denote them by $\mathsf{P}_\tau$ for any $\tau \in \widehat{K}$ and $\mathsf{P}_\sigma$ for any $\sigma \in \Mhat$. We omit indicating the ambient representation as it is understood from context---in particular, we always view $L^2(\GmodGamma)$ as a right regular representation and $L^2(K)$ as a left regular representation.
	
	\begin{lem}
		\label{lem: compact dense in tau type}
		Let $\t \in \widehat{K}$ be a $K$-type. The subspace $\compactsmooth(\GmodGamma) \cap L^2(\GmodGamma)_\t $ is dense in $L^2(\GmodGamma)_\t$.
	\end{lem}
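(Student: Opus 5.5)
We will show that the projection $\mathsf{P}_\tau$ from \cref{eq:definition of P_tau} maps $\compactsmooth(\GmodGamma)$ into $\compactsmooth(\GmodGamma) \cap L^2(\GmodGamma)_\tau$. Since $\mathsf{P}_\tau$ is bounded, the density of $\compactsmooth(\GmodGamma)$ in $L^2(\GmodGamma)$ will then give the claim.

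First, fix $\phi \in L^2(\GmodGamma)_\tau$ and $\e > 0$. Because $\GmodGamma$ is a smooth orbifold with a Radon measure, namely the quotient of Haar measure, $\compactsmooth(\GmodGamma)$ is dense in $L^2(\GmodGamma)$. Lifting to $G$ makes this concrete: take $\Gamma$-invariant smooth functions whose support is compact modulo $\Gamma$, or equivalently apply smooth cutoffs and mollifiers on a fundamental domain. This yields $f \in \compactsmooth(\GmodGamma)$ with $\|f - \phi\|_{L^2} < \e$.

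Next, set $f_\tau := \mathsf{P}_\tau f = \dim(\tau)\int_K \overline{\chi_\tau(k)}\, f(\cdot\, k)\, dk$, which lies in $L^2(\GmodGamma)_\tau$. We check that $f_\tau$ is smooth with compact support.
\begin{itemize}
\item \emph{Compact support:} if $\supp f \subset C$ with $C$ compact, then $\supp f_\tau \subset CK$, which is compact because it is the image of $C \times K$ under the continuous action map.
\item \emph{Smoothness:} the map $(x,k) \mapsto \overline{\chi_\tau(k)} f(xk)$ is smooth on $\GmodGamma \times K$ and compactly supported. Differentiating under the integral sign over the compact group $K$ with its probability Haar measure is therefore legitimate, so $f_\tau$ is smooth. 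Locally this can be checked on $G$ with $\Gamma$-invariant lifts, which avoids any orbifold subtleties.
\end{itemize}

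Finally, $\mathsf{P}_\tau$ is an orthogonal projection, so it has operator norm $1$, and $\mathsf{P}_\tau \phi = \phi$ because $\phi$ is already of type $\tau$. Hence
\begin{align*}
\|f_\tau - \phi\|_{L^2} = \|\mathsf{P}_\tau (f - \phi)\|_{L^2} \leq \|f - \phi\|_{L^2} < \e .
\end{align*}
Since $\e > 0$ was arbitrary, this proves the lemma.

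There is no substantial obstacle. The only points needing care are the density of $\compactsmooth$ in $L^2$ on the orbifold quotient, which is standard, and the justification for differentiating under the integral sign; both are handled by the compactness of $K$ and of the support.
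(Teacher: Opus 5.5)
Your proposal is correct and follows the same argument as the paper. You approximate by an element of $\compactsmooth(\GmodGamma)$ and apply $\mathsf{P}_\tau$, which preserves smoothness and compact support. You then conclude using $\mathsf{P}_\tau\phi = \phi$ and $\|\mathsf{P}_\tau\| \leq 1$.
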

	
	\begin{proof}
		Let $\t \in \widehat{K}$ be a $K$-type, $\widetilde{\phi} \in L^2(\GmodGamma)_\t \subset L^2(\GmodGamma)$, and $\epsilon > 0$. Since $\compactsmooth(\GmodGamma) \subset L^2(\GmodGamma)$ is dense, there exists $\phi \in \compactsmooth(\GmodGamma)$ such that $\bigl\|\phi - \widetilde{\phi}\bigr\|_{L^2(\GmodGamma)} < \epsilon$. We now apply $\mathsf{P}_\t$. Firstly, since $\widetilde{\phi} \in L^2(\GmodGamma)_\t$, we have $\mathsf{P}_\t\widetilde{\phi} = \widetilde{\phi}$. Secondly, it follows from the definition of $\mathsf{P}_\t$ in \cref{eq:definition of P_tau} that $\mathsf{P}_\t\phi$ remains as a compactly supported smooth function. Therefore, applying $\mathsf{P}_\t$ gives $\mathsf{P}_\t\phi \in \compactsmooth(\GmodGamma) \cap L^2(\GmodGamma)_\t$ with
		\begin{align*}
			\bigl\|\mathsf{P}_\t \phi - \widetilde{\phi}\bigr\|_{L^2(\GmodGamma)} \leq \bigl\|\phi - \widetilde{\phi}\bigr\|_{L^2(\GmodGamma)} < \epsilon
		\end{align*}
		as desired.
	\end{proof}

	\section{Asymptotic expansion of matrix coefficients in terms of the Harish-Chandra C-function and Laplace transforms}
	\label{sec: Rep theoretic asymptotic expansion of matrix coefficients}
	
	In this section, we recount a purely representation theoretic asymptotic expansion of matrix coefficients of Edwards--Oh \cite{EdwardsOh}. We then use it to prove \cref{prop:Laplace transform rep theory side}, which is a general result on analytic continuations of Laplace transforms of matrix coefficients. Roughly speaking, combined with the results of the next section, \cref{prop:Laplace transform rep theory side} characterizes weak containment of quasi-complementary series within a given representation of $G$ in terms of failure of holomorphic extension of the Laplace transform to certain subsets of the negative real line corresponding to the set of parameters of the quasi-complementary series in question.
	
	\subsection{Intertwining operators, Eisenstein integrals, and the Harish-Chandra C-function}
	We define constants related to the intertwining operator introduced previously, and define other important operators.

	Let $\sigma \in \MhatSD$ and $s > d/2$. From the intertwining property in \cref{eqn: A intertwining property}, we know that $\mathcal{A}(\sigma, s)$ intertwines with $\lambda_K$. Consequently, by Schur's lemma, $\mathcal{A}(\sigma, s)$ acts as a scalar on each $K$-type of $L^2(K : \sigma)$. By the characterization of the $K$-types which occur in $\Ind_P^G(\sigma \otimes \chi_{s - d/2} \otimes \triv)$, we may write
	\begin{align*}
		\mathcal{A}(\sigma, s) = \sum_{\tau \in \widehat{K}: \tau \supset \sigma} a(\sigma, s, \tau) \mathsf{P}_\tau
	\end{align*}
	as operators on $L^2(K : \sigma)$, for some set of scalars $\{a(\sigma, s, \tau)\}_{\tau \in \widehat{K}: \tau \supset \sigma} \subset \C$. Positive semi-definiteness of the inner product $\langle\cdot, \cdot\rangle_{(\sigma, s)}$ implies that for all $\tau \in \widehat{K}$ containing $\sigma$, the scalars are real and non-negative. 
	We also deduce from the intertwining property that $(\lambda_K, \ker\mathcal{A}(\sigma, s))$ is a unitary representation of $K$. Note that using $\ker\mathcal{A}(\sigma, s) = \Zcal(\sigma, s)$, for all $K$-types $\tau \in \widehat{K}$ containing $\sigma$, we have
	\begin{align*}
		a(\sigma, s, \tau) = 0 \iff \tau \subset (\lambda_K, \ker\mathcal{A}(\sigma, s)).
	\end{align*}

	Let $\sigma \in \Mhat$. For each $K$-type $\tau \in \widehat{K}$ of $L^2(K : \sigma)$, we use any orthonormal basis $\{v_j\}_{j = 1}^{\dim(\sigma)\dim(\tau)} \subset L^2(K : \sigma)_\tau$ to define a corresponding special vector $\omega_\tau \in L^2(K : \sigma)_\tau$ defined by
	\begin{align*}
		\omega_\tau = \sum_{j = 1}^{\dim(\sigma)\dim(\tau)} \overline{v_j(e)}v_j.
	\end{align*}
	Then, for each vector $v \in L^2(K : \sigma)_\tau$, we have the identity
	\begin{align*}
		v(k) = \langle v, \lambda_K(k)\omega_\tau \rangle_{L^2(K)} \qquad \text{for all $k \in K$}.
	\end{align*}
	For any pair of $K$-types $\t_1, \t_2 \in \widehat{K}$, define an operator $\mathsf{T}_{\t_1}^{\t_2}: L^2(K : \sigma)_{\t_1} \to L^2(K : \sigma)_{\t_2}$ by
	\begin{align*}
		\mathsf{T}_{\t_1}^{\t_2}v = \int_M v(m) \lambda_K(m) \omega_{\t_2} \, dm \qquad \text{for all $v \in L^2(K : \sigma)_{\t_1}$}.
	\end{align*}
	These are called \emph{Eisenstein integrals}. Note that $\mathsf{T}_{\t_1}^{\t_2}$ intertwines with $\lambda_M$. In particular, for $\tau_1 = \tau_2 = \tau$, we have by Schur's lemma that $\mathsf{T}_\t^\t$ preserves $M$-types. Then, for any $\sigma \in \MhatSD$ and $s \in \Ical_\sigma$, the operator $\mathsf{T}_\t^\t$ preserves $\ker\mathcal{A}(\sigma, s)$ and hence descends to an operator on $\Ucal(\sigma, s)_\t$. We also have the following lemma.
	
	\begin{lem}[{\cite[Corollary 3.9]{EdwardsOh}}]
		\label{lem:T_tau as projection}
		Let $\sigma \in \Mhat$. Let $\tau \in \widehat{K}$ be a $K$-type of $L^2(K : \sigma)$. Then, we have
		\begin{align*}
			\mathsf{T}_\tau^\tau|_{L^2(K : \sigma)_\t} = \frac{\dim(\tau)}{\dim(\sigma)} \mathsf{P}_{\sigma^*}|_{L^2(K : \sigma)_\t}.
		\end{align*}
		When $s \in \Ical_\sigma$, the same holds on $\Ucal(\sigma, s)$.
	\end{lem}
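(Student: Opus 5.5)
The plan is to compute $\mathsf{T}_\tau^\tau$ directly on the reproducing kernel $\omega_\tau$ and identify it with a convolution by the character of $\sigma^*$ over $M$. First, I will rewrite the definition using $v(m) = \langle v, \lambda_K(m)\omega_\tau\rangle_{L^2(K)}$, which gives
\begin{align*}
	\mathsf{T}_\tau^\tau v = \int_M \langle v, \lambda_K(m)\omega_\tau\rangle_{L^2(K)}\, \lambda_K(m)\omega_\tau \, dm .
\end{align*}
Because this operator intertwines $\lambda_M$, Schur's lemma shows it acts as a scalar on each $M$-isotypic component of $L^2(K:\sigma)_\tau$. The branching rules from \cref{subsec: Unitary representation theory of SO} give multiplicity-one restrictions. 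So, once I fix a model $L^2(K:\sigma)_\tau \cong V_\tau \otimes \C^{\dim\sigma}$, where $\lambda_K$ acts on the first factor and $\rho_K|_M$ (type $\sigma$) on the second, the task reduces to two things: identifying which $M$-type survives, and computing one scalar.

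To identify the surviving $M$-type, I will use that $\omega_\tau(k) = \sum_j \overline{v_j(e)}\,v_j(k)$ is the kernel of evaluation at $e$. Under the model above, $v\mapsto v(e)$ is an $M$-equivariant map from $(\lambda_K|_M)$ to something built from $\rho_K|_M$ through $v(m) = v(e\cdot m)$ with left and right translation swapped. Concretely, I will check that $v(m) = (\lambda_K(m^{-1})v)(e)$ and $v(em)=(\rho_K(m)v)(e)$. On $L^2(K:\sigma)$ these identities force $m\mapsto v(m)$ to be a matrix coefficient of $\sigma$ in the right variable, and hence of $\sigma^*$ for the left $\lambda_K|_M$ action. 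Consequently, $\mathsf{T}_\tau^\tau v$ vanishes unless $v$ lies in the $\sigma^*$-isotypic part for $\lambda_K|_M$. I will verify this using Schur orthogonality: the integrand pairs $\lambda_K|_M$-matrix coefficients against a fixed $\sigma$-coefficient.

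For the scalar, I will compute the trace. Since $\mathsf{T}_\tau^\tau = c\,\mathsf{P}_{\sigma^*}$, taking traces gives $c\cdot \dim(\sigma)\cdot\dim(\sigma)$, because the $\sigma^*$-isotypic part of $V_\tau\otimes\C^{\dim\sigma}$ has dimension $\dim(\sigma)^2$ by multiplicity one. On the other hand, the trace equals $\int_M \|\lambda_K(m)\omega_\tau\|^2\,dm = \|\omega_\tau\|^2 = \omega_\tau(e) = \sum_j |v_j(e)|^2$. I will evaluate this by $K$-averaging: the function $k\mapsto \sum_j|v_j(k)|^2$ is constant in $k$ (by $\lambda_K$-invariance of the basis sum), so it equals $\int_K \sum_j |v_j|^2 = \dim(\sigma)\dim(\tau)$. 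This yields $c = \dim(\tau)/\dim(\sigma)$. Finally, for $s\in\Ical_\sigma$, the statement on $\Ucal(\sigma,s)$ follows because both sides preserve $\ker\mathcal{A}(\sigma,s)$ and therefore descend to the quotient.

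I expect the main obstacle to be the bookkeeping of left versus right $M$-actions and of duals, that is, pinning down that the surviving type is $\sigma^*$ rather than $\sigma$. I will settle this by testing the formula in the smallest case where $\sigma\neq\sigma^*$, namely $d=2$ with $M=\SO(2)$.
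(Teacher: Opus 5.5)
There is a genuine gap in the scalar step, and your trace computation depends on it. You assert that, because $\mathsf{T}_\tau^\tau$ intertwines $\lambda_K|_M$, Schur's lemma together with multiplicity-one branching makes it a scalar on each $M$-isotypic component. In your own model $L^2(K:\sigma)_\tau\cong V_\tau\otimes\C^{\dim\sigma}$, however, $\lambda_K(m)$ acts only on the first factor. So every $M$-type $\sigma'\subset\tau$ occurs in $(\lambda_K|_M, L^2(K:\sigma)_\tau)$ with multiplicity $\dim(\sigma)$, not one; multiplicity-one branching only controls $V_\tau|_M$. Intertwining with $\lambda_K|_M$ therefore only shows that $\mathsf{T}_\tau^\tau$ acts on the $\sigma^*$-isotypic part $(V_\tau)_{\sigma^*}\otimes\C^{\dim\sigma}$ as $\Id\otimes A$ for some $A\in\mathrm{End}(\C^{\dim\sigma})$. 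Your trace identity $\dim(\sigma)\tr(A)=\|\omega_\tau\|^2=\dim(\sigma)\dim(\tau)$ then determines $\tr(A)$ but not $A$. Your planned test case $d=2$ cannot expose this, since every irreducible representation of $M\cong\SO(2)$ is one-dimensional.

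The missing ingredient is that $\mathsf{T}_\tau^\tau$ also commutes with $\rho_K|_M$, which acts irreducibly, as $\sigma$, on the multiplicity factor. To see this, note that $\rho_K(m)\omega_\tau$ and $\lambda_K(m^{-1})\omega_\tau$ both lie in $L^2(K:\sigma)_\tau$ and both represent the functional $v\mapsto v(m^{-1})$, so they are equal. Then right invariance of $dm$ gives $\rho_K(m_0)\mathsf{T}_\tau^\tau v=\int_M v(m)\lambda_K(mm_0^{-1})\omega_\tau\,dm=\mathsf{T}_\tau^\tau\rho_K(m_0)v$.

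A cleaner packaging is the identity $\langle\mathsf{T}_\tau^\tau v,w\rangle_{L^2(K)}=\int_M v(m)\overline{w(m)}\,dm$. Thus $\mathsf{T}_\tau^\tau=R^*R$, where $R:L^2(K:\sigma)_\tau\to L^2(M)$ is restriction, which intertwines $\lambda_K|_M\times\rho_K|_M$ with $\lambda_M\times\rho_M$. As an $M\times M$-representation, $L^2(K:\sigma)_\tau\cong\bigoplus_{\sigma'\subset\tau}\sigma'\boxtimes\sigma$ is multiplicity-free, so Schur's lemma now does give a scalar on each summand. Your Schur-orthogonality argument then kills every summand with $\sigma'\neq\sigma^*$. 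Your trace computation, with probability Haar measures on $K$ and $M$, gives the scalar $\dim(\tau)/\dim(\sigma)$ on the $\sigma^*$-summand. With this repair, the descent to $\Ucal(\sigma,s)$ works as you describe. The paper itself gives no proof of this lemma; it quotes Edwards--Oh.
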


	\begin{definition}[Harish-Chandra C-Function]
		\label{def:HC C function}
		Let $\sigma \in \Mhat$. For all $s > d/2$, define a bounded operator $\mathsf{C}_+(s): L^2(K : \sigma) \r L^2(K : \sigma)$, by
		\begin{align*}
			\mathsf{C}_+(s) = \int_{\overline{N}} e^{-s H(\overline{n})} U^s(\k(\overline{n})^{-1}) \, d\overline{n}.
		\end{align*}
		The bounded operator-valued function given by $s\mapsto \mathsf{C}_+(s)$ is called the \emph{Harish-Chandra C-function}.
	\end{definition}
	
	Indeed, $\mathsf{C}_+(s)$ is a well-defined bounded operator for all $s > d/2$ since $U^s$ is unitary and
	\begin{align*}
		\int_{\overline{N}} e^{-s H(\overline{n})} \, d\overline{n} < +\infty
	\end{align*}
	by \cite[Corollary of Lemma 45]{Harish-Chandra} (see \cite[pp. 303]{Harish-Chandra} for an explicit formula in terms of the $\Gamma$ function). The Harish-Chandra C-function $\mathsf{C}_+(s)$ intertwines with $U^s|_M = \lambda_M$ (see the proof of \cite[Lemma 4.2]{EdwardsOh}). When $s \in \Ical_\sigma$, it descends as above to a bounded operator on $\Ucal(\sigma,s)$. By Schur's lemma we may record a slightly refined version of \cite[Lemma 4.2]{EdwardsOh}.
	
	\begin{lem}
		\label{lem:C-function scalar on M-types}
		Let $\sigma \in \Mhat$ and $s > d/2$. The Harish-Chandra C-function $\mathsf{C}_+(s)$ acts as a scalar on each irreducible $M$-submodule of $L^2(K : \sigma)$. In particular, they preserve the $M$- and $K$-types of $L^2(K : \sigma)$. Consequently, when $s \in \Ical_\sigma$, the same holds on $\Ucal(\sigma, s)$.
	\end{lem}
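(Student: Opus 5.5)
The plan is to prove first that $\mathsf{C}_+(s)$ commutes with the left regular action of $M$ on $L^2(K:\sigma)$. Schur's lemma then gives the scalar action. The statements about $K$-types and about $\Ucal(\sigma,s)$ follow afterwards.

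\emph{Step 1 (intertwining).} For $m \in M$ we conjugate the integrand. Since $M$ centralizes $A$ and normalizes $\overline{N}$, we have $m\overline{n}m^{-1} \in \overline{N}$. The Iwasawa components transform as
\begin{align*}
H(m\overline{n}m^{-1}) = H(\overline{n}), \qquad \kappa(m\overline{n}m^{-1}) = m\kappa(\overline{n})m^{-1}.
\end{align*}
To see this, write $m\overline{n}m^{-1} = m\kappa(\overline{n})a_{H(\overline{n})}n_{\overline{n}}m^{-1}$ and move $m^{-1}$ past $A$ and into $N$, which it normalizes. Conjugation by the compact group $M$ preserves the Haar measure $d\overline{n}$ on the unipotent group $\overline{N}$. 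Substituting $\overline{n} \mapsto m\overline{n}m^{-1}$ therefore yields
\begin{align*}
U^s(m)\,\mathsf{C}_+(s)\,U^s(m)^{-1} = \int_{\overline{N}} e^{-sH(\overline{n})}\, U^s\bigl(m\kappa(\overline{n})^{-1}m^{-1}\bigr)\, d\overline{n} = \mathsf{C}_+(s).
\end{align*}
Since $U^s|_K = \lambda_K$, this says $\mathsf{C}_+(s)$ commutes with $\lambda_M$. The integral converges in operator norm because $U^s(k)$ is unitary for $k\in K$ and $\int_{\overline{N}} e^{-sH(\overline{n})}\,d\overline{n}<\infty$ for $s>d/2$.

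\emph{Step 2 (Schur and multiplicity one).} The statement ``acts as a scalar on each irreducible $M$-submodule'' needs care, since Schur alone only gives scalars on isotypic pieces when multiplicity is one. I would decompose $L^2(K:\sigma)$ first into its $K$-types $L^2(K:\sigma)_\tau$. Each of these is $\tau^{\oplus\dim\sigma}$ under $\lambda_K$, where the multiplicity space comes from the right $M$-action.

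The key point is that $\mathsf{C}_+(s)$ commutes with the right action $\rho_K(m')$ for $m' \in M$. This holds because the integrand $U^s(k)=\lambda_K(k)$ acts by left translations. Consequently $\mathsf{C}_+(s)$ preserves the right-$M$-isotypic structure, so it acts on $L^2(K:\sigma)$ as an operator on each copy of $\tau$ restricted to $M$.

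I then invoke the branching laws of \cref{subsec: Unitary representation theory of SO}: the restriction $\tau|_M$ is multiplicity free. Hence every $\lambda_M$-intertwiner of $\tau$ is a scalar on each $M$-type. From this, $\mathsf{C}_+(s)$ preserves $M$-types and acts as scalars on irreducible $M$-submodules inside each $\tau\otimes(\text{multiplicity})$ block.

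That $\mathsf{C}_+(s)$ preserves $K$-types also needs checking, since it only commutes with $M$, not $K$. This is the main obstacle. The route I would take is to note that for fixed $k$, $\lambda_K(k)$ maps $L^2(K:\sigma)_\tau$ into itself, because $\lambda_K(k)$ commutes with the $K$-isotypic projection $\mathsf{P}_\tau$ (by conjugation invariance of $\chi_\tau$). Integrating over $\overline{N}$ then preserves $L^2(K:\sigma)_\tau$. Within it, the previous argument applies.

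\emph{Step 3 (descent).} For $s\in\Ical_\sigma$, the operator preserves $\ker\mathcal{A}(\sigma,s)$, since that kernel is a sum of $K$-types and the $K$-types are preserved. So $\mathsf{C}_+(s)$ descends to the quotient $\Ucal(\sigma,s)$ with the same scalar and type-preservation properties. Strictly, only the interior-case $K$-types matter; on the boundary we use the quotient description in \cref{cor: complementary series embedding}.
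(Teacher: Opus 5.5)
Your proof is correct and follows the paper's route: the paper cites the $M$-equivariance of $\mathsf{C}_+(s)$ from the proof of Edwards--Oh's Lemma~4.2 and then invokes Schur's lemma. Your additional observations fill in what that one-line argument leaves implicit. First, $\mathsf{C}_+(s)$ is an average of left translations $\lambda_K(k)$, so it commutes with $\mathsf{P}_\tau$ and with the right $M$-action, and therefore preserves each $L^2(K:\sigma)_\tau$. Second, multiplicity-freeness of $\tau|_M$ is what upgrades Schur's lemma to a scalar on each $M$-type inside a fixed $K$-type. One cosmetic point: the defining integral should be read in the strong sense, not the operator-norm sense.
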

	
	\subsection{Asymptotic expansion of matrix coefficients following Edwards--Oh}

	For $s \in (d/2, d]$, we set
	\begin{align}\label{eq:eta_s}
		\eta_s := \min\set{2s-d,1}>0.
	\end{align}
	
	We recall the following expansion of matrix coefficients due to Edwards and Oh.
	
	\begin{thm}[{\cite[Theorem 4.3]
			{EdwardsOh}}]
		\label{thm:matrix coeff expand}
		Let $I$ be a compact subset of $(d/2,d]$.
		Let $\sigma\in \Mhat$, $s \in \Ical_\sigma \cap I$, and $\t_1,\t_2\in \widehat{K}$ be $K$-types. For all $u\in L^2(K : \sigma)_{\t_1}$, $v \in L^2(K : \sigma)_{\t_2}$, and $t\geq 0$, we have
		\begin{align*}
			\langle \pi_{\sigma, s}(a_t) u, v \rangle_{L^2(K)}
			= e^{-(d-s)t} \langle \mathsf{T}_{\t_1}^{\t_2} \mathsf{C}_+(s) u,v \rangle_{L^2(K)}
			+ O_I \left(e^{-(d-s+\eta_s)t} \norm{\mathsf{T}_{\t_1}^{\t_2}}_{L^2(K)} \norm{u}_{L^2(K)} \norm{v}_{\Sob_K^1(L^2(K))}\right).
		\end{align*}
	\end{thm}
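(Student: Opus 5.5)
This is a cited result (Edwards--Oh, Theorem 4.3), so the plan reconstructs its proof in the compact picture. It uses the Iwasawa decomposition of $a_t$ acting on $K$ together with a change of variables to $\overline{N}$.

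\textbf{Step 1: write the matrix coefficient as an integral over $\overline{N}$.} Since $\pi_{\sigma,s}(a_t)u(k)=e^{-sH(a_{-t}k)}u(\kappa(a_{-t}k))$, we have
\[
\langle \pi_{\sigma,s}(a_t)u,v\rangle=\int_K e^{-sH(a_{-t}k)}u(\kappa(a_{-t}k))\overline{v(k)}\,dk .
\]
Use the identity $v(k)=\langle v,\lambda_K(k)\omega_{\tau_2}\rangle$ and the right $M$-equivariance of elements of $L^2(K:\sigma)$. Then parametrize $K/M$ (up to a null set) by $\overline{N}$ via $\overline{n}\mapsto\kappa(\overline{n})$, whose Jacobian is $e^{-dH(\overline{n})}$. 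Conjugating by $a_{-t}$ gives $a_{-t}\overline{n}a_t=\overline{n}_t$, which contracts to $e$ as $t\to+\infty$ (with our convention $\Ad(a_t)\overline{v}=e^{-t}\overline{v}$). After the substitution $\overline{n}\mapsto a_t\overline{n}a_{-t}$, the integrand acquires the factor $e^{-(d-s)t}e^{-sH(\overline{n})}$. The $u$-part becomes $U^s(\kappa(\overline{n})^{-1})u$ evaluated near $e$, and the $v$-part becomes $v$ evaluated at $\kappa(a_t\overline{n}a_{-t})$.

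\textbf{Step 2: extract the main term.} Replace $v(\kappa(a_t\overline{n}a_{-t}))$ by its value at the identity, averaged over $M$. This is exactly where $\mathsf{T}_{\tau_1}^{\tau_2}$ enters, through $\int_M v(m)\lambda_K(m)\omega_{\tau_2}\,dm$ paired against the $u$-part. The main term is then
\[
e^{-(d-s)t}\Bigl\langle \mathsf{T}_{\tau_1}^{\tau_2}\int_{\overline{N}}e^{-sH(\overline{n})}U^s(\kappa(\overline{n})^{-1})u\,d\overline{n},\,v\Bigr\rangle=e^{-(d-s)t}\langle\mathsf{T}_{\tau_1}^{\tau_2}\mathsf{C}_+(s)u,v\rangle.
\]
For the end of complementary series, $\pi_{\sigma,s}$ acts on a quotient. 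There I would lift to $U^s$ on $L^2(K:\sigma)$, use the intertwining operator $\mathcal{A}(\sigma,s)$, which is scalar on $K$-types, and note that all operators involved preserve $\ker\mathcal{A}$.

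\textbf{Step 3: bound the error.} The error is controlled by
\[
\int_{\overline{N}}e^{-sH(\overline{n})}\bigl|v(\kappa(a_t\overline{n}a_{-t}))-v(\kappa(e))\bigr|\,d\overline{n}.
\]
Since $d(\kappa(a_t\overline{n}a_{-t}),e)\ll\min(1,e^{-t}|\overline{n}|)$, the mean value theorem bounds the difference by $\|v\|_{\Sob_K^1}\min(1,e^{-t}|\overline{n}|)$, using that $v$ has a fixed $K$-type and so its sup norm is controlled by its Sobolev norm. Because $H(\overline{n})\sim 2\log|\overline{n}|$, we have $e^{-sH(\overline{n})}\asymp(1+|\overline{n}|^2)^{-s}$. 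Splitting the integral at $|\overline{n}|=e^t$ gives
\[
e^{-t}\int_{|\overline{n}|\le e^t}|\overline{n}|^{1-2s}\,d\overline{n}+\int_{|\overline{n}|>e^t}|\overline{n}|^{-2s}\,d\overline{n}\ll e^{-\min(1,2s-d)t}
\]
(with an extra $t$ if $2s-d=1$, absorbable by slightly shrinking the rate or by a finer estimate). Here $\eta_s=\min\{2s-d,1\}$, and uniformity for $s\in I$ follows because $I$ is compact in $(d/2,d]$.

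The main obstacle is making this estimate honest in operator form. One must keep track of the $\|\mathsf{T}_{\tau_1}^{\tau_2}\|\|u\|$ factor rather than sup norms of $u$, and one must handle the logarithmic borderline case $2s-d=1$ so that the rate is exactly $\eta_s$.
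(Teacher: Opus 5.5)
The paper does not prove this statement; it cites Edwards--Oh and only remarks that their argument runs verbatim on $L^2(K:\sigma)$ and on compact subsets of $(d/2,d]$. No quotient or lifting issue arises here, since the statement concerns $U^s$ on $L^2(K:\sigma)$ with the $L^2(K)$ pairing; the lifting is done later, in \cref{cor:matrix coeff expand}. Your route through $\overline{N}$ is the standard one. However, as written it does not give the stated bound, and the gap is exactly the step you flag.

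Estimating $|v(\kappa(a_t\overline{n}a_{-t}))-v(e)|$ pointwise requires sup norms. For a vector of $K$-type $\tau_2$, these are controlled by $\|v\|_{L^2(K)}$ only with a constant that grows with $\tau_2$, and Step~3 never estimates $u$. So at best you get an $O_{I,\tau_1,\tau_2}$ bound, not an $O_I$ constant times $\|\mathsf{T}_{\tau_1}^{\tau_2}\|\,\|u\|_{L^2(K)}\,\|v\|_{\Sob_K^1(L^2(K))}$. Note that the stated error forces the matrix coefficient to vanish identically when $\mathsf{T}_{\tau_1}^{\tau_2}=0$, which no pointwise estimate can detect.

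The missing idea is never to evaluate pointwise: keep the $M$-integral and apply the reproducing identity at the translated point, $\overline{v(km)}=\langle\lambda_K(m)\omega_{\tau_2},\lambda_K(k^{-1})v\rangle_{L^2(K)}$. Write $\langle U^s(a_t)u,v\rangle=\langle u,U^{d-s}(a_{-t})v\rangle$, $k=\kappa(\overline{n})m$, and $\overline{n}_t:=a_t\overline{n}a_{-t}$. Then $\kappa(a_tk)=\kappa(\overline{n}_t)m$ and $H(a_tk)=H(\overline{n}_t)+t-H(\overline{n})$, whence
\[
\langle U^s(a_t)u,v\rangle_{L^2(K)}=e^{-(d-s)t}\int_{\overline{N}}e^{-sH(\overline{n})}\bigl\langle\mathsf{T}_{\tau_1}^{\tau_2}\lambda_K(\kappa(\overline{n})^{-1})u,\;e^{-(d-s)H(\overline{n}_t)}\lambda_K(\kappa(\overline{n}_t)^{-1})v\bigr\rangle_{L^2(K)}\,d\overline{n}.
\]
Replacing the second slot by $v$ gives exactly $e^{-(d-s)t}\langle\mathsf{T}_{\tau_1}^{\tau_2}\mathsf{C}_+(s)u,v\rangle$. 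Cauchy--Schwarz bounds the difference by
\[
e^{-(d-s)t}\|\mathsf{T}_{\tau_1}^{\tau_2}\|\,\|u\|\int_{\overline{N}}e^{-sH(\overline{n})}\bigl\|e^{-(d-s)H(\overline{n}_t)}\lambda_K(\kappa(\overline{n}_t)^{-1})v-v\bigr\|\,d\overline{n}.
\]
Only the $L^2$ Lipschitz bound $\|\lambda_K(k)v-v\|\ll d_K(k,e)\|v\|_{\Sob_K^1(L^2(K))}$ enters, so everything reduces to your scalar integral with constants independent of the $K$-types.

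The second gap is the borderline $2s-d=1$, which you leave open. For $t\ge1$ and $s$ near $(d+1)/2$, the integral $\int_{\overline{N}}e^{-sH(\overline{n})}\min\{1,e^{-t}|\overline{n}|\}\,d\overline{n}$ is of size $\asymp e^{-\eta_s t}\min\{t,|1-(2s-d)|^{-1}\}$. So the constant it yields is not uniform on a compact $I$ containing $(d+1)/2$, and that point lies in $(d/2,d]$ for every $d\ge2$. ``Slightly shrinking the rate'' proves the estimate with $\eta_s-\epsilon$ uniformly on $I$, which is a weaker statement. That version would suffice for every use in this paper, since each leaves room for a strict inequality. Obtaining $\eta_s$ itself would require a cancellation in the term linear in $e^{-t}\overline{n}$, which a first-order Lipschitz bound with absolute values cannot see, and your outline does not supply it.

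There are also three smaller slips. Step~1 drops the factor $e^{-(d-s)H(\overline{n}_t)}$; this is harmless, since $0\le1-e^{-(d-s)H(\overline{n}_t)}\le\min\{1,(d-s)H(\overline{n}_t)\}$. Under the paper's convention $a_{-t}\overline{n}a_t$ expands rather than contracts. In Step~2, the function integrated over $M$ inside $\mathsf{T}_{\tau_1}^{\tau_2}$ is the $\tau_1$-vector $\mathsf{C}_+(s)u$, not $v$.
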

	
	\begin{remark}
		Although the original statement and proof of \cite[Theorem 4.3]
		{EdwardsOh} is for $\Ucal(\sigma, s)_{\t_1}$ and $\Ucal(\sigma, s)_{\t_2}$ for $\sigma \in \MhatSD$ and $s \in \interior\Ical_\sigma$, the proof for $L^2(K : \sigma)_{\t_1}$ and $L^2(K : \sigma)_{\t_2}$ goes through verbatim.
		
		Moreover, the uniformity of the implicit constant in loc. cit. is stated for compact subsets $I$ of the \textit{open} interval $(d/2,d)$; however, their proof shows that such uniformity continues to hold for compact subsets of the half-closed interval $(d/2,d]$.
	\end{remark}

	Specializing \cref{thm:matrix coeff expand} to the case $\tau_1=\tau_2=\tau$, we have the following corollary.

	\begin{cor} \label{cor:matrix coeff expand}
		Let $I$ be a compact subset of $(d/2,d]$.
		Let $\sigma\in \MhatSD$, and $s \in \Ical_\sigma \cap I$. Let $(\pi, \Hcal) = (\pi_{\sigma, s}, \Ucal(\sigma,s)) \widehat{\otimes} (\triv, \Mcal(\sigma,s))$ be a unitary representation of $G$, with $\Mcal(\sigma,s)$ a (possibly infinite dimensional) separable multiplicity space.
		Then, for any $K$-type $\t\in \widehat{K}$ containing $\sigma$, for all $\phi,\psi\in \Hcal_{\t}$ and $t\geq 0$, we have
		\begin{align}\label{eq: matrix coeffs }
			\langle \pi(a_t) \phi, \psi \rangle_{\Hcal}
			= e^{-(d-s)t} \langle \mathsf{T}_{\t}^{\t} \mathsf{C}_+(s) \phi,\psi \rangle_{\Hcal}
			+ O_{I,\tau} \left(e^{-(d-s+\eta_s)t}  \norm{\phi}_{\Hcal} \norm{\psi}_{\Hcal}\right).
		\end{align}
	\end{cor}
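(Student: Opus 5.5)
The plan is to reduce to the case of a single copy of $\Ucal(\sigma,s)$ and then lift to the ambient model $L^2(K:\sigma)$, where \cref{thm:matrix coeff expand} applies. First I will handle the multiplicity space. Since $K$ acts trivially on $\Mcal(\sigma,s)$, we have $\Hcal_\tau = \Ucal(\sigma,s)_\tau \widehat{\otimes} \Mcal(\sigma,s)$, and $\Ucal(\sigma,s)_\tau$ is finite dimensional. Choose an orthonormal basis $\{e_j\}$ of $\Mcal(\sigma,s)$ and write $\phi = \sum_j \phi_j \otimes e_j$ and $\psi = \sum_j \psi_j\otimes e_j$ with $\phi_j,\psi_j \in \Ucal(\sigma,s)_\tau$. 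Then
\begin{align*}
\langle \pi(a_t)\phi,\psi\rangle_\Hcal = \sum_j \langle \pi_{\sigma,s}(a_t)\phi_j,\psi_j\rangle_{\Ucal(\sigma,s)} .
\end{align*}
The operator $\mathsf{T}_\tau^\tau\mathsf{C}_+(s)$ acts on the first tensor factor, so the main terms also add up diagonally. For the error terms, Cauchy--Schwarz gives $\sum_j \|\phi_j\|\|\psi_j\| \le \|\phi\|_\Hcal\|\psi\|_\Hcal$. It therefore suffices to prove the estimate for $\Ucal(\sigma,s)$ itself, with a constant depending only on $I$ and $\tau$.

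\textbf{Interior case, $s\in\interior\Ical_\sigma$.} By \cref{cor: complementary series embedding}, $\Ucal(\sigma,s)$ sits inside $(U^s,L^2(K:\sigma))$, with inner product $\langle u,v\rangle_{\Ucal}=\langle \mathcal{A}(\sigma,s)u,v\rangle_{L^2(K)}$. On the $\tau$-isotypic piece, $\mathcal{A}(\sigma,s)$ is the scalar $a=a(\sigma,s,\tau)\ge 0$. Since $\pi_{\sigma,s}(a_t)u$ need not be $\tau$-isotypic, I will instead use unitarity:
\begin{align*}
\langle \pi(a_t)u,v\rangle_{\Ucal}=\langle \mathcal{A}\,U^s(a_t)u,v\rangle_{L^2(K)}=\langle U^s(a_t)u,\mathcal{A}v\rangle_{L^2(K)}=a\,\langle U^s(a_t)u,v\rangle_{L^2(K)} .
\end{align*}
The second equality uses self-adjointness of $\mathcal{A}$, and the third uses $v\in L^2(K:\sigma)_\tau$. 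Applying \cref{thm:matrix coeff expand} with $\tau_1=\tau_2=\tau$ gives the main term
\begin{align*}
a\,e^{-(d-s)t}\langle \mathsf{T}_\tau^\tau\mathsf{C}_+(s)u,v\rangle_{L^2(K)}=e^{-(d-s)t}\langle \mathsf{T}_\tau^\tau\mathsf{C}_+(s)u,v\rangle_{\Ucal},
\end{align*}
using that $\mathsf{T}_\tau^\tau\mathsf{C}_+(s)$ preserves the $\tau$-type (\cref{lem:C-function scalar on M-types}). The error term is
\begin{align*}
a\,e^{-(d-s+\eta_s)t}\|\mathsf{T}_\tau^\tau\|\,\|u\|_{L^2}\|v\|_{\Sob^1_K}\ll_\tau a\,\|u\|_{L^2}\|v\|_{L^2}= \|u\|_{\Ucal}\|v\|_{\Ucal}.
\end{align*}
Here I use \cref{lem: Sobolev bound} together with $\|\mathsf{T}_\tau^\tau\|=\dim\tau/\dim\sigma$ from \cref{lem:T_tau as projection}. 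The last equality holds because $\|u\|_{\Ucal}^2=a\|u\|_{L^2}^2$. This is the key point: the factor $a$ is absorbed exactly, so no uniform lower bound on $a(\sigma,s,\tau)$ over $I$ is needed.

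\textbf{Endpoint case, $s=d-\ell(\sigma)$.} Here \cref{cor: complementary series embedding} only gives a quotient. I will lift $u,v\in\Ucal(\sigma,s)_\tau$ to $\tilde u,\tilde v\in L^2(K:\sigma)_\tau$. This is possible because $\mathsf{P}_\tau$ commutes with the quotient map, so any lift can be projected to type $\tau$. The quotient inner product is again $\langle \mathcal{A}\cdot,\cdot\rangle_{L^2(K)}$, and it is well defined because $\ker\mathcal{A}$ is invariant. The same computation then applies to $\tilde u,\tilde v$. Since $\tau$ is a type of $\Ucal(\sigma,s)$, we have $a>0$, so $\|\tilde u\|^2_{\Ucal}=a\|\tilde u\|^2_{L^2}$ still holds. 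The main term is well defined on the quotient because $\mathsf{T}_\tau^\tau\mathsf{C}_+(s)$ preserves $\ker\mathcal{A}$.

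The main obstacle is this endpoint lifting. I need to check that the error bound is stated in the quotient norms, and I expect the identity $\|\cdot\|_{\Ucal}^2=a\|\cdot\|^2_{L^2}$ on $\tau$-types to handle it. I also need to confirm that the $O_I$ constant of \cref{thm:matrix coeff expand} remains uniform up to the closed endpoint of $I$; the remark following that theorem supplies this.
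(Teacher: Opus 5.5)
Your proposal is correct and follows essentially the same route as the paper. In both, you embed $\Ucal(\sigma,s)_\tau$ into $L^2(K:\sigma)_\tau$ (or, at the endpoint $s=d-\ell(\sigma)$, take arbitrary lifts there), and use that $\mathcal{A}(\sigma,s)$ acts by the scalar $a(\sigma,s,\tau)$ on the $\tau$-type to pass from \cref{thm:matrix coeff expand} to the $\Ucal(\sigma,s)$ inner product. You then absorb $a(\sigma,s,\tau)$ exactly into $\|u\|_{\Ucal(\sigma,s)}\|v\|_{\Ucal(\sigma,s)}$ via \cref{lem: Sobolev bound}, and sum over an orthonormal basis of $\Mcal(\sigma,s)$. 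The differences are cosmetic: you do the multiplicity reduction first and make the Cauchy--Schwarz summation of error terms explicit, and you invoke self-adjointness of $\mathcal{A}(\sigma,s)$ where the paper uses the identity $\langle u,v\rangle_{\Ucal(\sigma,s)}=a(\sigma,s,\tau)\langle u,v\rangle_{L^2(K)}$ for $u$ of type $\tau$.
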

	
	\begin{proof}
		Let $I$, $\sigma$, $s$, and $\tau$ be as in the lemma. We first obtain the desired result for the unitary representation $(\pi_{\sigma, s}, \Ucal(\sigma,s))$ of $G$, i.e., assuming $\Mcal(\sigma,s)$ is a $1$-dimensional multiplicity space. First, we treat the complementary series, i.e., the case $s \in \interior\Ical_\sigma$. This case is simpler since we may fix an embedding $(\pi_{\sigma, s}, \Ucal(\sigma, s)) \subset (U^s, L^2(K : \sigma)) \subset (U^s, L^2(K))$ by \cref{cor: complementary series embedding}. From \cref{eq:intertwining}, for any $u \in \Ucal(\sigma,s)_\t$ and $v \in \Ucal(\sigma,s)$, we have
		$$\langle u,v\rangle_{\Ucal(\sigma,s)}=a(\sigma,s,\t)\langle u,v\rangle_{L^2(K)}.$$
		Letting $u, v \in \Ucal(\sigma,s)_{\t}$, we can use \cref{thm:matrix coeff expand} and then the above identity to obtain
		\begin{align*}
			&\langle \pi_{\sigma,s}(a_t) u, v \rangle_{\Ucal(\sigma,s)}\\
			={}&a(\sigma,s,\tau) \langle \pi_{\sigma,s}(a_t) u, v \rangle_{L^2(K)}\\
			={}&e^{-(d-s)t} a(\sigma,s,\tau) \langle \mathsf{T}_{\t}^{\t} \mathsf{C}_+(s) u,v \rangle_{L^2(K)}
			+ O_{I} \left(e^{-(d-s+\eta_s)t} a(\sigma,s,\tau) \norm{\mathsf{T}_{\t}^{\t}}_{L^2(K)} \norm{u}_{L^2(K)} \norm{v}_{\Sob_K^1(L^2(K))}\right)\\
			={}&e^{-(d-s)t} \langle \mathsf{T}_{\t}^{\t} \mathsf{C}_+(s) u,v \rangle_{\Ucal(\sigma,s)}
			+ O_{I} \left(e^{-(d-s+\eta_s)t} a(\sigma,s,\tau) \norm{\mathsf{T}_{\t}^{\t}}_{L^2(K)} \norm{u}_{L^2(K)} \norm{v}_{\Sob_K^1(L^2(K))}\right).
		\end{align*}
		By \cref{lem: Sobolev bound}, there exists a constant $C(\t) > 0$ such that
		$\norm{v}_{\Sob_K^1(L^2(K))}\leq C(\t)\norm{v}_{L^2(K)}$.
		Since $\norm{\mathsf{T}_{\t}^{\t}}_{L^2(K)}$ is $O_\t (1)$ and $a(\sigma,s,\tau)\|v\|_{L^2(K)}^2=\|v\|^2_{\Ucal(\sigma,s)}$, we have the bound
		$$ a(\sigma,s,\tau) \norm{\mathsf{T}_{\t}^{\t}}_{L^2(K)} \norm{u}_{L^2(K)} \norm{v}_{\Sob_K^1(L^2(K))}
		\ll_\t \|u\|_{\Ucal(\sigma,s)}\|v\|_{\Ucal(\sigma,s)}.$$
		Thus, we get
		\begin{align}
			\label{eqn:matrix coeff expand mult 1}
			\langle \pi_{\sigma,s}(a_t) u, v \rangle_{\Ucal(\sigma,s)}
			&= e^{-(d-s)t} \langle \mathsf{T}_{\t}^{\t} \mathsf{C}_+(s) u,v \rangle_{\Ucal(\sigma,s)}
			+ O_{I,\t} \left(e^{-(d-s+\eta_s)t}   \norm{u}_{\Ucal(\sigma,s)} \norm{v}_{\Ucal(\sigma,s)}\right).
		\end{align}
		Now, we treat the ends of complementary series, i.e., the case $s = d - \ell(\sigma) \in \partial\Ical_\sigma$. In this case, although we do not have the same embedding as in the first case, we have $(\pi_{\sigma, s}, \Ucal(\sigma, s)) \subset (U^s, L^2(K : \sigma)/\ker\mathcal{A}(\sigma, s))$ from \cref{cor: complementary series embedding}. For any given $u, v\in \Ucal(\sigma,s)_{\t}$, we may take \emph{arbitrary} lifts $\tilde{u}, \tilde{v} \in L^2(K : \sigma)_{\t} \subset L^2(K)$ and carry out the same argument as above with $u$, $v$, $\langle \cdot, \cdot \rangle_{\Ucal(\sigma, s)}$, and $\| \cdot \|_{\Ucal(\sigma, s)}$, replaced with $\tilde{u}$, $\tilde{v}$, $\langle \cdot, \cdot \rangle_{(\sigma, s)}$, and $\| \cdot \|_{(\sigma, s)}$, respectively, to obtain the analog of \cref{eqn:matrix coeff expand mult 1} for $\tilde{u}$ and $\tilde{v}$. Finally, observe that \cref{eqn:matrix coeff expand mult 1} then holds for $u$ and $v$ since the inner products and norms descend to $\Ucal(\sigma, s)$ as we have $(U^s, \ker\mathcal{A}(\sigma, s)) = (U^s, \Zcal(\sigma, s))$ and is a subrepresentation of $(U^s, L^2(K : \sigma))$.

		Now, consider the more general unitary representation $(\pi, \Hcal) = (\pi_{\sigma, s}, \Ucal(\sigma,s)) \widehat{\otimes} (\triv, \Mcal(\sigma,s))$ of $G$, with $\Mcal(\sigma,s)$ a separable multiplicity space. Let $\{v_j\}_{j=1}^n$ for some $n \in \Z_{\geq 0} \sqcup \{\infty\}$ be a countable orthonormal Hilbert basis for $\Mcal(\sigma,s)$. Let $\phi, \psi \in \Hcal_\t$. They have decompositions $\phi=\sum_{i=1}^n \phi_i\otimes v_i$ and $\psi=\sum_{j=1}^n \psi_j\otimes v_j$, with $\phi_i, \psi_j\in \Ucal(\sigma,s)_\t$ and $\|\phi\|_\Hcal^2=\sum_{i=1}^n \|\phi_i\|_{\Ucal(\sigma,s)}^2 < +\infty$ and $\|\psi\|_\Hcal^2=\sum_{j=1}^n \|\psi_j\|_{\Ucal(\sigma,s)}^2 < +\infty$. Since $\{v_j\}_{j=1}^n$ is orthonormal and $G$ acts tivially on $\Mcal(\sigma,s)$ we have
		\begin{align*}
			\langle \pi(a_t)\phi, \psi \rangle_\Hcal
			&= \left\langle\sum_{i=1}^n \pi_{\sigma, s}(a_t)\phi_i \otimes v_i, \sum_{j=1}^n \psi_j \otimes v_j\right\rangle_\Hcal \\
			&= \sum_{i, j=1}^n \langle \pi_{\sigma, s}(a_t)\phi_i, \psi_j\rangle_{\Ucal(\sigma,s)} \cdot \langle v_i, v_j\rangle_\Mcal \\
			&=\sum_{j=1}^n \langle \pi_{\sigma, s}(a_t) \phi_j, \psi_j \rangle_{\Ucal(\sigma,s)}.
		\end{align*}
		The corollary now follows by applying \cref{eqn:matrix coeff expand mult 1} to each summand.
	\end{proof}

	The following proposition is essentially known in the literature, though it is difficult to pinpoint a reference dealing with the full quasi-complementary series. We provide a proof for the sake of completeness.

	\begin{prop}\label{prop:MatrixDecay}
		Let $s_\star\in [d/2, d)$.
		Let $(\pi, \Hcal)$ be a unitary representation of $G$ which does not weakly contain any quasi-complementary series $\Ucal(\sigma,s)$ with $\sigma \in \MhatSD$ and $s \in \Ical_\sigma \cap (s_\star, d]$. Then, for all $\epsilon>0$ and $K$-finite vectors $u,v\in \Hcal$, we have the following bound on matrix coefficients:
		\begin{align*}
			\langle \pi(a_t) u, v\rangle_\Hcal \ll_G (1 + t)e^{-(d - s_\star)t} \bigl(\dim \langle \pi(K)u\rangle \dim \langle \pi(K)v\rangle\bigr)^{1/2} \|u\|_{\Hcal} \|v\|_{\Hcal} \qquad \text{for all $t > 0$}.
		\end{align*}
	\end{prop}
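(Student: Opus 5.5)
We decompose $(\pi,\Hcal)$ as a direct integral over $\Ghat$ as in \cref{eq:definition of mu_upsilon}. By hypothesis and \cref{lem: m strong/weak containment}, each spectral measure $m_\sigma$ is supported in $\Ical_\sigma\cap(d/2,s_\star]$. Write $u=\int^\oplus u_\xi\,dm(\xi)$ and $v=\int^\oplus v_\xi\,dm(\xi)$ via \cref{eqn: measurable sections for direct integral}, so that
\[
\langle\pi(a_t)u,v\rangle_\Hcal=\int_{\Ghat}\langle\pi_\xi(a_t)u_\xi,v_\xi\rangle\,dm(\xi).
\]
The decomposition commutes with $K$, so $\dim\langle\pi_\xi(K)u_\xi\rangle\le\dim\langle\pi(K)u\rangle$ almost everywhere, and likewise for $v$. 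It therefore suffices to prove a pointwise bound for each irreducible $\xi$:
\[
|\langle\pi_\xi(a_t)u_\xi,v_\xi\rangle|\ll_G(1+t)e^{-(d-s_\star)t}\bigl(\dim\langle\pi_\xi(K)u_\xi\rangle\dim\langle\pi_\xi(K)v_\xi\rangle\bigr)^{1/2}\|u_\xi\|\|v_\xi\|.
\]
Integrating this and applying Cauchy--Schwarz in $L^2(m)$ gives the claim.

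For the pointwise bound we use the standard $K$-finite estimate of Cowling--Haagerup--Howe type. If every $K$-finite diagonal matrix coefficient of an irreducible $\xi$ lies in $L^{p+\epsilon}(G)$ for all $\epsilon>0$, then $K$-finite matrix coefficients are bounded by
\[
(\dim\dim)^{1/2}\|u\|\|v\|\,\Xi(g)^{2/p},
\]
where $\Xi$ is the Harish-Chandra function. More directly, we compare with the spherical function $\varphi_s$ of the spherical complementary series with parameter $s$, which satisfies $\varphi_s(a_t)\asymp_G(1+t)e^{-(d-s)t}$ at the endpoint $s=d/2$ and $\varphi_s(a_t)\asymp e^{-(d-s)t}$ for $s>d/2$. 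Concretely, take $s_\xi=d/2$ for tempered $\xi$ and $s_\xi=s$ for $\xi=\Ucal(\sigma,s)$. The integrability exponent of $\Ucal(\sigma,s)$ is determined by its leading exponent $d-s$, which can be read off from \cref{thm:matrix coeff expand}. Hence the matrix coefficients of $\Ucal(\sigma,s)$ lie in $L^{p+\epsilon}$ with $p=d/(d-s)$, because $\int e^{-p(d-s)t}e^{dt}\,dt$ converges exactly when $p(d-s)>d$. The Cowling--Haagerup--Howe theorem then gives a bound by $\Xi(a_t)^{2(d-s)/d}\ll(1+t)e^{-(d-s)t}$, using $\Xi(a_t)\asymp(1+t)e^{-dt/2}$. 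Since $s\le s_\star$, this is at most $(1+t)e^{-(d-s_\star)t}$. Tempered $\xi$ give $\Xi(a_t)\ll(1+t)e^{-dt/2}\le(1+t)e^{-(d-s_\star)t}$ because $s_\star\ge d/2$.

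The main obstacle is obtaining the $L^{p+\epsilon}$ integrability uniformly in $\sigma$ and $\tau$, since the constants in \cref{thm:matrix coeff expand} depend on the $K$-types. Only the qualitative integrability is needed, though. Cowling--Haagerup--Howe's theorem carries a universal constant, with the dependence on the vectors entering only through the $K$-dimensions, so non-uniformity in \cref{thm:matrix coeff expand} is harmless. For qualitative integrability of each irreducible $\Ucal(\sigma,s)$, we use the asymptotics of \cref{thm:matrix coeff expand}, or alternatively the embedding into $U^s$ on $L^2(K:\sigma)$ with the standard bound $|\langle U^s(a_t)u,v\rangle|\le\|u\|_\infty\|v\|_\infty\varphi_{s}(a_t)$ for continuous $u,v$. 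For the ends of complementary series we lift vectors as in the proof of \cref{cor:matrix coeff expand}.

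Finally, passing from diagonal to general $K$-finite coefficients and handling the $K$-type dimensions is the content of Cowling--Haagerup--Howe. The measurability of the pointwise bound in $\xi$ is routine.
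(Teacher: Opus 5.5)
Your proposal is correct and follows essentially the paper's route. You use Cowling--Haagerup--Howe for the tempered part. You dominate $K$-finite coefficients of each $\Ucal(\sigma,s)$ with $s\le s_\star$ by $\varphi_s$ (your $\|u\|_\infty\|v\|_\infty\varphi_s$ bound, which is Knapp's Prop.~7.14 as used in the paper) to get qualitative $L^{p+\epsilon}$-integrability. You then convert strong $L^p$ into a $\Xi^{2/p}$ bound with constant depending only on $G$.

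The only structural difference is that the paper applies this conversion (Shalom's Theorem~2.1, conditions 2$\Leftrightarrow$4) directly to $\Hcal_{\qcomp}$ with the single exponent $p=d/(d-s_\star)$. You instead apply it irreducible by irreducible and integrate over the direct integral via Cauchy--Schwarz.

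One correction of attribution: for non-even $p$ the $\Xi^{2/p}$ bound does not follow from Cowling--Haagerup--Howe alone. The tensor-power trick only gives $\Xi^{1/\lceil p/2\rceil}$, which is too weak for the exponent $d-s$, so that step should be credited to \cite{Shalom}, as in the paper.
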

	
	\begin{proof}
		In the proof, we will encounter the spherical functions $\varphi_s: G \to \R$ for any $s \in \R$ defined by
		\begin{align*}
			\varphi_s(g) = \int_K e^{-s H(g^{-1}k)} \, dk \qquad \text{for all $g \in G$}.
		\end{align*}
		They coincide with the matrix coefficient of $(U^s, L^2(K : \triv))$ corresponding to $\mathbf{1}_K \in L^2(K : \triv)$ (or corresponding to any $K$-invariant unit vector in $\Ucal(\triv, s)$; see \cite[Lemma 5]{Harish-Chandra}).
		The particular spherical function $\Xi := \varphi_{d/2}$ is called the \emph{Harish-Chandra function}. We refer the reader to \cite[Chapter VII, \S 8]{Knapp} and \cite[Chapter 4, \S 4.6]{GangolliVaradarajan} for further details on spherical functions. In particular, we shall use their bounds which are originally due to Harish-Chandra \cite[Theorem 3]{Harish-Chandra}.
		
		Let $s_\star$ and $(\pi, \Hcal)$ be as in the proposition. By definition, we have the orthogonal decomposition $\Hcal = \Hcal_{\temp} \oplus \Hcal_{\qcomp}$ into the tempered part $\Hcal_{\temp}$ and the non-tempered part $\Hcal_{\qcomp}$. We treat each part separately.

		We first treat the tempered part. Let $u, v \in \Hcal_{\temp}$ be $K$-finite vectors. Then, by a result of Cowling--Haagerup--Howe \cite[Corollary]{CowlingHaagerupHowe} and bounds on the Harish-Chandra function, we have
		\begin{align}
			|\langle \pi(a_t)u, v\rangle_{\Hcal}| &\leq \Xi(a_t) \bigl(\dim\langle\pi(K)u\rangle \dim\langle\pi(K)v\rangle\bigr)^{1/2} \|u\|_{\Hcal} \|v\|_{\Hcal} \\
			\label{eqn: matrix coeff bound temp}
			&\ll_G (1 + t)e^{-(d/2)t} \bigl(\dim\langle\pi(K)u\rangle \dim\langle\pi(K)v\rangle\bigr)^{1/2} \|u\|_{\Hcal} \|v\|_{\Hcal} \qquad \text{for all $t > 0$}.
		\end{align}

		Now we treat the non-tempered part. By the classification result quoted in \cref{thm: non-tempered irreps}, only the quasi-complementary series can be weakly contained in $\Hcal_{\qcomp}$. We first establish \cref{eqn: matrix coeff bound comp} for all quasi-complementary series $\Ucal(\sigma, s)$ with parameters $\sigma \in \MhatSD$ and $s \in \Ical_\sigma \smallsetminus (s_\star, d]$. As the ends of complementary series can be handled using a lifting argument as in the proof of \cref{cor:matrix coeff expand}, we restrict our attention to the complementary series. To this end, let $\sigma \in \MhatSD$ and $s \in \interior\Ical_\sigma \smallsetminus (s_\star, d]$, and let $u, v \in \Ucal(\sigma, s)$ be $K$-finite vectors. By \cite[Chapter VII, \S 8, Proposition 7.14]{Knapp}, we have
		\begin{align*}
			\bigl|\langle \pi(a_t)u, v\rangle_{L^2(K)}\bigr| \ll_{u, v} \varphi_s(a_t).
		\end{align*}
		Define the subset
		\begin{align*}
			\mathcal{T} := \bigl\{\t \in \widehat{K}: \t \supset \sigma, \mathsf{P}_\tau(u) \neq 0, \mathsf{P}_\tau(v) \neq 0\bigr\} \subset \widehat{K}.
		\end{align*}
		By $K$-finiteness of $u$ and $v$, the subset $\mathcal{T}$ is finite. Converting to the unitary inner product $\langle \cdot, \cdot \rangle_{\Ucal(\sigma, s)}$ on $\Ucal(\sigma, s)$ and again using bounds on spherical functions, we have
		\begin{align}
			\bigl|\langle \pi_{\sigma, s}(a_t)u, v\rangle_{\Ucal(\sigma, s)}\bigr| &\ll_{u, v} \#\mathcal{T} \cdot \max_{\tau \in \mathcal{T}} a(\sigma, s, \t) \cdot \varphi_s(a_t) \\
			\label{eqn: matrix coeff bound comp}
			&\ll_{u, v, s} (1 + t)e^{-(d - s)t} \qquad \text{for all $t > 0$}.
		\end{align}
		
		Combining \cref{eqn: matrix coeff bound comp} with the fact from \cite[Chapter I, \S 5, Theorem 5.8]{Helgason} that the Haar measure on $G$ is given by
		\begin{align*}
			dg = \mu_M(M)^{-1} \sinh(t)^d \, dk \, dt \, dk
		\end{align*}
		and
		using the asymptotics
		$\sinh(t)^d\asymp e^{dt}$ for $t\gg 1$,
		we conclude that $\Ucal(\sigma, s)$ is strongly $L^p$ for $p := \frac{d}{d - s_\star}$ (in the terminology of Shalom \cite{Shalom}) for any $\sigma \in \MhatSD$ and $s \in \Ical_\sigma \smallsetminus (s_\star, d]$, i.e., the matrix coefficient $G \to \C$ given by $g \mapsto \langle \pi(g)u, v\rangle_{\Hcal}$ is in $L^{p + \epsilon}(G)$ for all $\epsilon > 0$.
		Since this holds true for all elements of $\Ghat$ weakly contained in $\Hcal_{\qcomp}$, the claimed bound of the proposition on matrix coefficients of $\Hcal_{\qcomp}$ follows by
		the equivalence of condition~2 and condition~4 in \cite[Theorem 2.1]{Shalom} and the fact that $d/p=d-s_\star$. Combined with the bounds on matrix coefficients of $\Hcal_{\temp}$ in \cref{eqn: matrix coeff bound temp}, this completes the proof of the proposition.
	\end{proof}

	We will now use the gathered tools to prove the following main proposition of this section.

	\begin{prop}
		\label{prop:Laplace transform rep theory side}
		Let $\d_\Hcal\in (d/2,d]$ and let $(\pi,\Hcal)$ be a unitary representation of $G$ which does not weakly contain any quasi-complementary series $\Ucal(\s,s)$ with parameters $\s\in \MhatSD$ and $s\in \Ical_\s\cap (\d_\Hcal,d]$.

		Let $\t\in \widehat{K}$ be a $K$-type.
		For all $\phi,\psi\in \Hcal_\t$, define the Laplace transform of the real and imaginary parts of the scaled matrix coefficient $F_\spadesuit: \{z \in \C: \Re(z) > 0\} \to \C$ by
		\begin{align*}
			F_\spadesuit(z) = \int_0^{+\infty} e^{-(z+\d_\Hcal - d)t} \spadesuit(\langle \pi(a_t)\phi , \psi \rangle_\Hcal) \,dt \qquad \text{for all $z\in \C$ with $\Re(z)>0$},
		\end{align*}
		for $\spadesuit \in \{\Re, \Im\}$. Let  $\eta_0=\min\{\d_\Hcal-d/2, 1\}$. Then, both $F_\Re$ and $F_\Im$ admit holomorphic extensions to
		$\set{z \in \C: \Re(z)>-\eta_0} \smallsetminus (-\eta_0,0]$.
		Moreover, on the half-plane $\{z \in \C: \Re(z)>-\eta\}$ for any fixed $\eta < \eta_0$, we may write
		\begin{align*}
			F_\spadesuit=A_\spadesuit+B_\spadesuit
		\end{align*}
		where $A_\spadesuit$ is a holomorphic function and $B_\spadesuit$ is a meromorphic function defined by
		\begin{align*}
			B_\spadesuit(z) = \sum_{\sigma \in \MhatSD: \sigma \subset \t} \int_{[\d_\Hcal-\eta,\d_\Hcal]}
			\frac{\spadesuit\bigl(\langle \mathsf{T}_\t^\t \mathsf{C}_+(s) \phi_{\sigma,s},\psi_{\sigma,s}\rangle_{\Hcal(\sigma,s)}\bigr)}{z + \d_\Hcal - s} \,dm_\sigma(s) \qquad \text{for all $z \in \C$ with $\Re(z) > -\eta$},
		\end{align*}
		for $\spadesuit \in \{\Re, \Im\}$.
		Moreover, $A_\Re$ and $A_\Im$ are uniformly bounded on the half-plane $\{z \in \C: \Re(z)\geq -\eta_1\}$ for any $\eta_1<\eta$.
	\end{prop}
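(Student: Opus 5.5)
We split the matrix coefficient according to the spectral decomposition \cref{eq:definition of mu_upsilon}. Fix $\eta<\eta_0$ and write $\phi = \phi' + \phi''$, $\psi = \psi'+\psi''$. Here $\phi'',\psi''$ are the components of $\phi,\psi$ supported on $\bigsqcup_{\sigma}\Ical_\sigma\cap[\d_\Hcal-\eta,\d_\Hcal]$ (the spectral projection onto that Borel set of $\Ghat_{\qcomp}$), and $\phi',\psi'$ lie in the orthogonal complement. Both pieces are $G$-invariant subrepresentations, so the matrix coefficient splits as $\langle\pi(a_t)\phi',\psi'\rangle+\langle\pi(a_t)\phi'',\psi''\rangle$, and both pieces remain of $K$-type $\tau$. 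Only $\sigma\subset\tau$ contribute, since $\Hcal(\sigma,s)_\tau=0$ otherwise by Frobenius reciprocity. This is a finite set of $\sigma$, because $\tau$ restricted to $M$ has finitely many constituents.

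\emph{The $\phi'$ part.} The subrepresentation $\Hcal'$ does not weakly contain any $\Ucal(\sigma,s)$ with $s\in(\d_\Hcal-\eta,d]$. Indeed, its spectral measure vanishes on the open set $\Ical_\sigma\cap(\d_\Hcal-\eta,\d_\Hcal]$ (the closed set $[\d_\Hcal-\eta,\d_\Hcal]$ was removed), and by hypothesis nothing above $\d_\Hcal$ is present. Applying \cref{prop:MatrixDecay} with $s_\star=\d_\Hcal-\eta$ gives
\[
\bigl|\langle\pi(a_t)\phi',\psi'\rangle\bigr|\ll_\tau (1+t)e^{-(d-\d_\Hcal+\eta)t}\|\phi\|\|\psi\|.
\]
If $\d_\Hcal-\eta<d/2$ we use $s_\star=d/2$ instead, which is covered since $\eta<\d_\Hcal-d/2$. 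Taking $\Re$ or $\Im$ and forming the Laplace transform, the integrand is $O((1+t)e^{-(\Re z+\eta)t})$. It therefore defines a holomorphic function on $\Re z>-\eta$, uniformly bounded on $\Re z\ge-\eta_1$ for any $\eta_1<\eta$. This function goes into $A_\spadesuit$.

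\emph{The $\phi''$ part.} Here
\[
\langle\pi(a_t)\phi'',\psi''\rangle=\sum_{\sigma\subset\tau}\int_{[\d_\Hcal-\eta,\d_\Hcal]}\langle\pi_{\sigma,s}(a_t)\phi_{\sigma,s},\psi_{\sigma,s}\rangle\,dm_\sigma(s),
\]
and each fibre is isotypic of the form covered by \cref{cor:matrix coeff expand}. Take $I=[\d_\Hcal-\eta,d]$, a compact subset of $(d/2,d]$. Each integrand then equals
\[
e^{-(d-s)t}\langle\mathsf{T}_\tau^\tau\mathsf{C}_+(s)\phi_{\sigma,s},\psi_{\sigma,s}\rangle+O\bigl(e^{-(d-s+\eta_s)t}\|\phi_{\sigma,s}\|\|\psi_{\sigma,s}\|\bigr),
\]
with constants uniform in $s\in I$. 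On $I$ we have $\eta_s\ge\min\{2(\d_\Hcal-\eta)-d,1\}$, which must beat $\eta$; this is the delicate bookkeeping, and it holds when $\eta<\eta_0$ is suitably arranged (if needed, shrink the interval or iterate the split). Consequently $d-s+\eta_s\ge d-\d_\Hcal+\eta'$ for some $\eta'>\eta$.

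We then integrate over $s$ using Cauchy--Schwarz, $\int\|\phi_{\sigma,s}\|\|\psi_{\sigma,s}\|\,dm_\sigma\le\|\phi\|\|\psi\|$. The error terms give a Laplace transform holomorphic and bounded on $\Re z\ge-\eta_1$, and these also go into $A_\spadesuit$. For the main terms, we interchange the $t$- and $s$-integrals by Fubini, which is justified for $\Re z>0$ because $\d_\Hcal\ge s$. We use $\int_0^\infty e^{-(z+\d_\Hcal-s)t}\,dt=(z+\d_\Hcal-s)^{-1}$. Taking real and imaginary parts commutes with the $s$-integral, and the $\Re/\Im$ is taken inside because the $t$-integral's kernel is real for real $z$; note that $\spadesuit$ is applied before the Laplace transform, so this works since $e^{-(d-s)t}$ is real. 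The result is exactly $B_\spadesuit(z)$ for $\Re z>0$.

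\emph{Continuation.} The density $s\mapsto\spadesuit\langle\mathsf{T}_\tau^\tau\mathsf{C}_+(s)\phi_{\sigma,s},\psi_{\sigma,s}\rangle$ is in $L^1(m_\sigma)$, being bounded by $\|\mathsf{C}_+(s)\|\,\|\mathsf{T}_\tau^\tau\|\,\|\phi_{\sigma,s}\|\|\psi_{\sigma,s}\|$ with $\|\mathsf{C}_+(s)\|$ bounded on $I$. Hence $B_\spadesuit$ is holomorphic off the set $\{s-\d_\Hcal: s\in[\d_\Hcal-\eta,\d_\Hcal]\}=[-\eta,0]$. This gives the claimed decomposition on $\Re z>-\eta$. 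Letting $\eta\uparrow\eta_0$ and using uniqueness of analytic continuation from $\Re z>0$ yields the holomorphic extension of $F_\spadesuit$ to $\{\Re z>-\eta_0\}\smallsetminus(-\eta_0,0]$.

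The main obstacle is the uniformity in the $\phi''$ step: we need constants in \cref{cor:matrix coeff expand} uniform in $s$, and the exponent gap $\eta_s$ must exceed the strip width. I would first try to handle this by restricting to $\eta$ close to $\eta_0$ but strictly smaller and checking $\min\{2s-d,1\}>\eta$ for $s\ge\d_\Hcal-\eta$. If that fails near $d/2$, I would push the part of the spectrum with small $\eta_s$ into the $\phi'$ piece.
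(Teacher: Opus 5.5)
Your architecture is the same as the paper's. You split off the spectral piece over $[\d_\Hcal-\eta,\d_\Hcal]$, bound the complement by \cref{prop:MatrixDecay} with $s_\star=\d_\Hcal-\eta$, expand the remaining fibres with \cref{cor:matrix coeff expand}, and use Fubini to produce $B_\spadesuit$. The gap is in the error term, at exactly the step you flag as delicate. You discard the factor $e^{-(\d_\Hcal-s)t}$ by bounding $d-s+\eta_s\ge d-\d_\Hcal+\eta_s$, and then demand $\eta_s>\eta$ on all of $[\d_\Hcal-\eta,\d_\Hcal]$. Near the left endpoint $\eta_s=2s-d$, so this forces $2(\d_\Hcal-\eta)-d>\eta$, i.e.\ $\eta<\tfrac23(\d_\Hcal-d/2)$. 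That is strictly weaker than $\eta<\eta_0$ whenever $\d_\Hcal-d/2<3/2$. For instance, with $\d_\Hcal-d/2=0.3$ and $\eta=0.25$ you get $\eta_s=0.1<\eta$ at $s=\d_\Hcal-\eta$.

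Your proposed remedies do not close this. Shrinking $\eta$ proves a weaker statement, and letting $\eta$ increase to your threshold only gives a continuation to $\Re z>-\min\{\tfrac23(\d_\Hcal-d/2),1\}$. Moving the low-$s$ part of $[\d_\Hcal-\eta,\d_\Hcal]$ into the complement fails for two reasons. It changes $B_\spadesuit$ away from the stated formula. It also destroys the $e^{-(d-\d_\Hcal+\eta)t}$ bound for $\phi'$, because \cref{prop:MatrixDecay} only gives that rate when the complement carries no spectrum above $\d_\Hcal-\eta$.

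The missing observation is that you should not throw away $\d_\Hcal-s$: a smaller $s$ gives a smaller $\eta_s$ but a correspondingly larger gap $\d_\Hcal-s$, and the two trade off. The Laplace transform of the error is controlled by $\int_0^\infty e^{-(\Re z+\d_\Hcal-s+\eta_s)t}\,dt$. Split $I_\eta$ at $(d+1)/2$, as the paper does:

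\begin{itemize}
\item For $s>(d+1)/2$ one has $\eta_s=1$ and $\d_\Hcal-s+1\ge 1$.
\item For $s\le(d+1)/2$ one has $\d_\Hcal-s+\eta_s=\d_\Hcal+s-d\ge 2\d_\Hcal-d-\eta$. This exceeds $\eta$ precisely because $\eta<\d_\Hcal-d/2$.
\end{itemize}

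This yields the bound $\bigl((\Re z+2\d_\Hcal-d-\eta)^{-1}+(\Re z+1)^{-1}\bigr)\|\phi\|\|\psi\|$. It is holomorphic on $\Re z>-\eta$ and uniformly bounded on $\Re z\ge-\eta_1$. Equivalently, your inequality $d-s+\eta_s\ge d-\d_\Hcal+\eta'$ does hold for every $\eta<\eta_0$, with $\eta'=\min\{1,2\d_\Hcal-d-\eta\}>\eta$; it just does not follow via $\eta_s>\eta$. With this in place the rest of your argument goes through, including the continuation to $\{\Re z>-\eta_0\}\smallsetminus(-\eta_0,0]$. (The case $\d_\Hcal-\eta<d/2$ you mention never occurs.)
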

	
	\begin{proof}
		Assume the hypotheses of the proposition. We give the proof for $F_\Re$ as the proof for $F_\Im$ is identical.
		Recall the direct integral decomposition and the spectral measures $\{m_\sigma\}_{\sigma \in \MhatSD}$ given in \cref{eq:definition of mu_upsilon}.
		For any $\eta\in (0,\d_\Hcal-d/2)$, define $I_\eta:=[\d_\Hcal-\eta,\d_\Hcal]$ and the Hilbert subspace
		$$\Hcal[\eta]=\Hilbertoplus_{\sigma\in \MhatSD}\int_{I_\eta}^{\oplus} \Hcal(\sigma,s) \, dm_\sigma(s) \qquad \subset \Hcal.$$
		Then, $\Hcal=\Hcal[\eta]\oplus\Hcal[\eta]^\bot$ is an orthogonal decomposition into subrepresentations, and for any $\varphi\in \Hcal$, denote by $\varphi_\eta \in \Hcal[\eta]$ and $\varphi_\eta^\bot \in \Hcal[\eta]^\bot$ its projections to $\Hcal[\eta]$ and $\Hcal[\eta]^\bot$, respectively. By orthogonality of these spaces, we have
		$$\langle \pi(a_t)\phi , \psi \rangle_{\Hcal}=\langle \pi(a_t) \phi_\eta , \psi_\eta \rangle_{\Hcal}+\bigl\langle \pi(a_t) \phi_\eta^\bot , \psi_\eta^\bot \bigr\rangle_{\Hcal} \qquad \text{for all $t \in \R$},$$
		and taking real parts, we have
		$$\Re(\langle \pi(a_t)\phi , \psi \rangle_{\Hcal})=\Re(\langle \pi(a_t)\phi_\eta , \psi_\eta \rangle_{\Hcal})+\Re\bigl(\bigl\langle \pi(a_t)\phi_\eta^\bot , \psi_\eta^\bot \bigr\rangle_{\Hcal}\bigr) \qquad \text{for all $t \in \R$}.$$
		Using the above decomposition we get a corresponding decomposition $F_\Re(z)=F_{\Re,\eta}(z)+F_{\Re,\eta}^\bot(z)$ where
		\begin{align*}
			F_{\Re,\eta}(z) = \int_0^{+\infty} e^{-(z+\d_\Hcal- d)t} \Re(\langle \pi(a_t)\phi_\eta , \psi_\eta \rangle_{\Hcal}) \,dt
		\end{align*}
		and similarly for $F^\bot_{\Re,\eta}(z)$. By \cref{prop:MatrixDecay} we have the bound
		$$\bigl|\Re\bigl(\bigl\langle \pi(a_t)\phi_\eta^\bot , \psi_\eta^\bot \bigr\rangle_{\Hcal}\bigr)\bigr| \leq \bigl|\bigl\langle \pi(a_t)\phi_\eta^\bot , \psi_\eta^\bot \bigr\rangle_{\Hcal}\bigr|\ll_{\phi, \psi} (t+1)e^{-(d-\d_\Hcal+\eta)t} \qquad \text{for all $t > 0$}.$$
		Thus, for $\Re(z)>-\eta$, the integral defining $F^\bot_{\Re,\eta}(z)$ converges absolutely:
		$$\bigl|F^\bot_{\Re,\eta}(z)\bigr| \leq \int_0^{+\infty} \bigl|e^{-(z+\d_\Hcal- d)t}  \Re\bigl(\bigl\langle \pi(a_t)\phi_\eta^\bot , \psi_\eta^\bot \bigr\rangle_{\Hcal}\bigr)\bigr| \,dt\ll \int_0^{+\infty} (t+1)e^{-(\Re(z)+\eta)t} \,dt < +\infty,$$
		and hence defines a holomorphic function that is uniformly bounded on $\{z \in \C: \Re(z)\geq -\eta_1\}$, for any $\eta_1 <\eta$.
		
		Next, we treat $F_{\Re,\eta}(z)$. Note that for any $\varphi \in \Hcal_\tau$ of the fixed $K$-type $\tau$, we have that for all $\sigma \in \MhatSD$ and $m_\sigma$-almost all $s\in I_\eta$, the component $\varphi_{\sigma,s}$ is also of $K$-type $\tau$. Since $\Ucal(\sigma,s)_\tau$ is non-trivial only if $\sigma\subset \tau$, we get
		$$\varphi_{\eta}=\sum_{\sigma \in \MhatSD: \sigma \subset \t}\int_{I_\eta}^\oplus \varphi_{\sigma,s} \, dm_\sigma(s) \qquad \in \Hcal[\eta]_\tau.$$
		Using such a decomposition for $\phi$ and $\psi$, we have
		$$\langle \pi(a_t)\phi_{\eta},\psi_{\eta}\rangle_\Hcal=\sum_{\sigma \in \MhatSD: \sigma \subset \t}\int_{I_\eta} \langle  \pi_{\sigma,s}(a_t)\phi_{\sigma,s}, \psi_{\sigma,s}\rangle_{\Hcal(\sigma,s)} \, dm_\sigma(s) \qquad \text{for all $t \in \R$}.$$
		For the compact interval $I_\eta=[\d_\Hcal-\eta,\d_\Hcal] \subset (d/2,d]$ and each of the finitely many $\sigma \in \MhatSD$ with $\sigma\subset \t$, we use \cref{cor:matrix coeff expand} to estimate for all $s \in I_\eta$ and $t \geq 0$ that
		\begin{align*}
			\langle \pi_{\sigma,s}(a_t) \phi_{\sigma,s}, \psi_{\sigma,s} \rangle_{\Hcal(\sigma,s)}
			= e^{-(d-s)t} \langle \mathsf{T}_{\t}^{\t} \mathsf{C}_+(s) \phi_{\sigma,s},\psi_{\sigma,s} \rangle_{\Hcal(\sigma,s)}
			+ O_{I_\eta,\tau} \left(e^{-(d-s+\eta_s)t}  \norm{\phi_{\sigma,s}}_{\Hcal(\sigma,s)} \norm{\psi_{\sigma,s}}_{\Hcal(\sigma,s)}\right).
		\end{align*}
		Hence, taking real parts and integrating, we get that
		\begin{align*}
			F_{\Re,\eta}(z)={}&\int_0^{+\infty} \sum_{\sigma \in \MhatSD: \sigma \subset \t}\int_{I_\eta} e^{-(z+\d_\Hcal-d)t} \Re\bigl(\langle \pi_{\sigma,s}(a_t) \phi_{\sigma,s}, \psi_{\sigma,s}\rangle_{\Hcal(\sigma,s)}\bigr) \, dm_\sigma(s) \, dt\\
			={}&\int_0^{+\infty} \sum_{\sigma \in \MhatSD: \sigma \subset \t}\int_{I_\eta} \bigg(e^{-(z+\d_\Hcal-s)t} \Re\bigl(\langle \mathsf{T}_{\t}^{\t} \mathsf{C}_+(s) \phi_{\sigma,s},\psi_{\sigma,s} \rangle_{\Hcal(\sigma,s)}\bigr)
			\\
			{}&+
			O_{I_\eta,\tau} \left(e^{-(z + \d_\Hcal - s + \eta_s) t}  \norm{\phi_{\sigma,s}}_{\Hcal(\sigma,s)} \norm{\psi_{\sigma,s}}_{\Hcal(\sigma,s)}\right) \bigg) \, dm_\sigma(s) \, dt.
		\end{align*}
		Write
		$$F_{\Re,\eta}(z)=B_\Re(z)+E_\Re(z)$$
		where $B_\Re(z)$ is the main term and $E_\Re(z)$ is the error term. Assume $\Re(z) > 0$ so that the integrals converge absolutely and we may interchange the order of integration. Then, we get
		\begin{align*}
			B_\Re(z) = \sum_{\sigma \in \MhatSD: \sigma \subset \t} \int_{I_\eta}
			\frac{\Re\bigl(\langle \mathsf{T}_\t^\t \mathsf{C}_+(s) \phi_{\sigma,s},\psi_{\sigma,s}\rangle_{\Hcal(\sigma,s)}\bigr)}{z + \d_\Hcal - s} \,dm_\sigma(s).
		\end{align*}
		Note that $B_\Re(z)$ is holomorphic on $\C\smallsetminus [-\eta,0]$.
		Moreover, the integral defining $E_\Re(z)$ is absolutely convergent on $\{z \in \C: \Re(z)> -\eta\}$ as we now show.
		Decomposing $I_\eta=(I_\eta\smallsetminus ( (d+1)/2,d]) \sqcup (I_\eta \smallsetminus (d/2, (d+1)/2])$, and using the definition of $\eta_s$ in \cref{eq:eta_s}, we have
		\begin{align*}
			|E_\Re(z)|
			\ll{}&_{I_\eta,\t} \sum_{\sigma \in \MhatSD: \sigma\subset\t}\int_{I_\eta}\|\phi_{\sigma,s}\|_{\Hcal(\sigma,s)} \|\psi_{\sigma,s}\|_{\Hcal(\sigma,s)}\int_0^{+\infty} e^{-(\Re(z)+\d_\Hcal-s+\eta_s)t} \, dt \, dm_\sigma(s)\\
			={}& \sum_{\sigma \in \MhatSD: \sigma\subset\t} \int_{I_\eta\smallsetminus ( (d+1)/2,d]}
			\|\phi_{\sigma,s}\|_{\Hcal(\sigma,s)} \|\psi_{\sigma,s}\|_{\Hcal(\sigma,s)}\int_0^{+\infty} e^{-(\Re(z)+\d_\Hcal+s-d)t}\, dt \, dm_\sigma(s)\\
			&{}+\sum_{\sigma \in \MhatSD: \sigma\subset\t} \int_{I_\eta \smallsetminus (d/2, (d+1)/2]}\|\phi_{\sigma,s}\|_{\Hcal(\sigma,s)} \|\psi_{\sigma,s}\|_{\Hcal(\sigma,s)}\int_0^{+\infty} e^{-(\Re(z)+\d_\Hcal-s+1)t}\, dt \, dm_\sigma(s) \\
			\leq{}& \sum_{\sigma \in \MhatSD: \sigma\subset\t} \int_{I_\eta\smallsetminus ( (d+1)/2,d]}
			\|\phi_{\sigma,s}\|_{\Hcal(\sigma,s)} \|\psi_{\sigma,s}\|_{\Hcal(\sigma,s)}\int_0^{+\infty} e^{-(\Re(z)+2\d_\Hcal-d-\eta)t}\, dt \, dm_\sigma(s)\\
			&{}+\sum_{\sigma \in \MhatSD: \sigma\subset\t} \int_{I_\eta \smallsetminus (d/2, (d+1)/2]}\|\phi_{\sigma,s}\|_{\Hcal(\sigma,s)} \|\psi_{\sigma,s}\|_{\Hcal(\sigma,s)}\int_0^{+\infty} e^{-(\Re(z)+1)t}\, dt \, dm_\sigma(s)\\
			\leq{}& \left(\frac{1}{\Re(z)+2\d_\Hcal-d-\eta)}+\frac{1}{\Re(z)+1}\right) \sum_{\sigma \in \MhatSD: \s \subset\t} \int_{I_\eta} \|\phi_{\sigma,s}\|_{\Hcal(\sigma,s)} \|\psi_{\sigma,s}\|_{\Hcal(\sigma,s)} \, dm_\sigma(s)\\
			\leq{}& \left(\frac{1}{\Re(z)+\d_\Hcal-d/2)}+\frac{1}{\Re(z)+1}\right) \|\phi_\eta\|_{\Hcal[\eta]} \|\psi_\eta\|_{\Hcal[\eta]}.
		\end{align*}
		It follows that $E_\Re(z)$ is bounded and holomorphic on $\{z \in \C: \Re(z)>-\min\{\eta, 1\}\}$.
		The result now follows with $A_\Re(z)=F_{\Re,\eta}^\bot(z)+E_\Re(z)$.
	\end{proof}
	
	\section{Non-vanishing of main terms of matrix coefficients}
	\label{sec:nonvanishing}
	In the work of Edwards--Oh \cite{EdwardsOh}, a key point is the \emph{vanishing} of the main terms for non-trivial $\sigma \in \MhatSD$ and $M$-invariant vectors $u\in \Ucal(\sigma,s)_{\t_1}$ in the asymptotic expansion of matrix coefficients, \cref{thm:matrix coeff expand}---for their purpose of eventually extracting exponential mixing of the geodesic flow with respect to the Bowen--Margulis--Sullivan measure. On the other hand, for our purpose of obtaining a strong spectral gap, we require the \emph{non-vanishing} of the main terms for non-trivial $\sigma \in \MhatSD$ for suitable choices of test vectors. This has the consequence that presence of a given quasi-complementary series can be detected using asymptotics of matrix coefficients. The goal of this section is to prove such a non-vanishing result.
	
	Let $s > d/2$. Recall from \cref{lem:C-function scalar on M-types} that the Harish-Chandra C-function $\mathsf{C}_+(s)$ act as a scalar on each irreducible $M$-submodule of $L^2(K : \sigma)$. Denote by $\mathsf{C}_+(\tau : \sigma; s)$ the scalar that $\mathsf{C}_+(s)$ acts by on an $M$-type $\sigma \in \Mhat$ contained in a $K$-type $\tau \in \widehat{K}$ (which occurs with multiplicity $1$; see \cref{subsec: Unitary representation theory of SO}). Precisely these scalars were computed by Eguchi--Koizumi--Mamiuda in \cite{EguchiKoizumiMamiuda}. We quote their theorem below.
	
	\begin{thm}[{\cite[Theorem 8.2]{EguchiKoizumiMamiuda}}]
		\label{thm:EguchiKoizumiMamiuda formula}
		Let $d \geq 2$ and $s > d/2$. Let $\sigma = (\sigma_1, \dotsc, \sigma_{\lfloor d/2\rfloor}) \in \Mhat$ be an $M$-type and $\tau = (\tau_1, \dotsc, \tau_{\lceil d/2\rceil}) \in \widehat{K}$ be a $K$-type such that $\sigma$ is contained in $\tau$. We have the following:
		\begin{enumerate}
			\item if $2 \mid d$, then
			\begin{align*}
				\mathsf{C}_+(\tau : \sigma; s) = \frac{(d - 1)!}{(d/2 - 1)!} \cdot \frac{\prod_{j = 1}^{d/2} \Gamma(s - d/2 + j - \sigma_j) \prod_{j = 1}^{d/2} \Gamma(s + d/2 - j + \sigma_j)}{\prod_{j = 1}^{d/2} \Gamma(s - d/2 + j - \tau_j) \prod_{j = 1}^{d/2} \Gamma(s + d/2 - j + 1 + \tau_j)};
			\end{align*}
			\item if $2 \nmid d$, then
			\begin{align*}
				\mathsf{C}_+(\tau : \sigma; s) = (\tfrac{d - 1}{2})! \, 2^{-2s + d} \, \Gamma(2s) \cdot \frac{\prod_{j = 1}^{(d - 1)/2} \Gamma(s - d/2 + j - \sigma_j) \prod_{j = 1}^{(d - 1)/2} \Gamma(s + d/2 - j + \sigma_j)}{\prod_{j = 1}^{(d + 1)/2} \Gamma(s - d/2 + j - \tau_j) \prod_{j = 1}^{(d + 1)/2} \Gamma(s + d/2 - j + 1 + \tau_j)}.
			\end{align*}
		\end{enumerate}
	\end{thm}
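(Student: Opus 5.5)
This formula is quoted from \cite[Theorem 8.2]{EguchiKoizumiMamiuda}, and in the paper it is used as a black box. If I were to reprove it, I would compute the scalars $\mathsf{C}_+(\tau : \sigma; s)$ in two stages: first find how they depend on $\tau$ for a fixed $\sigma$, using a recursion in $\tau$, and then compute a single base value directly. The $\tau$-dependent part of the claimed formula is the denominator
\[
\prod_j \Gamma(s - d/2 + j - \tau_j)\,\Gamma(s + d/2 - j + 1 + \tau_j).
\]
Raising one coordinate $\tau_j$ by one changes this denominator by the explicit rational factor
\[
\frac{s + d/2 - j + 1 + \tau_j}{s - d/2 + j - \tau_j - 1},
\]
up to sign. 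So the first target is the ratio identity
\[
\frac{\mathsf{C}_+(\tau + e_j : \sigma; s)}{\mathsf{C}_+(\tau : \sigma; s)} = \frac{s - d/2 + j - \tau_j - 1}{s + d/2 - j + 1 + \tau_j}
\]
for all admissible $\tau$ and $\tau + e_j$ that both contain $\sigma$.

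To get this recursion, I would use the compact picture and differentiate along $\mathfrak{p}$, where $\mathfrak{g} = \mathfrak{k} \oplus \mathfrak{p}$ and $\mathfrak{p} \cong \C^{d+1}$ is the standard representation of $K$.
\begin{enumerate}
\item For $X \in \mathfrak{p}$, the operator $dU^s(X)$ on $L^2(K : \sigma)$ is multiplication by an explicit function (linear in the matrix entries of $k$ and depending on $s$), combined with a first-order $\mathfrak{k}$-derivative.
\item By the Pieri rule for the tensor product of $\tau$ with the standard representation, composing with $\mathsf{P}_{\tau + e_j}$ gives a shift operator $L^2(K : \sigma)_\tau \to L^2(K : \sigma)_{\tau + e_j}$.
\item I would also use that $\mathsf{C}_+(s)$ is essentially the asymptotic ``leading coefficient'' map: it is, up to $\sigma(w_0)$ and normalization, the unnormalized intertwiner $U^s \to U^{d-s}$, as in the definition of $\mathcal{A}(\sigma, s)$. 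This intertwiner commutes with $\mathfrak{g}$.
\item Commuting $\mathsf{C}_+(s)$ past the shift operator then shows that the $\mathsf{C}_+$-scalars on $\tau$ and on $\tau + e_j$ differ by the ratio of the scalars by which the $\mathfrak{p}$-shift acts in parameter $s$ versus parameter $d - s$.
\item By Schur's lemma, each of these shift scalars is a degree-one polynomial in $s$ times a Gelfand--Tsetlin coefficient that does not depend on $s$. The polynomial factors should be $s - d/2 + j - \tau_j - 1$ and $s + d/2 - j + 1 + \tau_j$, via the $\rho$-shifted highest weight of $\tau$, i.e.\ the quantities $\tau_j + (d + 1 - 2j)/2$ appearing in \cref{eqn: minimal K-type}.
\end{enumerate}

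For the base case, I would take the minimal $K$-type containing $\sigma$, namely $\tau^0 = (\sigma_1, \dotsc, \sigma_{\lfloor d/2\rfloor}, 0, \dots)$, padded with zeros in the parity-appropriate way. Here I would evaluate the defining integral directly. Identify $\overline{N} \cong \R^d$ via $x \mapsto \overline{n}_x$. Then
\begin{itemize}
\item $e^{-sH(\overline{n}_x)} = (1 + |x|^2)^{-s}$;
\item $\kappa(\overline{n}_x)$ is a rotation in the plane spanned by $e_0$ and $x/|x|$, by an angle $\theta(|x|)$ with $\cos\theta = (1 - |x|^2)/(1 + |x|^2)$.
\end{itemize}
Pass to polar coordinates. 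Averaging over the sphere $S^{d-1} = M/\SO(d-1)$ turns the matrix coefficient of $\tau^0$ into a zonal function of $\theta$, and for the minimal type this zonal function is a power of $\cos(\theta/2)$. The remaining radial integral is then a Beta integral, which produces the Gamma quotient with $\sigma$ in the numerator together with the prefactors $(d-1)!/(d/2 - 1)!$ or $(\tfrac{d-1}{2})!\,2^{-2s+d}\,\Gamma(2s)$. The extra $\Gamma(2s)$ in the odd case comes from the duplication formula.

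I expect the main obstacle to be the recursion step: it requires tracking precisely the Gelfand--Tsetlin (Clebsch--Gordan) coefficients of $\mathfrak{p} \otimes \tau$ and the $s$-dependence of the multiplication part of $dU^s(X)$, so that the $s$-independent constants cancel and exactly the claimed linear factors remain. I would first check the cases $d = 2$ and $d = 3$ by hand. I would also fix normalizations against Harish-Chandra's $\Gamma$-function formula for $\int_{\overline{N}} e^{-sH}\, d\overline{n}$, which is the case $\sigma = \tau = \triv$.
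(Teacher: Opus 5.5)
The paper gives no proof of this formula; it quotes it from Eguchi--Koizumi--Mamiuda. Your outline is a legitimate strategy for formulas of this kind: a $\mathfrak{p}$-shift recursion in $\tau$ through the Knapp--Stein intertwiner, plus one directly computed base value. As written, though, it has a genuine gap. The $s$-dependence in your step~5 and the output of your base-case integral are read off from the target rather than derived, and with the paper's definition of $\mathsf{C}_+(s)$ they are false.

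You use $e^{-sH(\overline{n}_x)}=(1+|x|^2)^{-s}$, which is also why the paper needs $s>d/2$. With that normalization, the very check you propose already fails.
\begin{itemize}
\item For $\sigma=\tau=\triv$ one gets $\int_{\R^d}(1+|x|^2)^{-s}\,dx=\pi^{d/2}\Gamma(s-d/2)/\Gamma(s)$, which blows up at $s=d/2$. The displayed formula instead gives $\frac{(d-1)!}{(d/2-1)!}\,\Gamma(s)/\Gamma(s+d/2)$ for even $d$, which is regular there.
\item For $d=2$, $\sigma=\triv$, $\tau=(1)$, the zonal function is $\cos\theta=(1-r^2)/(1+r^2)$. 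A two-line computation gives $\mathsf{C}_+(\tau:\triv;s)/\mathsf{C}_+(\triv:\triv;s)=(s-2)/s$, not the $(s-1)/(s+1)$ your ratio predicts.
\end{itemize}

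If you actually carry out step~5, the projected $\mathfrak{p}$-action at parameter $s$ is $\bigl(s-\tfrac d2+\tfrac12(\lambda_{\tau+e_j}-\lambda_\tau)\bigr)=(s-j+1+\tau_j)$ times an $s$-independent operator. At parameter $d-s$ the factor is $-(s-d+j-1-\tau_j)$. So the recursion ratio is $\pm\frac{s-d+j-1-\tau_j}{s-j+1+\tau_j}$, which is your ratio with $s$ replaced by $s-d/2$. I checked this against direct integrals, including a $j=2$ step for $d=3$.

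Thus the displayed formula is correct only in the induction parameter $\nu=s-d/2$ of $\Ind_P^G(\sigma\otimes\chi_\nu\otimes\triv)$. It also requires $d\overline{n}$ normalized by $\int_{\overline{N}}e^{-dH(\overline{n})}\,d\overline{n}=1$; indeed, at $\nu=d/2$ the formula's trivial-type value is exactly $1$ in both parities. That shifted statement is what your method can prove. It is also all the paper needs, since the argument of \cref{prop:nonvanishing} goes through verbatim for $\nu>0$.

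The base case also needs repair. After integrating over directions, the integrand is $\frac{1}{\dim\sigma}\tr\bigl(\mathsf{P}_\sigma\tau(k_\theta)\bigr)$ whatever test vector you use. For the minimal type this is generally not a power of $\cos(\theta/2)$:
\begin{itemize}
\item for $d=3$, $\sigma=(1)$, $\tau=(1,0)$ it is $(2+\cos\theta)/3$;
\item for $d=4$, $\sigma=\tau=(1,0)$ it is $(3+\cos\theta)/4$.
\end{itemize}
So the radial integral is a finite sum of Beta integrals that must be summed in closed form, a terminating hypergeometric identity, not a single Beta integral.

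Smaller points:
\begin{itemize}
\item For even $d$ the base type must be $(\sigma_1,\dots,\sigma_{d/2-1},|\sigma_{d/2}|)$.
\item For odd $d$ you also need steps that lower the last coordinate of $\tau$.
\item Step~3 requires the Frobenius-reciprocity identification of $\mathsf{C}_+(\tau:\sigma;s)$ with the scalar of the unnormalized Knapp--Stein operator on the $\tau$-isotypic part of $L^2(K:\sigma)$. The former is built from left translations and is only $M$-equivariant, so this identification is not automatic.
\item Dividing in the recursion requires the projected multiplication operator from $\tau$ to $\tau+e_j$ to be nonzero on each edge you use.
\end{itemize}
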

	
	We use \cref{thm:EguchiKoizumiMamiuda formula} to prove the following proposition. Recall the constant $\lambda_\tau$ associated to $K$-types $\tau \in \widehat{K}$ from \cref{eqn: minimal K-type}.
	
	\begin{prop}
		\label{prop:nonvanishing}
		Let $\sigma \in \Mhat$ be an $M$-type. There exists a $K$-type $\tau \in \widehat{K}$ such that
		\begin{enumerate}
			\item\label{itm: tau contains sigma and dual} $\tau$ contains both $\sigma$ and its dual $\sigma^*$;
			\item\label{itm: tau is minimal K-type} $\tau$ minimizes $\lambda_\tau$ among the $K$-types satisfying Property~\cref{itm: tau contains sigma and dual};
			\item\label{itm: nonvanishing} $\mathsf{C}_+(\tau : \sigma^*; s) \neq 0$ for all $s > d/2$.
		\end{enumerate}
	\end{prop}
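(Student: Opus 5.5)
We construct $\tau$ explicitly from the highest weight of $\sigma$, using the interlacing branching laws of \cref{subsec: Unitary representation theory of SO}. We then read off non-vanishing from the Gamma-function formula of \cref{thm:EguchiKoizumiMamiuda formula}. The case $d=1$ is trivial, since $M$ is trivial and $\mathsf{C}_+(s)$ is a positive multiple of the identity on the $K$-types. So assume $d\ge 2$ and write $\sigma=(\sigma_1,\dots,\sigma_{\lfloor d/2\rfloor})$.

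\emph{Case $2\nmid d$.} Here $M\cong\SO(d)$ has odd rank index, so $\sigma^*=\sigma$, and we must find $\tau=(\tau_1,\dots,\tau_{(d+1)/2})\in\widehat{\SO(d+1)}$ with $\tau_1\ge\sigma_1\ge\tau_2\ge\dots\ge\sigma_{(d-1)/2}\ge|\tau_{(d+1)/2}|$. The choice minimizing $\lambda_\tau=\sum_j(\tau_j+\frac{d+1-2j}{2})^2$ is $\tau_j=\sigma_j$ for $j\le (d-1)/2$ and $\tau_{(d+1)/2}=0$. Each summand is increasing in $\tau_j$ for $\tau_j\ge0$ (the shifts are nonnegative), and $\tau_j\ge\sigma_j$ is forced for $j\le(d-1)/2$. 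The last coordinate is minimized in absolute value at $0$, with shift $0$. This gives Properties~\cref{itm: tau contains sigma and dual} and \cref{itm: tau is minimal K-type}.

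\emph{Case $2\mid d$.} Now $\sigma^*=(\sigma_1,\dots,\sigma_{d/2-1},\pm\sigma_{d/2})$ (self-dual when $4\mid d$), and $\tau=(\tau_1,\dots,\tau_{d/2})$ must interlace as $\tau_1\ge\sigma_1\ge\tau_2\ge\dots\ge\tau_{d/2}\ge|\sigma_{d/2}|$. This condition depends only on $|\sigma_{d/2}|$, so $\tau$ contains $\sigma$ iff it contains $\sigma^*$. The $\lambda_\tau$-minimizer is $\tau_j=\sigma_j$ for $j<d/2$ and $\tau_{d/2}=|\sigma_{d/2}|$, by the same monotonicity argument (all shifts $\frac{d+1-2j}{2}$ are positive). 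The only subtlety is checking that increasing a coordinate never decreases $\lambda_\tau$, which is clear since $\tau_j\ge0$ in both cases.

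It remains to prove Property~\cref{itm: nonvanishing}, i.e.\ that $\mathsf{C}_+(\tau:\sigma^*;s)\neq0$ for all $s>d/2$. In the formula of \cref{thm:EguchiKoizumiMamiuda formula}, $\Gamma$ never vanishes, so $\mathsf{C}_+$ vanishes exactly when a denominator Gamma factor has a pole and the numerator does not cancel it. The denominator factors $\Gamma(s+d/2-j+1+\tau_j)$ have positive arguments, so they are harmless. The dangerous factors are $\Gamma(s-d/2+j-\tau_j)$. Substituting $\sigma^*$ (whose coordinates agree with those of $\sigma$ up to the sign of the last one), we compare these with the numerator factors $\Gamma(s-d/2+j-\sigma^*_j)$. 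For $j<d/2$ (respectively $j\le (d-1)/2$) we have $\tau_j=\sigma_j^*$, so the factors cancel exactly as meromorphic functions, giving a nonzero value wherever both are finite. In the odd case the extra denominator factor is $\Gamma(s-d/2+(d+1)/2-0)=\Gamma(s+1/2)$, which is finite. In the even case, for $j=d/2$ we get $\tau_{d/2}=|\sigma_{d/2}|$. If $\sigma^*_{d/2}\ge0$ the factors cancel. Otherwise the denominator is $\Gamma(s-|\sigma_{d/2}|)$ and the numerator is $\Gamma(s+|\sigma_{d/2}|)$, and the remaining factor $\Gamma(s+|\sigma_{d/2}|)\,\Gamma(s+d/2-j+\sigma^*_j)$ sits with $\Gamma(s+1+|\sigma_{d/2}|)$ downstairs.

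The main obstacle is therefore this even-case subcase with the sign mismatch. Here $1/\Gamma(s-|\sigma_{d/2}|)$ vanishes if $s-|\sigma_{d/2}|\in\Z_{\le0}$, which is possible for $s>d/2$ when $|\sigma_{d/2}|\ge s$. To avoid it, we exploit the freedom to apply the formula to whichever of $\sigma,\sigma^*$ has nonnegative last coordinate. More precisely, the statement concerns $\sigma^*$, so we will verify the convention of \cite{EguchiKoizumiMamiuda} for the sign of $\sigma_{d/2}$ relative to the dual. If needed, we will use the symmetry induced by conjugation by an element of $\mathrm{O}(d+1)$ normalizing $M$, which swaps $\sigma\leftrightarrow\sigma^*$ while fixing $\tau$ (since $\tau_{d/2}$ is only defined up to this symmetry when $d+1$ is odd). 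This symmetry shows $\mathsf{C}_+(\tau:\sigma^*;s)=\mathsf{C}_+(\tau:\sigma;s)$, and so reduces the sign-mismatch subcase to the cancelling one. If that symmetry argument fails, the fallback is a direct check of the Gamma-ratio poles in that subcase.
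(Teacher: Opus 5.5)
Your choice of $\tau$, and your verification of Properties (1), (2), and of (3) for odd $d$, agree with the paper. The gap is in Property (3) for even $d$ in the subcase $\sigma^*_{d/2}<0$. You call this the main obstacle, then leave it to a symmetry argument you do not carry out, with a ``direct check'' fallback you also do not perform. As written, (3) is therefore not proved there.

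In fact there is no obstacle. You overlooked the $j=d/2$ factor of the \emph{second} numerator product in the Eguchi--Koizumi--Mamiuda formula. That factor is $\Gamma(s+d/2-j+\sigma^*_j)$ at $j=d/2$, namely $\Gamma(s+\sigma^*_{d/2})$. In this subcase it equals $\Gamma(s-|\sigma_{d/2}|)$, and it cancels the dangerous denominator factor $\Gamma(s-\tau_{d/2})=\Gamma(s-|\sigma_{d/2}|)$ identically as meromorphic functions. This is exactly the paper's one-line argument: whatever the sign, the numerator contains $\Gamma(s-\sigma^*_{d/2})\,\Gamma(s+\sigma^*_{d/2})=\Gamma(s-|\sigma_{d/2}|)\,\Gamma(s+|\sigma_{d/2}|)$. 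After cancellation every remaining Gamma factor has positive argument, so $\mathsf{C}_+(\tau:\sigma^*;s)$ is finite and nonzero for all $s>d/2$. What hid this was your implicit assumption that the numerator factors are harmless. The numerator factor $\Gamma(s+\sigma^*_{d/2})$ can itself have a pole, and that pole is precisely what absorbs the denominator's.

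The symmetry workaround is also not right as stated. When $4\mid d$ one has $\sigma^*=\sigma$, so a symmetry ``swapping $\sigma\leftrightarrow\sigma^*$'' is vacuous and cannot move you from $\sigma_{d/2}<0$ to the cancelling case. What you would actually need is invariance of the scalar under the outer twist $\sigma_{d/2}\mapsto-\sigma_{d/2}$ of $\SO(d)$, which you have not established. That invariance is, however, visible directly from the formula, since $\sigma_{d/2}$ enters only through $\Gamma(s-\sigma_{d/2})\Gamma(s+\sigma_{d/2})$.

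A minor slip: for $d=1$, $\mathsf{C}_+(s)$ is not a positive multiple of the identity on every $K$-type; for example, on $\tau=(\pm1)$ it vanishes at $s=1$. You only need the trivial $K$-type, though, where the scalar is $\int_{\overline{N}}e^{-sH(\overline{n})}\,d\overline{n}>0$.
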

	
	\begin{proof}
		Let $\sigma \in \Mhat$ be an $M$-type. Proceeding by cases, we take an explicit $K$-type $\tau \in \widehat{K}$ and verify the claimed properties.
		
		\medskip
		\noindent
		\textit{Case 1: $2 \mid d$}. Let us write $\sigma = (\sigma_1, \dotsc, \sigma_{d/2})$ where
		\begin{align*}
			\sigma_1 \geq \sigma_2 \geq \dotsb \geq \sigma_{d/2 - 1} \geq |\sigma_{d/2}|.
		\end{align*}
		Let us also write $\sigma^* = (\sigma^*_1, \dotsc, \sigma^*_{d/2})$. In the current case, recall that
		\begin{itemize}
			\item if $4 \mid d$, then $\sigma^* = \sigma = (\sigma_1, \dotsc, \sigma_{d/2})$;
			\item otherwise, $\sigma^* = (\sigma_1, \dotsc, \sigma_{d/2 - 1}, -\sigma_{d/2})$.
		\end{itemize}
		Take $\tau = (\tau_1, \dotsc, \tau_{d/2 - 1}, \tau_{d/2}) := (\sigma_1, \dotsc, \sigma_{d/2 - 1}, |\sigma_{d/2}|)$. Then $\tau$ contains both $\sigma$ and $\sigma^*$ since they satisfy the interlacing property
		\begin{align*}
			\tau_1 \geq \sigma_1 \geq \tau_2 \geq \sigma_2 \geq \dotsb \geq \tau_{d/2 - 1} \geq \sigma_{d/2 - 1} \geq \tau_{d/2} \geq |\pm\sigma_{d/2}|.
		\end{align*}
		This proves Property~\cref{itm: tau contains sigma and dual}. Inspecting \cref{eqn: minimal K-type} term by term verifies Property~\cref{itm: tau is minimal K-type}.
		
		Let $s > d/2$. The formula provided by \cref{thm:EguchiKoizumiMamiuda formula} for $\sigma^*$ and the choice of $\tau$ gives
		\begin{align*}
			\mathsf{C}_+(\tau : \sigma^*; s) = \frac{(d - 1)!}{(d/2 - 1)!} \cdot \frac{\Gamma(s - \sigma_{d/2}^*) N_2}{\Gamma(s - |\sigma_{d/2}|) D_2}
		\end{align*}
		where all but the last factor in the first iterated products in the numerator and denominator canceled out, and $N_2 = \prod_{j = 1}^{d/2} \Gamma(s + d/2 - j + \sigma^*_j)$ and
		$D_2 = \prod_{j = 1}^{d/2} \Gamma(s + d/2 - j + 1 + \tau_j)$ denote the second iterated products in the numerator and denominator. To prove Property~\cref{itm: nonvanishing}, we need to verify that zeros in the numerator $\Gamma(s - \sigma^*_{d/2}) N_2$ and poles in the denominator $\Gamma(s - |\sigma_{d/2}|) D_2$ are canceled out. The first condition holds simply because the $\Gamma$ function does not have zeros. Now we verify the second condition. Recall that the poles of $\Gamma$ are all simple and occur exactly at the non-positive integers. The factor $D_2$ does not have poles since
		\begin{align}
			\label{eqn: large Gamma argument}
			s + d/2 - j + 1 + \tau_j \geq s > 0 \qquad \text{for all $1 \leq j \leq d/2$}.
		\end{align}
		So we are left to examine the first factor $\Gamma(s - |\sigma_{d/2}|)$ in the denominator. Fortunately, this must cancel with one of the factors $\Gamma(s - \sigma_{d/2})$ or $\Gamma(s + \sigma_{d/2})$ in the numerator using the observation that
		\begin{align*}
			\Gamma(s - \sigma^*_{d/2})\Gamma(s + \sigma^*_{d/2}) = \Gamma(s - \sigma_{d/2})\Gamma(s + \sigma_{d/2}).
		\end{align*}
		Therefore, $\mathsf{C}_+(\tau : \sigma^*; s) \neq 0$.
		
		\medskip
		\noindent
		\textit{Case 2: $2 \nmid d$}. Let us write $\sigma = (\sigma_1, \dotsc, \sigma_{(d - 1)/2})$ where
		\begin{align*}
			\sigma_1 \geq \sigma_2 \geq \dotsb \geq \sigma_{(d - 1)/2} \geq 0.
		\end{align*}
		In the current case, $\sigma^* = \sigma$. Take $\tau = (\tau_1, \dotsc, \tau_{(d - 1)/2}, \tau_{(d + 1)/2}) := (\sigma_1, \dotsc, \sigma_{(d - 1)/2}, 0)$. Then $\tau$ contains $\sigma$ (and hence also $\sigma^*$) since they satisfy the interlacing property
		\begin{align*}
			\tau_1 \geq \sigma_1 \geq \tau_2 \geq \sigma_2 \geq \dotsb \geq \tau_{(d - 1)/2} \geq \sigma_{(d - 1)/2} \geq |\tau_{(d + 1)/2}|.
		\end{align*}
		This proves Property~\cref{itm: tau contains sigma and dual}. Inspecting \cref{eqn: minimal K-type} term by term verifies Property~\cref{itm: tau is minimal K-type}.
		
		Let $s > d/2$. As in Case~1, the formula provided by \cref{thm:EguchiKoizumiMamiuda formula} for $\sigma^*$ and the choice of $\tau$ gives
		\begin{align*}
			\mathsf{C}_+(\tau : \sigma^*; s) = (\tfrac{d - 1}{2})! \, 2^{-2s + d} \,  \cdot \frac{\Gamma(2s) N_2}{\Gamma(s + 1/2) D_2}
		\end{align*}
		and to prove Property~\cref{itm: nonvanishing}, we only need to verify that the poles in the denominator $\Gamma(s + 1/2) D_2$ are canceled out. Again, $D_2$ does not have poles by the same inequality \cref{eqn: large Gamma argument} for all $1 \leq j \leq (d + 1)/2$.
		Similarly, $\Gamma(s + 1/2)$ does not have poles since $s + 1/2 > 0$. Therefore, $\mathsf{C}_+(\tau : \sigma^*; s) \neq 0$.
	\end{proof}

	The following corollary shows that the main terms in the asymptotic expansion provided by \cref{thm:matrix coeff expand} are non-vanishing for a suitable choice of test vectors.
	
	\begin{cor}\label{cor:nonvanishing}
		Let $\sigma \in \MhatSD$ be a self-dual $M$-type. Let $\t \in \widehat{K}$ be the $K$-type provided by \cref{prop:nonvanishing}. Then, for all $s \in \Ical_\sigma$, the subspace $\Ucal(\sigma,s)_\t\cap \Ucal(\sigma,s)_{\sigma} \subset \Ucal(\sigma,s)$ is non-trivial and the operator $\mathsf{T}_\t^\t \mathsf{C}_+(s)$ acts on it by a non-zero scalar.
	\end{cor}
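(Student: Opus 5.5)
Since $\sigma$ is self-dual, $\sigma^* = \sigma$, so \cref{prop:nonvanishing} gives a $K$-type $\tau$ that contains $\sigma$, minimizes $\lambda_\tau$ among such $K$-types, and satisfies $\mathsf{C}_+(\tau : \sigma; s) \neq 0$ for all $s > d/2$. The proof has two steps: first show that the $\tau$-isotypic component of $\Ucal(\sigma,s)$ is non-zero, then compute the action of $\mathsf{T}_\tau^\tau \mathsf{C}_+(s)$ on its $\sigma$-isotypic part.

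\emph{Non-triviality.} By Frobenius reciprocity, the $K$-types of $\Ind_P^G(\sigma \otimes \chi_{s-d/2} \otimes \triv)$ are exactly the $\tau'$ containing $\sigma$, each with multiplicity one. Property~(2) of \cref{prop:nonvanishing} therefore says that $\tau$ is a minimal $K$-type of the induced representation. By Vogan's theorem, \cref{thm: minimal K-type in quasi-complementary series}, $\tau$ is contained in $\Ucal(\sigma,s)$ for every $s \in \Ical_\sigma$, including the ends of complementary series $s = d - \ell(\sigma)$. So $\Ucal(\sigma,s)_\tau \neq 0$. Since $\tau$ contains $\sigma$ (with multiplicity one by the branching law), the $M$-isotypic subspace $\Ucal(\sigma,s)_\tau \cap \Ucal(\sigma,s)_\sigma$ is a copy of $\sigma$ inside the irreducible $K$-module $\tau$, hence non-zero. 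For the ends of complementary series, I will argue at the level of $L^2(K:\sigma)_\tau$. Minimality gives $a(\sigma,s,\tau) \neq 0$, so $\tau$ does not meet $\ker \mathcal{A}(\sigma,s)$, and $L^2(K:\sigma)_\tau$ maps injectively into the quotient.

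\emph{The scalar.} By \cref{lem:C-function scalar on M-types}, $\mathsf{C}_+(s)$ preserves $M$-types and acts on the $\sigma$-isotypic part of the $K$-type $\tau$ by $\mathsf{C}_+(\tau : \sigma; s)$, which is non-zero by Property~(3). In particular $\mathsf{C}_+(s)$ maps $\Ucal(\sigma,s)_\tau \cap \Ucal(\sigma,s)_\sigma$ into itself. By \cref{lem:T_tau as projection}, $\mathsf{T}_\tau^\tau = \frac{\dim\tau}{\dim\sigma}\mathsf{P}_{\sigma^*} = \frac{\dim\tau}{\dim\sigma}\mathsf{P}_\sigma$ on this space, which acts as the identity times $\dim\tau/\dim\sigma$. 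Hence the composite acts by
\begin{align*}
\frac{\dim(\tau)}{\dim(\sigma)}\,\mathsf{C}_+(\tau : \sigma; s) \neq 0.
\end{align*}

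\emph{Main obstacle.} The delicate point is whether the $M$-isotypic projection $\mathsf{P}_\sigma$ here is taken for $\lambda_K|_M$, and is therefore compatible with the $M$-types on which $\mathsf{C}_+$ is scalar. Both \cref{lem:T_tau as projection} and \cref{lem:C-function scalar on M-types} refer to the $\lambda_M$-action, so I expect this to match, but the conventions need to be checked. A second point is that the descent to the quotient in the end-of-complementary-series case must be compatible with these operators. This holds because both $\mathsf{T}_\tau^\tau$ and $\mathsf{C}_+(s)$ preserve $\ker\mathcal{A}(\sigma,s)$, as noted before \cref{lem:T_tau as projection}.
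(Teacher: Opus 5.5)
Your proposal is correct and follows essentially the same route as the paper. Property (2) of \cref{prop:nonvanishing} together with Frobenius reciprocity makes $\tau$ a minimal $K$-type of the induced representation, Vogan's theorem (\cref{thm: minimal K-type in quasi-complementary series}) gives $\Ucal(\sigma,s)_\tau \cap \Ucal(\sigma,s)_\sigma \neq 0$, and \cref{lem:T_tau as projection} with $\mathsf{P}_{\sigma^*}=\mathsf{P}_\sigma$ and property (3) yield the scalar $\frac{\dim(\tau)}{\dim(\sigma)}\mathsf{C}_+(\tau:\sigma;s)\neq 0$; the extra consistency checks you flag (the $\lambda_M$-conventions and descent to the quotient at $s=d-\ell(\sigma)$) are exactly what the paper uses implicitly.
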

	
	\begin{proof}
		Let $\sigma$ and $\tau$ be as in the corollary. By \cref{prop:nonvanishing}\cref{itm: tau contains sigma and dual}, $\tau$ contains $\sigma$, whence $\tau$ is contained in $\Ind_P^G(\sigma \otimes \chi_{s - d/2} \otimes \triv)$ (see \cref{subsec: Unitary representation theory of G}). By \cref{prop:nonvanishing}\cref{itm: tau is minimal K-type}, $\tau$ is a minimal $K$-type of $\Ind_P^G(\sigma \otimes \chi_{s - d/2} \otimes \triv)$ and so by \cref{thm: minimal K-type in quasi-complementary series}, we further deduce that the subspace $\Ucal(\sigma,s)_\tau$ is non-trivial. Again since $\tau$ contains $\sigma$, we conclude that $\Ucal(\sigma,s)_\t\cap \Ucal(\sigma,s)_{\sigma}$ is non-trivial. Recall that \cref{lem:T_tau as projection} gives $\mathsf{T}_\t^\t|_{\Ucal(\sigma, s)_\t} =\frac{\dim(\t)}{\dim(\sigma)} \mathsf{P}_{\sigma^*}|_{\Ucal(\sigma, s)_\t}$, and that $\mathsf{P}_{\sigma^\ast} = \mathsf{P}_{\sigma}$  acts as identity on $\Ucal(\sigma,s)_{\sigma}$.
		Moreover, we have by \cref{prop:nonvanishing}\cref{itm: nonvanishing} that $\mathsf{C}_+(s)$ acts by the scalar $\mathsf{C}_+(\tau : \sigma; s) \neq 0$ on $\Ucal(\sigma,s)_\t\cap \Ucal(\sigma,s)_{\sigma}$. Putting everything together, we have
		\begin{align*}
			[\mathsf{T}_\t^\t \mathsf{C}_+(s)]|_{\Ucal(\sigma,s)_\t \cap \Ucal(\sigma,s)_{\sigma}} = \frac{\dim(\t)}{\dim(\sigma)} \mathsf{C}_+(\t : \sigma; s)\cdot \Id_{\Ucal(\sigma,s)_\t \cap \Ucal(\sigma,s)_{\sigma}} \neq 0,
		\end{align*}
		as desired.
	\end{proof}
	
	\section{Decay of matrix coefficients with power saving error terms and Laplace transforms}
	\label{sec:exp mixing}
	
	In this section, we introduce the key dynamical input: the decay of matrix coefficients with power saving error terms which is a consequence of exponential mixing of the frame flow. We use this to obtain meromorphic extensions of Laplace transforms of the scaled matrix coefficients to a strip to the left of the imaginary axis with at most one simple pole at the origin.

	We quote the following theorem from \cite[Theorem 1.2]{SarkarWinter} and \cite[Theorem 1.2]{LiPanSarkar} in a simplified form.
	
	\begin{thm}[{\cite{SarkarWinter,LiPanSarkar}}]
		\label{thm:decay of correlations for Haar}
		There exist non-zero Borel measures $\BR$ and $\BRstar$, and $\eta > 0$ such that for all $\phi, \psi\in \compactsmooth(\GmodGamma)$ and $t > 0$, we have
		\begin{align*}
			\int_{\GmodGamma} (\phi \circ a_t) \psi \, d\Haar
			= e^{-(d-\critexp)t}\int_{\GmodGamma} \phi \, d\BR \int_{\GmodGamma} \psi \, d\BRstar
			+ O_{\phi,\psi}\bigl(e^{-(d-\critexp + \eta) t}\bigr),
		\end{align*}
		where the implicit constant depends on Sobolev norms of $\phi$ and $\psi$, as well as their supports.
	\end{thm}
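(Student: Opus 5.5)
The plan is to deduce this statement from exponential mixing of the frame flow with respect to the Bowen--Margulis--Sullivan measure $\BMS$, which is the main theorem of \cite{SarkarWinter} (convex cocompact case) and \cite{LiPanSarkar} (geometrically finite case). The bridge is Roblin's transverse intersection argument, made effective as in \cite{OhWinter-Congruence,MohammadiOh}. Concretely, I take as input the following. There exist $\eta' > 0$ and $m$ such that for all $\phi, \psi \in \compactsmooth(\GmodGamma)$,
\begin{align*}
\int (\phi\circ a_t)\psi\, d\BMS = \frac{1}{|\BMS|}\int\phi\, d\BMS \int \psi\, d\BMS + O\bigl(e^{-\eta' t}\|\phi\|_{C^m}\|\psi\|_{C^m}\bigr).
\end{align*}
The goal is to convert this into the Haar statement, with the Burger--Roblin measures $\BR$ and $\BRstar$ appearing as the limiting product.

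First, I would reduce by a partition of unity to $\phi,\psi$ supported in small flow boxes of the form $x\,\overline{N}_\epsilon A_\epsilon M N_\epsilon$ (product neighborhoods). On such a box, the Haar measure, $\BMS$, $\BR$ and $\BRstar$ all have explicit local product structures in Hopf coordinates. Haar is Lebesgue on both the $N$ and $\overline N$ leaves. $\BMS$ is Patterson--Sullivan conditional on both. $\BR$ and $\BRstar$ are mixed: Lebesgue on one leaf and Patterson--Sullivan on the other. The next step is to write $\int (\phi\circ a_t)\psi\, d\Haar$ as an integral over the weak-stable/unstable transversal of integrals of $\phi\circ a_t$ along unstable horospherical pieces $xN_\epsilon$ against Lebesgue measure. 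This leaves an equidistribution problem for the expanding translates $x N_\epsilon a_t$ weighted by Lebesgue, with normalization $e^{(d-\critexp)t}$.

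The core step is effective equidistribution of these translated horospherical pieces. Fix $x$ and test functions $f$ on $xN_\epsilon$ and $\phi$ on $\GmodGamma$. I would show
\begin{align*}
e^{(d-\critexp)t}\int_{N} f(n)\phi(xna_t)\,dn = \mu^{\mathrm{PS}}_{xN}(f)\,\frac{\BR(\phi)}{|\BMS|} + O\bigl(e^{-\eta'' t}\bigr).
\end{align*}
To do this, thicken $f\,dn$ slightly in the transverse $\overline N A M$ direction to obtain a function $\tilde f$ on $\GmodGamma$. Decompose $dn$ on $xN$ relative to the Patterson--Sullivan conditional, and apply BMS mixing to $\tilde f$ against $\phi$ thickened along $N$. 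The transverse smoothing costs $O(\epsilon)$ errors and Sobolev norms of size $\epsilon^{-m}$. Optimizing $\epsilon = e^{-ct}$ gives a power saving.

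Integrating this over the transversal against Lebesgue then produces $\BRstar(\psi)$ from $\psi$, using the local product structure and Fubini. This yields the theorem with some $\eta>0$. The implied constant depends on Sobolev norms and on the supports, through the injectivity radius of the boxes and the number of boxes needed.

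I expect the main obstacle to be controlling the Patterson--Sullivan mass near the boundary of the boxes, and the geometry in cusps for geometrically finite $\Gamma$. The effective argument needs a bound $\mu^{\mathrm{PS}}(\partial_\epsilon B)\ll \epsilon^{\alpha}$ for some $\alpha>0$, i.e.\ a Friedland--type regularity (friendliness) of the PS measure, to absorb the thickening errors. For geometrically finite groups this regularity follows from Stratmann--Velani's global measure formula, since $\phi,\psi$ are compactly supported. I would invoke this directly rather than reprove it, exactly as in \cite{OhWinter-Congruence,MohammadiOh}.
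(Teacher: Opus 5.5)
Your proposal takes the same route the paper indicates for this cited result: exponential mixing of the frame flow with respect to $\BMS$ (Sarkar--Winter, Li--Pan--Sarkar) is converted into the Haar asymptotic by an effective form of Roblin's transverse intersection argument, as in Oh--Shah, Mohammadi--Oh and Oh--Winter. Two small fixes are needed. In the paper's convention $N$ is the \emph{contracting} horospherical subgroup, so your expanding pieces should be $x\overline{N}_\epsilon$; swap $N$ and $\overline{N}$ throughout. And since the cited mixing theorems assume $\G$ is torsion-free, you should first pass to a finite-index torsion-free subgroup $\tilde{\G}$ via Selberg's lemma, lifting $\phi,\psi$ and the measures to $\tilde{\G}\bsl G$, as the paper's remark does.
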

	
	As alluded to previously, the above theorem is derived from \cite[Theorem 1.1]{SarkarWinter} and \cite[Theorem 1.1]{LiPanSarkar} regarding exponential mixing of the frame flow with respect to the Bowen--Margulis--Sullivan (BMS) measure $\BMS$ which is the Borel probability measure of maximal entropy.
	In the case $\critexp >\max\{d/2, d-1\}$, this result was obtained previously by Mohammadi--Oh in \cite{MohammadiOh}.
	Exponential mixing of the geodesic flow is due to Li--Pan \cite{LiPan}; see also \cite{Khalil-MixingII,Khalil-MixingI}. In the convex cocompact setting, exponential mixing of the frame flow is due to \cite{SarkarWinter}. The aforementioned derivation uses Roblin's transverse intersection argument \cite{Roblin}; see also \cite{OhShah,MohammadiOh,KelmerOh,EdwardsLeeOh} for more expositions of the argument, the latter being the most general. We also refer to loc. cit. for explicit formulas and further details for the BMS measure $\BMS$ and the Burger--Roblin (BR) measures $\BR$ and $\BRstar$ which appear in the above theorem---in this article, knowing their existence and the form of the main term in the above theorem suffices.

	\begin{remark}
		\cite[Theorem 1.2]{SarkarWinter} and \cite[Theorem 1.2]{LiPanSarkar} are stated, a priori, for torsion-free $\Gamma$. It also holds for non-torsion-free $\Gamma$ by passing to a torsion-free subgroup of finite index $\tilde{\Gamma} \leq \Gamma$ by Selberg's lemma; the measures $\Haar$, $\BMS$, $\BR$, and $\BRstar$ on $\tilde{\Gamma}\bsl G$ are simply the lifts of those on $\GmodGamma$ up to normalization by the constant factor $[\Gamma: \tilde{\Gamma}]$.
	\end{remark}
	
	\begin{cor}
		\label{cor:Laplace transform and exp mixing}
		Let $\eta > 0$ be as in \cref{thm:decay of correlations for Haar}.
		For all $\phi,\psi\in \compactsmooth(\GmodGamma)$, define the Laplace transform of the real and imaginary parts of the scaled matrix coefficient $F_\spadesuit: \{z \in \C: \Re(z) > d - \critexp\} \to \C$ by
		\begin{align}
			F_\spadesuit(z) = \int_0^{+\infty} e^{-(z+\critexp-d)t} \spadesuit\bigl(\langle \phi \circ a_t, \psi\rangle_{L^2(\GmodGamma)}\bigr) \, dt \qquad \text{for all $z \in \C$ with $\Re(z) > d - \critexp$},
		\end{align}
		for $\spadesuit \in \{\Re, \Im\}$. Then, $F_\spadesuit$ admits a meromorphic extension to the half-plane $\{z \in \C: \Re(z) > -\eta\}$ with at most a simple pole at $z=0$ of residue $\spadesuit\Bigl(\int_{\GmodGamma} \phi \, d\BR \int_{\GmodGamma} \overline{\psi} \, d\BRstar\Bigr)$, for $\spadesuit \in \{\Re, \Im\}$.
	\end{cor}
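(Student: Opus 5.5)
The plan is to use the asymptotic formula of \cref{thm:decay of correlations for Haar} and integrate it termwise against the Laplace kernel. Fix $\phi,\psi\in\compactsmooth(\GmodGamma)$ and write
\begin{align*}
\langle \phi\circ a_t,\psi\rangle_{L^2(\GmodGamma)} = \int_{\GmodGamma} (\phi\circ a_t)\,\overline{\psi}\,d\Haar = e^{-(d-\critexp)t} c + R(t),
\end{align*}
where $c := \int_{\GmodGamma}\phi\,d\BR\int_{\GmodGamma}\overline{\psi}\,d\BRstar$. This comes from applying \cref{thm:decay of correlations for Haar} to the pair $\phi,\overline{\psi}$, which is still in $\compactsmooth(\GmodGamma)$. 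The remainder satisfies $|R(t)|\ll_{\phi,\psi} e^{-(d-\critexp+\eta)t}$ for $t>0$. Taking $\spadesuit\in\{\Re,\Im\}$ is harmless: $\spadesuit$ is real-linear, so $\spadesuit(\langle\phi\circ a_t,\psi\rangle) = e^{-(d-\critexp)t}\spadesuit(c) + \spadesuit(R(t))$, and $|\spadesuit(R(t))|\le |R(t)|$.

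Substituting into the definition of $F_\spadesuit$ for $\Re(z)>d-\critexp$ gives
\begin{align*}
F_\spadesuit(z) = \spadesuit(c)\int_0^{+\infty} e^{-zt}\,dt + \int_0^{+\infty} e^{-(z+\critexp-d)t}\spadesuit(R(t))\,dt = \frac{\spadesuit(c)}{z} + G_\spadesuit(z).
\end{align*}
The first integral converges for $\Re(z)>0$. For the second, the integrand is bounded by $\ll e^{-(\Re(z)+\eta)t}$, so it converges absolutely and locally uniformly on $\{\Re(z)>-\eta\}$. By dominated convergence together with Morera's theorem (or differentiation under the integral sign), $G_\spadesuit$ is holomorphic there.

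Hence $z\mapsto \spadesuit(c)/z + G_\spadesuit(z)$ is a meromorphic function on $\{\Re(z)>-\eta\}$ that agrees with $F_\spadesuit$ on the original domain. Its only possible pole is a simple pole at $0$, with residue $\spadesuit(c)$, which is the claimed value. The pole is absent exactly when $\spadesuit(c)=0$, which is why the statement says "at most".

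There is no real obstacle here. The only points needing care are, first, the convention that the inner product conjugates $\psi$, which explains why $\overline{\psi}$ appears in the residue. Second, one must note that the implicit constant in \cref{thm:decay of correlations for Haar} depends only on $\phi$ and $\psi$, so that for fixed test functions the bound on $R(t)$ is uniform in $t>0$.
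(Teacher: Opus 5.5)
Your proposal is correct and follows the paper's proof essentially verbatim. Both subtract $\spadesuit(c)/z$, the Laplace transform of the main term from \cref{thm:decay of correlations for Haar}, and observe that the remainder's Laplace transform converges absolutely and is holomorphic on $\{z \in \C : \Re(z) > -\eta\}$; your explicit remark that the theorem must be applied to the pair $\phi, \overline{\psi}$ to match the conjugate-linear inner product is a useful clarification that the paper leaves implicit.
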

	
	\begin{proof}
		Let $\eta>0$ be a parameter satisfying \cref{thm:decay of correlations for Haar} and let $\phi,\psi\in \compactsmooth(\GmodGamma)$. We give the proof for $F_\Re$ as the proof for $F_\Im$ is identical.
		Denote $M_{\phi, \psi} := \Re\Bigl(\int_{\GmodGamma} \phi \, d\BR \int_{\GmodGamma} \overline{\psi} \, d\BRstar\Bigr)$.
		Then, by \cref{thm:decay of correlations for Haar}, the function
		\begin{align*}
			F_\Re(z) - \int_0^{+\infty} e^{-zt} M_{\phi, \psi} \, dt = F_\Re(z)-\frac{M_{\phi, \psi}}{z},
		\end{align*}
		defined initially on $\{z \in \C: \Re(z) > d - \critexp\}$, extends holomorphically to the half-plane $\{z \in \C: \Re(z) >-\eta\}$.
		The corollary now follows since $M_{\phi,\psi}/z$ extends meromorphically to $\C$ with at most a simple pole at $z = 0$ of residue $M_{\phi, \psi}$.
	\end{proof}

	\section{Comparison of Laplace transforms and the proofs of Theorems~\ref{thm:SSG conjecture} and \ref{thm:upgraded decay of correlations for Haar}}
	\label{sec:Laplace comparison}
	
	In this section, we prove \cref{thm:SSG conjecture}.
	Property \cref{itm: gap for complementary series} of \cref{def:SSG} is established in \cref{prop:SSG conjecture part 2}, while Property \cref{itm: non-spherical complementary series at critexp} is proved in \cref{prop:SSG conjecture part 1}.
	An outline of the proof strategy is given in \cref{sec:outline}.

	\subsection{Preliminaries on Stieltjes transforms}
	
	Recall that for a signed measure $\nu$, infinite values $+\infty$ and $-\infty$ are permitted but not both. Consequently, in the Jordan decomposition $\nu = \nu^+ - \nu^-$ into the positive and negative parts $\nu^\pm$ of $\nu$ (which are positive measures), at least one of $\nu^+$ or $\nu^-$ must be finite. Moreover, if $\nu$ is finite, then both $\nu^+$ and $\nu^-$ must be finite, and hence its total variation $|\nu| = \nu^+ + \nu^-$ is also finite. We refer the reader to \cite[Chapter 3]{Folland} for further details. We also record following elementary lemma.

	\begin{lem}
		\label{lem: 0 on intervals 0 measure}
		Let $I \subset \R$ be an open interval and $E \subset I$ be a dense subset. Let $\nu$ be a signed Borel measure on $I$ such that $\nu([a, b]) = 0$ for all closed intervals $[a, b] \subset I$ with $a, b \in E$. Then, $\nu$ is the $0$ measure.
	\end{lem}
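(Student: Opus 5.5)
The plan is to reduce to the finite case via the Jordan decomposition and then use a standard $\pi$-$\lambda$ (Dynkin) argument, or equivalently outer regularity. First I would handle finiteness. The statement allows $\nu$ to be a signed measure on $I$ that may take one infinite value. So I would first restrict to a compact subinterval $[a,b] \subset I$ with $a,b \in E$; this is possible since $E$ is dense. On such an interval the hypothesis gives $\nu([a,b]) = 0$, which is finite. For every Borel $B \subset [a,b]$, $\nu(B)$ is then finite, since otherwise $\nu([a,b]\smallsetminus B)$ would have to be infinite of the opposite sign, which is forbidden. Hence $\nu|_{[a,b]}$ is a finite signed measure, and $\nu^\pm|_{[a,b]}$ are finite positive Borel measures.

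Next I would pass from closed intervals with endpoints in $E$ to all half-open intervals. Fix $[a,b]$ as above and any $c<c'$ in $(a,b)$. Choose sequences $c_n \downarrow c$ and $c'_n \downarrow c'$ in $E$, with $c_n<c'_n$ eventually. Then $[c_n,c'_n] \to (c,c']$ in the sense that the indicators converge pointwise, except possibly at the boundary points $c$ and $c'$. More carefully, $\mathbf{1}_{[c_n,c'_n]} \to \mathbf{1}_{(c,c']}$ pointwise on $[a,b]$, since $c$ is eventually excluded and $c'$ is always included. Dominated convergence applied to the finite measures $\nu^\pm$ then gives $\nu((c,c']) = \lim_n \nu([c_n,c'_n]) = 0$. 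Single points are handled similarly: $\{x\} = \bigcap_n [x_n,x'_n]$ with $x_n \uparrow x$ and $x'_n \downarrow x$ in $E$, so $\nu(\{x\})=0$.

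Half-open intervals $(c,c']$ together with $\emptyset$ form a $\pi$-system generating the Borel $\sigma$-algebra of $(a,b)$. The finite positive measures $\nu^+$ and $\nu^-$ agree on this $\pi$-system, and they also agree on the whole space $(a,b)$, since $(a,b)$ is an increasing union of such intervals and continuity from below applies. By Dynkin's $\pi$-$\lambda$ theorem, $\nu^+ = \nu^-$ on all Borel subsets of $(a,b)$, so $\nu|_{(a,b)} = 0$. Finally I would exhaust $I$ by increasing intervals $(a_n,b_n)$ with $a_n,b_n \in E$. Countable additivity, or continuity from below applied to $\nu^\pm$ on each Borel $B\cap(a_n,b_n)$, gives $\nu(B)=0$ for every Borel $B\subset I$.

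The main obstacle is only the bookkeeping around possibly infinite values. That is why the first step establishes finiteness on compact subintervals before any limit is taken, after which everything is routine.
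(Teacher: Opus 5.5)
Your proof is correct and follows the same route as the paper: write $\nu=\nu^+-\nu^-$ and show $\nu^+=\nu^-$ because the two measures agree on a family of intervals that generates the Borel $\sigma$-algebra. Your localization to compact $E$-intervals and your explicit $\pi$-$\lambda$ argument supply finiteness and uniqueness details that the paper leaves implicit. The paper does not need your passage to half-open intervals, since closed intervals with endpoints in $E$ already form a $\pi$-system (up to empty and degenerate intervals). Moreover, $\nu^\pm$ are finite on these intervals because one of $\nu^\pm$ is finite, so uniqueness applies to them directly.
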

	\begin{proof}
		Let $I$, $E$, and $\nu$ be as in the lemma. We have the Jordan decomposition $\nu = \nu^+ - \nu^-$ where the positive and negative parts $\nu^\pm$ are positive Borel measures on $I$. It follows by hypothesis that $\nu^+([a, b]) = \nu^-([a, b])$ for all intervals $[a, b] \subset I$ with $a, b \in E$. Since the set of closed intervals whose endpoints are in the dense subset $E \subset I$ generate the Borel $\s$-algebra on $I$, it follows that $\nu^+ = \nu^-$. Hence, the lemma follows.
	\end{proof}
	
	\begin{remark}
		We cannot repeat the above proof for closed intervals $I \subset \R$ since the dense subset $E \subset I$ may not contain either endpoints of $I$.
	\end{remark}

	The following is the Stieltjes inversion formula corresponding to a Stieltjes transform for \emph{signed} measures. It is proved in \cite[Chapter~XIII, \S 65]{Wall} for \emph{positive} measures; however, its elementary proof can be adapted for signed measures. We include it here for completeness, especially because the proposition plays a crucial role in the proof of our main theorem.

	\begin{prop}[Stieltjes Inversion Formula]
		\label{prop:Stieltjes inversion}
		Let $\nu$ be a finite signed Borel measure on $\R$.
		Let $F: \C \smallsetminus\R \to \C$ be the \textbf{Stieltjes transform} of $\nu$ defined by
		\begin{align*}
			F(z) = \int_{\R} \frac{1}{z - t} \, d\nu(t) \qquad \text{for all $z \in \C \smallsetminus\R$}.
		\end{align*}
		Then $F$ is holomorphic on $\C \smallsetminus\R$ and for any closed interval $[a, b] \subset \R$, we have
		\begin{align*}
			\frac{\nu([a, b)) + \nu((a, b])}{2} = -\frac{1}{\pi} \lim_{y \searrow 0} \int_a^b \Im F(x + iy) \, dx.
		\end{align*}
		In particular, if $\nu(\{a\}) = \nu(\{b\}) = 0$ and $F$ extends continuously to $(\C \smallsetminus\R) \cup [a,b]$, and hence holomorphically to $(\C \smallsetminus\R) \cup (a,b)$, then $\nu([a,b])=0$.
	\end{prop}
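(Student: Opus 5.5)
The plan is to follow the classical elementary proof for positive measures, verifying each step with the Jordan decomposition $\nu = \nu^+ - \nu^-$ into finite positive measures. Since $|\nu|$ is finite and $|z - t|^{-1} \le |\Im z|^{-1}$, the integrand is bounded on compact subsets of $\C \smallsetminus \R$. Differentiation under the integral sign (or Morera's theorem with Fubini) therefore shows that $F$ is holomorphic there. Equivalently, $F = F^+ - F^-$, where $F^\pm$ are the Stieltjes transforms of $\nu^\pm$.

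For the inversion formula, I would compute
\begin{align*}
-\Im F(x+iy) = \int_\R \frac{y}{(x-t)^2+y^2}\, d\nu(t).
\end{align*}
Integrating over $x \in [a,b]$ and applying Fubini, which is justified by the finiteness of $|\nu|$ and the boundedness of the integrand in $x$ for fixed $y > 0$, gives
\begin{align*}
-\frac{1}{\pi}\int_a^b \Im F(x+iy)\,dx = \int_\R g_y(t)\, d\nu(t),
\qquad
g_y(t) := \frac{1}{\pi}\Bigl(\arctan\frac{b-t}{y} - \arctan\frac{a-t}{y}\Bigr).
\end{align*}
Then $0 \le g_y \le 1$, and as $y \searrow 0$ we have $g_y(t) \to \mathbf{1}_{(a,b)}(t) + \tfrac12 \mathbf{1}_{\{a,b\}}(t)$ pointwise. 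Dominated convergence with respect to $|\nu|$ (dominating function $1$) yields the limit $\nu((a,b)) + \tfrac12\nu(\{a\}) + \tfrac12 \nu(\{b\})$, which equals $\tfrac12(\nu([a,b)) + \nu((a,b]))$. This is the only place where the signed setting matters, and it is handled entirely by working with $|\nu|$.

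For the final statement, suppose $F$ extends continuously to $(\C \smallsetminus \R) \cup [a,b]$. The values $F(x)$ for $x \in [a,b]$ are approached from above and below. Since $\nu$ is a real measure, $F(\bar z) = \overline{F(z)}$, so the continuous boundary value on $[a,b]$ must be real, being the common limit of conjugate values. The function $x \mapsto \Im F(x+iy)$ is continuous on the compact set $[a,b] \times [0,1]$ and hence uniformly bounded there. By dominated convergence in $x$, $\lim_{y\searrow0}\int_a^b \Im F(x+iy)\,dx = \int_a^b \Im F(x)\,dx = 0$. The inversion formula, together with $\nu(\{a\}) = \nu(\{b\}) = 0$, then gives $\nu([a,b]) = 0$.

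The parenthetical claim about a holomorphic extension to $(\C\smallsetminus\R)\cup(a,b)$ follows from the Schwarz reflection principle, or equivalently from Morera's theorem applied to a continuous function that is holomorphic off a segment. The main point needing care is the reality of the boundary values. Even this is immediate from the conjugation symmetry, so I expect no real obstacle: the whole argument is the positive-measure proof with $|\nu|$ used for domination.
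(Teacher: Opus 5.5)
Your proposal is correct and follows essentially the same route as the paper. Both use Morera/Fubini for holomorphy, then Fubini to turn $\int_a^b \Im F(x+iy)\,dx$ into an integral of arctangent differences against $\nu$, then dominated convergence with respect to $|\nu|$, and finally the symmetry $F(\bar z)=\overline{F(z)}$ to force $\Im F=0$ on $[a,b]$. The only differences are cosmetic: you write the pointwise limit directly as $\mathbf{1}_{(a,b)}+\tfrac12\mathbf{1}_{\{a,b\}}$ rather than via a sign step function, and you use boundedness plus dominated convergence in $x$ rather than uniform continuity at the last step.
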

	
	\begin{proof}
		Let $\nu$, $F$, and $[a, b]$ be as in the proposition. Define a finite signed Borel measure $\tilde{\nu}$ on $[a, b] \times \R$ by $d\tilde{\nu}(x, t) = dx \, d\nu(t)$. For any compact subset $\mathcal{K} \subset \R_{>0}$ and $y \in \mathcal{K}$, we have the uniform bound
		\begin{align*}
			\int_{[a, b] \times \R} \left|\frac{1}{x + iy - t}\right| \, d|\tilde{\nu}|(x, t) \leq \frac{1}{\inf \mathcal{K}} \cdot |\tilde{\nu}|([a, b] \times \R) < +\infty.
		\end{align*}
		By similar uniform bounds, we may apply Fubini's theorem for integrals of $F$ over arbitrary line segments in $\C \smallsetminus \R$ which are parallel to the real or imaginary axis.
		Hence, holomorphicity of $z\mapsto \frac{1}{z - t}$ on $\C \smallsetminus \R$ together with Fubini's theorem imply vanishing of contour integrals of $F$ over rectangular contours in $\C\smallsetminus \R$ (with sides parallel to the real and imaginary axes).
		It follows that $F$ is holomorphic on $\C\smallsetminus \R$ by Morera's theorem (see \cite[Chapter 2, \S 5, Theorem 5.1]{SteinShakarchi} and \cite[Chapter 2, \S 7, Problem 3]{SteinShakarchi}).

		By the above bound, for any $y > 0$, we may integrate the imaginary part of $F$ over $[a, b]$ and apply Fubini's theorem again to obtain
		\begin{align*}
			\int_a^b \Im F(x + iy) \, dx &= \int_a^b \int_{\R} \frac{-y}{(x - t)^2 + y^2} \, d\nu(t) \, dx = \int_{\R} \int_a^b \frac{-1/y}{(x - t)^2/y^2 + 1} \, dx \, d\nu(t) \\
			&= \int_{\R} \arctan((t - b)/y) \, d\nu(t) - \int_{\R} \arctan((t - a)/y) \, d\nu(t).
		\end{align*}
		Now, we introduce the step function $h = -\chi_{(-\infty, 0)} + \chi_{(0, +\infty)}: \R \to \R$ and observe the pointwise limit
		\begin{align*}
			\lim_{y \searrow 0} \arctan(t/y) = \frac{\pi}{2}h(t) \qquad \text{for all $t \in \R$}.
		\end{align*}
		Recall that $|\arctan| \leq \frac{\pi}{2}$ and the total variation $|\nu|$ is finite since $\nu$ is finite. Thus, we may apply the Lebesgue dominated convergence theorem to obtain the limit
		\begin{align*}
			\lim_{y \searrow 0} \int_a^b \Im F(x + iy) \, dx &= \lim_{y \searrow 0} \int_{\R} \arctan((t - b)/y) \, d\nu(t) - \lim_{y \searrow 0} \int_{\R} \arctan((t - a)/y) \, d\nu(t) \\
			&= \frac{\pi}{2}\int_{\R} h(t - b) \, d\nu(t) - \frac{\pi}{2}\int_{\R} h(t - a) \, d\nu(t) \\
			&= -\frac{\pi}{2}\bigl(\nu((-\infty, b)) - \nu((b, +\infty)) - \nu((-\infty, a)) + \nu((a, +\infty))\bigr) \\
			&= -\pi\frac{\nu([a, b)) + \nu((a, b])}{2}.
		\end{align*}

		Let us deduce the last claim of the proposition. Suppose $F$ extends continuously to $(\C \smallsetminus \R) \cup [a, b]$. Observe that for any rectangular contour (with sides parallel to the real and imaginary axes) that intersects $(a, b)$, we may decompose it into 3 rectangular contours where only one of them intersects $(a, b)$ and is arbitrarily thin in the imaginary direction. In light of this observation, we again deduce using Morera's theorem (see loc. cit. in \cite{SteinShakarchi}) that $F$ is holomorphic on $(\C \smallsetminus \R) \cup (a, b)$. Now, since $\nu$ is a \emph{real}-valued measure, we have the property that $\overline{F(z)} = F(\overline{z})$ for all $z \in \C \smallsetminus \R$ and hence $\Im F(z) = 0$ for all $z \in [a, b]$ by continuity. Thus, by uniform continuity of $F$ on $[a, b] + i[-1, 1]$, we have
		\begin{align*}
			\lim_{y \searrow 0} \int_a^b \Im F(x + iy) \, dx = \int_a^b \lim_{y \searrow 0} \Im F(x + iy) \, dx = 0.
		\end{align*}
		Suppose further that $\nu(\{a\}) = \nu(\{b\}) = 0$. Then, it follows from the proven identities that $\nu([a, b]) = \frac{\nu([a, b)) + \nu((a, b])}{2} = 0$.
	\end{proof}

	\begin{cor}
		\label{cor: complex measure zero}
		Let $\nu=\nu_\Re+i\nu_\Im$ be a complex Borel measure on $\R$ where $\nu_\Re$ and $\nu_\Im$ are signed measures, and consider the functions $F_\Re$ and $F_\Im$ defined by
		\begin{align*}
			F_\Re(z)&=\int_\R \frac{1}{z-t} \, d\nu_\Re(t), & F_\Im(z)&=\int_\R \frac{1}{z-t} \, d\nu_\Im(t), \qquad \text{for all $z \in \C \smallsetminus \R$}.
		\end{align*}
		Suppose there exists a closed interval $[a, b] \subset \R$ such that the following holds:
		\begin{enumerate}
			\item\label{itm: no atoms on dense subset} there exists a dense subset $E\subset [a,b]$ such that $\nu(\{x\})=0$ for all $x\in E$;
			\item\label{itm: Stieltjes transforms extend continuously} $F_\Re$ and $F_\Im$ extend continuously to $(\C \smallsetminus\R) \cup [a,b]$.
		\end{enumerate}
		Then $\nu|_{(a,b)}$ is the zero measure.
	\end{cor}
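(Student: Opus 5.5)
The plan is to reduce to \cref{prop:Stieltjes inversion} applied separately to the signed measures $\nu_\Re$ and $\nu_\Im$, and then to \cref{lem: 0 on intervals 0 measure}. Implicitly $\nu$ is a complex Borel measure, hence finite. So $\nu_\Re$ and $\nu_\Im$ are finite signed Borel measures on $\R$, and $F_\Re$, $F_\Im$ are exactly their Stieltjes transforms in the sense of \cref{prop:Stieltjes inversion}.

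First I will fix a closed subinterval $[a',b'] \subset (a,b)$ whose endpoints lie in $E' := E \cap (a,b)$. The set $E'$ is dense in the open interval $(a,b)$ because $E$ is dense in $[a,b]$. By hypothesis~\cref{itm: no atoms on dense subset}, $\nu(\{a'\}) = 0$. Taking real and imaginary parts gives $\nu_\Re(\{a'\}) = \nu_\Im(\{a'\}) = 0$, and the same holds at $b'$.

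Next, hypothesis~\cref{itm: Stieltjes transforms extend continuously} says that $F_\Re$ and $F_\Im$ extend continuously to $(\C\smallsetminus\R)\cup[a,b]$. Restricting, they extend continuously to $(\C\smallsetminus\R)\cup[a',b']$. The final clause of \cref{prop:Stieltjes inversion} then yields $\nu_\Re([a',b']) = 0$ and $\nu_\Im([a',b']) = 0$. Both measures are real-valued, which the proposition requires in order to conclude $\Im F = 0$ on the real segment. This is the only delicate point, and it is the reason the complex measure is split into its real and imaginary parts rather than handled directly.

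Finally, I apply \cref{lem: 0 on intervals 0 measure} with $I = (a,b)$, the dense subset $E'$, and the restricted signed measures $\nu_\Re|_{(a,b)}$ and $\nu_\Im|_{(a,b)}$. Both vanish on every closed interval with endpoints in $E'$, so both are the zero measure. Hence $\nu|_{(a,b)} = \nu_\Re|_{(a,b)} + i\,\nu_\Im|_{(a,b)} = 0$. No step is a real obstacle. One only needs to check that the endpoint-atom condition of the proposition is met, which is why the argument uses subintervals with endpoints in $E'$ and concludes only on the open interval $(a,b)$.
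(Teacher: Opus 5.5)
Your proposal is correct and is the same argument as the paper's: you apply the final clause of \cref{prop:Stieltjes inversion} separately to the finite signed measures $\nu_\Re$ and $\nu_\Im$ on closed subintervals with endpoints in $E$, and then invoke \cref{lem: 0 on intervals 0 measure} on $(a,b)$. Restricting to the dense set $E \cap (a,b)$, and noting that $\nu(\{x\})=0$ forces $\nu_\Re(\{x\})=\nu_\Im(\{x\})=0$, simply make explicit two small points that the paper's two-line proof leaves implicit.
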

	\begin{proof}
		Since $F_\Re$ and $F_\Im$ extend continuously to $[a,b]$, then, by \cref{prop:Stieltjes inversion}, we conclude that $\nu([\alpha,\beta])=0$ for any $\alpha,\beta\in E$ with $\alpha<\beta$. Hence, by \cref{lem: 0 on intervals 0 measure} we get that both $\nu_\Re$ and $\nu_\Im$ restricted to $(a,b)$ are the zero measures. The corollary follows since $\nu = \nu_\Re + i\nu_\Im$.
	\end{proof}

	\subsection{Proof of Theorem~\ref{thm:SSG conjecture}}
	
	We now prove the main theorem. We need the following fact in order to be able apply \cref{prop:Laplace transform rep theory side}.
	
	\begin{prop}
		\label{prop: no quasi-complementary series above critexp}
		For all $\upsilon \in \MhatSD$ and $s > \critexp$, the quasi-complementary series $\Ucal(\upsilon, s)$ is not weakly contained in $L^2(\GmodGamma)$.
	\end{prop}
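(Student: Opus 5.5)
We argue by contradiction, comparing the exponential decay rate forced by a weakly contained quasi-complementary series with the decay rate provided by dynamics. Suppose $\Ucal(\upsilon, s_0)$ is weakly contained in $L^2(\GmodGamma)$ for some $\upsilon \in \MhatSD$ and $s_0 > \critexp$, with $s_0 \in \Ical_\upsilon$. By \cref{lem: m strong/weak containment}, $s_0 \in \supp(m_\upsilon)$. Hence for any $\epsilon>0$ the spectral subspace
\[
\Hcal_\epsilon:=\int_{(s_0-\epsilon,s_0+\epsilon)}^\oplus \Hcal(\upsilon,s)\,dm_\upsilon(s)
\]
is nonzero, with $\epsilon$ chosen so that $s_0-\epsilon>\critexp$.

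Next we pick good test vectors. Let $\tau$ be the $K$-type provided by \cref{prop:nonvanishing} for $\upsilon$. By \cref{cor:nonvanishing}, $\Ucal(\upsilon,s)_\tau\cap\Ucal(\upsilon,s)_\upsilon\neq 0$ for each $s$, and $\mathsf T_\tau^\tau\mathsf C_+(s)$ acts there by the nonzero scalar $c(s)=\frac{\dim\tau}{\dim\upsilon}\mathsf C_+(\tau:\upsilon;s)$. This scalar is real, positive and continuous in $s$, as the Gamma formula in \cref{thm:EguchiKoizumiMamiuda formula} shows. Since $\mathsf P_\tau\mathsf P_\upsilon$ (with $\mathsf{P}_\upsilon$ the $M$-isotypic projection) commutes with $G$-decompositions, we can take a nonzero $\phi\in L^2(\GmodGamma)$ lying in $\Hcal_\epsilon$ whose components $\phi_{\upsilon,s}$ all lie in the $(\tau,\upsilon)$-isotypic part. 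Concretely, $\phi$ is the projection of some vector onto $\Hcal_\epsilon$, followed by $\mathsf P_\tau \mathsf P_\upsilon$. Nonvanishing of $\phi$ holds because the $(\tau,\upsilon)$-part is nonzero in each fiber.

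The lower bound comes from \cref{cor:matrix coeff expand} with $I=[s_0-\epsilon,s_0+\epsilon]$, integrated over the direct integral. It gives
\[
\langle\phi\circ a_t,\phi\rangle=\int e^{-(d-s)t}c(s)\|\phi_{\upsilon,s}\|^2\,dm_\upsilon(s)+O\Bigl(\int e^{-(d-s+\eta_s)t}\|\phi_{\upsilon,s}\|^2\,dm_\upsilon(s)\Bigr).
\]
Since $c\geq c_0>0$ on $I$, the main term is $\geq c_0 e^{-(d-s_0+\epsilon)t}\|\phi\|^2$ and dominates the error for large $t$. So $\langle\phi\circ a_t,\phi\rangle\gg e^{-(d-s_0+\epsilon)t}$.

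The upper bound comes from dynamics. By \cref{lem: compact dense in tau type} (with an additional $M$-projection preserving compact support), approximate $\phi$ by $\phi'\in\compactsmooth(\GmodGamma)$. By \cref{thm:decay of correlations for Haar}, $|\langle\phi'\circ a_t,\phi'\rangle|\ll_{\phi'}e^{-(d-\critexp)t}$, which decays strictly faster since $s_0-\epsilon>\critexp$. The main obstacle is passing from $\phi'$ to $\phi$: the implicit constant depends on the support of $\phi'$, so a plain approximation does not control rates.

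To handle this, we apply the spectral projection directly. The projection onto $\Hcal_\epsilon$ is given by a spectral projection of the Casimir operator restricted to the $\tau$-isotypic part, where the Casimir acts by $s(d-s)$. Consider the positive measure
\[
\mu_{\phi'}(s)=c(s)\|\phi'_{\upsilon,s}\|^2\,dm_\upsilon(s).
\]
By the expansion above, $\langle\phi'\circ a_t,\phi'\rangle$ equals $\int e^{-(d-s)t}\,d\mu_{\phi'}(s)$ plus lower-order terms. Choose $\phi'$ close enough to $\phi$ that $\mu_{\phi'}(I)>0$. The contribution from $s$ near $s_0$ is then $\gg e^{-(d-s_0+\epsilon)t}$. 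The remaining spectrum contributes either positive terms, namely quasi-complementary pieces at the same $\tau$ with positive $c$ if we arrange positivity, or terms decaying faster. The latter are handled by \cref{prop:MatrixDecay} applied to the complement with $s_\star=s_0-\epsilon$. Any other $\sigma\subset\tau$ with parameters above $s_0-\epsilon$ must also be treated; we would restrict to the $M$-type $\upsilon$, which removes them. The resulting lower bound contradicts the dynamical bound $e^{-(d-\critexp)t}$ as $t\to\infty$.

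If the positivity bookkeeping fails, an alternative is to use \cref{prop:Laplace transform rep theory side} together with \cref{cor:Laplace transform and exp mixing}, with $\d_\Hcal$ the top of the support. The dynamical Laplace transform is holomorphic for $\Re z>d-\critexp$ in the original normalization, whereas the representation side has a nonzero Stieltjes measure near $s_0$. \cref{cor: complex measure zero} then forces that measure to vanish, which is a contradiction.
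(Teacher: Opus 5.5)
Your lower bound for the spectrally localized vector $\phi\in\Hcal_\epsilon$ is fine. The gap is in transferring it to the compactly supported $\phi'$, specifically in your claim that restricting to the $M$-type $\upsilon$ removes the other $\sigma\subset\tau$.

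\emph{Why the claim fails.} The $M$-action available on $L^2(\GmodGamma)$ is the right action, which on each fiber is $\pi_{\sigma,s}|_M=\lambda_K|_M$. The inducing parameter $\sigma$ is defined through $\rho_K|_M$ on $L^2(K : \sigma)$ and cannot be seen by this action. Since $\Ucal(\sigma,s)_\tau\cong\tau$ contains the $M$-type $\upsilon$ for every $\sigma\subset\tau$, the components $\phi'_{\sigma,s}$ with $\sigma\neq\upsilon$ survive the projection.

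\emph{Why those components break the lower bound.} By \cref{lem:T_tau as projection} their main terms vanish, but their remainders $O(e^{-(d-s+\eta_s)t})$ from \cref{cor:matrix coeff expand} do not, and these remainders have no sign. Under your contradiction hypothesis, nothing prevents $m_\sigma$ from charging parameters $s>s_0+\eta_s$. There the remainders decay more slowly than your lower bound $e^{-(d-s_0+\epsilon)t}$.

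\emph{Misapplication of \cref{prop:MatrixDecay}.} For the same reason you cannot apply it to ``the complement'' with $s_\star=s_0-\epsilon$. That complement may weakly contain $\Ucal(\sigma,s)$ with $s>s_\star$, both for $\sigma\neq\upsilon$ and for $\upsilon$ itself above $s_0+\epsilon$. This violates the hypothesis of the proposition.

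\emph{The missing idea is an extremal choice of $(\upsilon,s_0)$.}
\begin{itemize}
\item Let $S$ be the supremum of $\bigcup_\sigma\supp(m_\sigma)$, and set $\eta_*:=\min\{2\critexp-d,1\}$, so that $\eta_*\leq\eta_s$ for all $s\geq\critexp$.
\item Pick $s_0\in\supp(m_\upsilon)$ with $s_0>S-\eta_*/2$, and pick $\epsilon<\eta_*/2$ with $s_0-\epsilon>\critexp$.
\item The $\sigma\neq\upsilon$ remainders are then $O(e^{-(d-S+\eta_*)t})$, which is $o(e^{-(d-s_0+\epsilon)t})$.
\item The $\upsilon$-pieces with parameters in $(s_\star,S]$ have nonnegative main terms $c(s)\|\phi'_{\upsilon,s}\|^2e^{-(d-s)t}$. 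These dominate their remainders uniformly for large $t$, by the uniformity of \cref{cor:matrix coeff expand} on the compact interval $[s_\star,S]$.
\item \cref{prop:MatrixDecay} applies to the subrepresentation with parameters $\leq s_\star$, for any $s_\star\in(d/2,s_0-\epsilon)$.
\end{itemize}
With this choice your comparison against $O_{\phi'}(e^{-(d-\critexp)t})$ goes through.

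\emph{The Laplace-transform fallback.} It needs the same extremal choice, namely $\d_\Hcal=S$, so that the hypothesis of \cref{prop:Laplace transform rep theory side} holds and $s_0$ lies in the window $[S-\eta,S]$. Its dynamical input is also misstated. Holomorphy on $\Re z>d-\critexp$ is trivial and never reaches the real point $z=s_0-\critexp\leq d-\critexp$. What you need is holomorphy on $\Re z>0$, which follows from the decay $O_{\phi,\psi}(e^{-(d-\critexp)t})$ for compactly supported functions.

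\emph{How the paper avoids this.} The same decay shows $L^2(\GmodGamma)$ is strongly $L^p$ with $p=d/(d-\critexp)$. Shalom's theorem says this property passes to weakly contained representations, which is exactly the support-independent control you were looking for. Then \cref{cor:nonvanishing} together with \cref{cor:matrix coeff expand} gives a $K$-finite coefficient of $\Ucal(\upsilon,s)$ of exact size $e^{-(d-s)t}$, which is not in $L^{p+\epsilon}(G)$. Your repaired argument is more hands-on but not independent of that input, since \cref{prop:MatrixDecay} is itself proved via Shalom's theorem.
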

	
	\begin{proof}

		By qualitative mixing of the frame flow together with Roblin's transverse intersection argument, cf.\ \cite[Theorem 1.4]{Winter}, we have for all $\phi,\psi\in \mathit{C}_{\mathrm{c}}(\GmodGamma)$, the estimate $\langle \phi \circ a_t,\psi\rangle_{L^2(\GmodGamma)} = O_{\phi, \psi}(e^{-(d-\critexp)t})$ as $t\to+\infty$.\footnote{This is the qualitative form of \cref{thm:decay of correlations for Haar} with no error rate.}
		Hence, the matrix coefficients $g\mapsto \langle \rho_G(g)\phi,\psi\rangle_{L^2(\GmodGamma)}$ belong to $L^{p+\epsilon}(G)$ for all $\epsilon>0$, and for $p:=\frac{d}{d - \critexp}$.
		This follows by a similar argument to the proof of \cref{prop:MatrixDecay} using the formula for the Haar measure on $G$.
		Since $\mathit{C}_{\mathrm{c}}(\GmodGamma)$ is dense in $L^2(\GmodGamma)$, this means that the representation $(\rho_G, L^2(\GmodGamma))$ is strongly $L^p$ by definition (in the terminology of \cite{Shalom}).
		Hence, \cite[Theorem 2.1]{Shalom} implies that every representation that is weakly contained in $L^2(\GmodGamma)$ is also strongly $L^p$.
		On the other hand, given $\upsilon \in \MhatSD$ and $s>\critexp$, the quasi-complementary series $\Ucal(\upsilon,s)$ is not strongly $L^p$, and hence cannot be weakly contained in $L^2(\GmodGamma)$.
		Indeed, by \cref{cor:nonvanishing}, there exists a $K$-type $\t \in \widehat{K}$ such that for all $s \in \Ical_\upsilon$, we have that the operator $\mathsf{T}_\t^\t\mathsf{C}_+(s)$ is non-zero on $\Ucal(\upsilon, s)_\t$. Combined with \cref{cor:matrix coeff expand}, this shows that there exist $K$-finite vectors in $\Ucal(\upsilon, s)$ for which the matrix coefficients are not in $L^{p+\epsilon}(G)$ for any sufficiently small $\epsilon>0$. Hence, \cite[Theorem 2.1]{Shalom} implies that the representation $\Ucal(\upsilon,s)$ cannot be strongly $L^p$.
	\end{proof}
	
	In what follows, we denote by $\eta_{\mrm{rep}} > 0$ and $\eta_{\mrm{mix}} > 0$ any constants for which \cref{prop:Laplace transform rep theory side} and \cref{cor:Laplace transform and exp mixing} hold, respectively.

	\begin{prop}
		\label{prop:SSG conjecture part 2}
		Fix $\eta := \min\{\eta_{\mrm{rep}}, \eta_{\mrm{mix}}\} \in (0,\critexp-d/2)$. For all $\upsilon \in \MhatSD$ and $s \in (\critexp - \eta, \critexp)$, the quasi-complementary series $\Ucal(\upsilon, s)$ is not weakly contained in $L^2(\GmodGamma)$.
	\end{prop}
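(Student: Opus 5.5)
Fix $\upsilon \in \MhatSD$ and argue by contradiction: suppose some $s_0 \in (\critexp - \eta, \critexp)$ lies in $\supp(m_\upsilon)$ for $\Hcal = L^2(\GmodGamma)$ (weak containment, \cref{lem: m strong/weak containment}). By \cref{prop: no quasi-complementary series above critexp}, the hypothesis of \cref{prop:Laplace transform rep theory side} holds with $\d_\Hcal = \critexp$.

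First choose test vectors. Let $\tau$ be the $K$-type provided by \cref{prop:nonvanishing} for $\upsilon$. Since we want the main term to detect $m_\upsilon$, take $\phi \in \compactsmooth(\GmodGamma)\cap L^2(\GmodGamma)_\tau$, made also $M$-type $\upsilon$ by applying $\mathsf{P}_\upsilon$ (this preserves compact support, smoothness and the $K$-type, since $\mathsf{P}_\upsilon$ commutes with $\mathsf{P}_\tau$ on the $\tau$-isotypic part). Set $\psi = \phi$. The Laplace transforms $F_\Re, F_\Im$ of \cref{prop:Laplace transform rep theory side} and \cref{cor:Laplace transform and exp mixing} are the same functions on $\Re(z) > d-\critexp$, so they agree on overlaps. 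Hence they extend meromorphically to $\Re(z) > -\eta_{\mrm{mix}}$ with at most a pole at $0$. Therefore $B_\spadesuit = F_\spadesuit - A_\spadesuit$ extends holomorphically across $(-\eta', 0)$ for any $\eta' < \eta$ slightly smaller.

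Next apply Stieltjes inversion. Substituting $w = -z - \critexp$, $B_\spadesuit$ is (up to sign) the Stieltjes transform of the finite signed measure
\[
d\nu_\spadesuit(s) = \sum_{\sigma\subset\tau} \spadesuit\bigl(\langle \mathsf{T}_\tau^\tau \mathsf{C}_+(s)\phi_{\sigma,s},\phi_{\sigma,s}\rangle\bigr)\,dm_\sigma(s)
\]
on $[\critexp-\eta',\critexp]$. Finiteness follows from Cauchy--Schwarz and $\|\phi\|$. By \cref{lem: m has countable atoms} there is a dense set of non-atoms. \cref{cor: complex measure zero} then gives that $\nu = \nu_\Re + i\nu_\Im$ vanishes on $(\critexp-\eta',\critexp)$.

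The main obstacle is upgrading $\nu = 0$ to $m_\upsilon = 0$ near $s_0$.
\begin{itemize}
\item The sum runs over all $\sigma \subset \tau$. Because $\phi$ has $M$-type $\upsilon$, the components $\phi_{\sigma,s}$ also lie in $\Hcal(\sigma,s)_\tau\cap\Hcal(\sigma,s)_\upsilon$. By \cref{lem:T_tau as projection}, $\mathsf{T}_\tau^\tau$ acts as $\frac{\dim\tau}{\dim\sigma}\mathsf{P}_{\sigma^*}$ and $\mathsf{C}_+$ preserves $M$-types, so only $\sigma=\upsilon$ survives in the self-dual case. By \cref{cor:nonvanishing} the density is then $c(s)\|\phi_{\upsilon,s}\|^2$ with $c(s) = \frac{\dim\tau}{\dim\upsilon}\mathsf{C}_+(\tau:\upsilon;s)$, a continuous function, nonzero for $s>d/2$.
\item Hence $\|\phi_{\upsilon,s}\|^2\,dm_\upsilon(s) = 0$ on the interval for every such $\phi$.
\item By density (\cref{lem: compact dense in tau type}, then $\mathsf{P}_\upsilon$), we can choose a countable family of $\phi$ whose components span $\Hcal(\upsilon,s)_\tau\cap\Hcal(\upsilon,s)_\upsilon$ for $m_\upsilon$-a.e.\ $s$. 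This space is nonzero for every $s\in\Ical_\upsilon$ by \cref{cor:nonvanishing}, since $\tau$ is a minimal $K$-type.
\item Together these force $m_\upsilon((\critexp-\eta',\critexp))=0$, contradicting $s_0\in\supp(m_\upsilon)$.
\end{itemize}
Finally, let $\eta'\nearrow\eta$ to cover the full range. The constants do not depend on $\upsilon$, because $\eta_{\mrm{rep}}$ and $\eta_{\mrm{mix}}$ are uniform; this uniformity is the point that gives a \emph{strong} gap.
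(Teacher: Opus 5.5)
Your proposal is correct and follows essentially the paper's argument. You compare the decomposition $F_\spadesuit = A_\spadesuit + B_\spadesuit$ from \cref{prop:Laplace transform rep theory side} with the meromorphic continuation from \cref{cor:Laplace transform and exp mixing}, read $B_\spadesuit$ as a Stieltjes transform, and combine \cref{cor: complex measure zero} with the non-vanishing in \cref{cor:nonvanishing}. The Stieltjes variable is $w = z + \critexp$, not $-z-\critexp$, but this slip is harmless.

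The only difference is bookkeeping. You project to $M$-type $\upsilon$ and take $\psi=\phi$, so the density becomes $c(s)\|\phi_{\upsilon,s}\|^2$, and you run the density argument after obtaining vanishing. The paper instead uses density (\cref{lem: compact dense in tau type}) first, to pick $\phi,\psi$ with $\langle \mathsf{Q}_\tau\phi,\psi\rangle\neq 0$, and contradicts that directly.
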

	
	\begin{proof}
		
		Fix the constant $\eta := \min\{\eta_{\mrm{rep}}, \eta_{\mrm{mix}}\} \in (0,\critexp-d/2)$ as in the proposition. Let us denote $\LtwoGmodGamma := L^2(\GmodGamma)$.

		For the purposes of our proof, let us define the following objects for all $K$-types $\tau \in \widehat{K}$ and $M$-types $\sigma \in \MhatSD$. First define the Hilbert subspaces
		\begin{align*}
			\LtwoGmodGamma_{\t,\sigma} &:= \int_{\Ical_\sigma\cap (\critexp-\eta,\critexp)}^\oplus \LtwoGmodGamma(\sigma,s)_\t \,dm_\sigma(s) \qquad \subset \LtwoGmodGamma_\t \subset \LtwoGmodGamma, \\
			\Jcal_{\t,\sigma} &:=\int_{\Ical_\sigma \cap (\critexp-\eta,\critexp)}^\oplus  \LtwoGmodGamma(\sigma,s)_\t \cap \LtwoGmodGamma(\sigma,s)_{\sigma} \,dm_\sigma(s)
			\qquad \subset \LtwoGmodGamma_{\t,\sigma} \subset \LtwoGmodGamma_\t \subset \LtwoGmodGamma.
		\end{align*}
		Note that the Hilbert subspaces corresponding to different elements of $\MhatSD$ are mutually orthogonal, i.e.,
		\begin{align}\label{eq:orthogonal complementary series dir int}
			\sigma, \sigma' \in \MhatSD \text{ such that } \sigma \neq \sigma' \Longrightarrow
			\LtwoGmodGamma_{\t,\sigma} \perp \LtwoGmodGamma_{\t,\sigma'}.
		\end{align}
		Define the bounded operator $\mathsf{Q}_{\t,\sigma}: \LtwoGmodGamma_{\t,\sigma} \to \LtwoGmodGamma_{\t,\sigma}$ by
		\begin{align*}
			\mathsf{Q}_{\t,\sigma} := \int_{\Ical_\sigma \cap (\critexp-\eta,\critexp)}^\oplus \mathsf{T}_\t^\t|_{\LtwoGmodGamma(\sigma,s)_\t} \mathsf{C}_+(s)|_{\LtwoGmodGamma(\sigma,s)_\t}  \,dm_\sigma(s).
		\end{align*}
		We then extend it to an operator $\mathsf{Q}_{\t,\sigma}: \LtwoGmodGamma \to \LtwoGmodGamma$ by setting it to be the $0$ operator on $(\LtwoGmodGamma_{\t,\sigma})^\perp$. Finally, we define the operator $\mathsf{Q}_\t: \LtwoGmodGamma \to \LtwoGmodGamma$ by
		\begin{align}
			\mathsf{Q}_\t := \sum_{\sigma \in \MhatSD: \sigma \subset \t} \mathsf{Q}_{\t,\sigma}.
		\end{align}

		Now we begin the proof in earnest. Suppose for the sake of contradiction that $\Ucal(\upsilon, s_\star)$ is weakly contained in $\LtwoGmodGamma$ for some $\upsilon \in \MhatSD$ and $s_\star \in (\critexp - \eta, \critexp)$.
		Let $\t\in \widehat{K}$ be the corresponding $K$-type provided by \cref{prop:nonvanishing}. We first claim that 
		\begin{align}\label{eq:nonvanishing of main terms sum}
			\text{there exist functions $\phi, \psi \in \compactsmooth(\GmodGamma) \cap \LtwoGmodGamma_\t$ such that } \langle \mathsf{Q}_\t \phi, \psi \rangle_\LtwoGmodGamma \neq 0.
		\end{align}
		Indeed, by our assumption, $s_\star \in \supp(m_\upsilon)$ and hence $m_\upsilon((\critexp - \eta, \critexp)) > 0$. By \cref{cor:nonvanishing}, for all $s \in \Ical_\upsilon$, the Hilbert subspace $\LtwoGmodGamma(\upsilon,s)_\t\cap \LtwoGmodGamma(\upsilon,s)_{\upsilon} \subset \LtwoGmodGamma(\upsilon,s)$ is non-trivial and the operator $\mathsf{T}_\t^\t \mathsf{C}_+(s)$ acts on it by a non-zero scalar. It follows from the definition of direct integrals that:
		\begin{itemize}
			\item $\Jcal_{\t,\upsilon} \subset \LtwoGmodGamma_{\t,\upsilon}$ are \emph{non-trivial} Hilbert subspaces of $\LtwoGmodGamma_\t \subset \LtwoGmodGamma$;
			\item $\mathsf{Q}_{\t,\upsilon}$ acts as a \emph{non-zero} bounded operator on $\LtwoGmodGamma_{\t, \upsilon}$, $\LtwoGmodGamma_\t$, and $\LtwoGmodGamma$;
			\item $\mathsf{Q}_\t$ acts as a \emph{non-zero} bounded operator on $\LtwoGmodGamma_\t$ and $\LtwoGmodGamma$, due to the orthogonality relations in \cref{eq:orthogonal complementary series dir int}.
		\end{itemize}
		By the above facts, there exist $\widetilde{\phi}, \widetilde{\psi} \in \LtwoGmodGamma_\t$ such that
		\begin{align}\label{eq:Qcal_upsilon}
			\bigl\langle \mathsf{Q}_\t \widetilde{\phi}, \widetilde{\psi} \bigr\rangle_\LtwoGmodGamma \neq 0.
		\end{align}
		Since $\compactsmooth(\GmodGamma) \cap \LtwoGmodGamma_\t$ is a dense subspace of $\LtwoGmodGamma_\t$ by \cref{lem: compact dense in tau type}, Claim \eqref{eq:nonvanishing of main terms sum} follows.

		Next, we can rewrite \cref{eq:nonvanishing of main terms sum} more explicitly as follows:
		\begin{align*}
			\langle \mathsf{Q}_{\t} \phi, \psi\rangle_\LtwoGmodGamma
			= \sum_{\sigma \in \MhatSD: \sigma \subset \t}
			\int_{(\critexp-\eta,\critexp)} \mathbf{1}_{\Ical_{\sigma}}(s)
			\langle  \mathsf{T}_\t^\t \mathsf{C}_+(s) \phi_{\sigma,s}, \psi_{\sigma,s} \rangle_{\LtwoGmodGamma(\sigma,s)} \,d m_{\sigma}(s) \neq 0.
		\end{align*}
		This motivates introducing the complex measure $\nu_\t$ on $\R$ defined by
		\begin{align*}
			d\nu_\t(s) = \sum_{\sigma \in \MhatSD: \sigma\subset \t} \mathbf{1}_{\Ical_\sigma}(s) \langle  \mathsf{T}_\t^\t \mathsf{C}_+(s) \phi_{\sigma,s}, \psi_{\sigma,s} \rangle_{\LtwoGmodGamma(\sigma,s)} \, d m_\sigma(s).
		\end{align*}
		Then, we may further rewrite \cref{eq:nonvanishing of main terms sum} as
		\begin{align}
			\label{eq:nonvanishing of main terms measure}
			\nu_\t((\critexp - \eta, \critexp)) \neq 0.
		\end{align}
		By \cref{lem: m has countable atoms}, $\nu_\t$ has at most a countable number of atoms, which we recall are all singletons. As a consequence, we obtain a dense subset
		\begin{align*}
			E_\t = \{x \in \R: \text{$\{x\}$ is not an atom of $\nu_\t$}\} \subset \R
		\end{align*}
		with the property that $\nu_\t(\{x\}) = 0$ for all $x \in E_\t$. Thus, Property~\cref{itm: no atoms on dense subset} of \cref{cor: complex measure zero} is satisfied.
		
		Now, \cref{prop: no quasi-complementary series above critexp} says that $(\rho_G,\LtwoGmodGamma)$ does not weakly contain any quasi-complementary series $\Ucal(\s,s)$ with parameters $\s\in \MhatSD$ and $s>\critexp$. Thus, \cref{prop:Laplace transform rep theory side} applies to the representation $(\pi,\Hcal) = (\rho_G,\LtwoGmodGamma)$ with $\d_\Hcal=\critexp$. Let $\spadesuit \in \{\Re, \Im\}$. Applying the proposition, the Laplace transform of the $\spadesuit$ part of the scaled matrix coefficient defined by
		\begin{align}
			F_\spadesuit(z) = \int_0^{+\infty} e^{-(z+\critexp- d)t} \spadesuit\bigl(\langle \phi \circ a_t, \psi \rangle_{L^{2}(\GmodGamma)}\bigr) \,dt \qquad \text{for all $z \in \C$ with $\Re(z) > 0$}
		\end{align}
		admits a holomorphic extension to $\{z \in \C: \Re(z)>-\eta_{\mrm{rep}}\} \smallsetminus (-\eta_{\mrm{rep}}, 0]$ and can be written as a sum
		\begin{align*}
			F_\spadesuit = A_\spadesuit + B_\spadesuit
		\end{align*}
		where $A_\spadesuit$ is a bounded holomorphic function and $B_\spadesuit$ is a meromorphic function both on the half-plane $\{z \in \C: \Re(z) > -\eta_{\mrm{rep}}\}$ and $B_\spadesuit$ is defined explicitly by the formula
		\begin{align}
			B_\spadesuit(z)
			&= \sum_{\sigma \in \MhatSD: \sigma \subset \t} \int_{[\critexp-\eta,\critexp]}
			\frac{\spadesuit\bigl(\langle \mathsf{T}_\t^\t \mathsf{C}_+(s) \phi_{\sigma,s}, \psi_{\sigma,s} \rangle_{\LtwoGmodGamma(\sigma,s)}\bigr)}{z + \critexp - s} \, d m_{\sigma}(s) \qquad \text{for all $z \in \C$ with $\Re(z) > -\eta_{\mrm{rep}}$}.
		\end{align}
		Note that the integral in the definition of $B_\spadesuit$ is over the \emph{closed} interval $[\critexp-\eta,\critexp]$ as opposed to the integral in the definition of $\mathsf{Q}_{\t}$ which is over the \emph{open} interval $(\critexp-\eta,\critexp)$; only the right endpoint $\critexp$ is of significance here. In particular,  note that $s = \critexp$, a priori, may contribute a pole to $F_\spadesuit$; but in this proposition, we are only concerned with detecting whether the parameters $s \in (\critexp-\eta,\critexp)$ contribute poles to $F_\spadesuit$. \cref{prop:SSG conjecture part 1} below will treat the endpoint $s = \critexp$. 
		
		Write $\nu_\t|_{[\critexp-\eta,\critexp]} = \bigl(\nu_\t|_{[\critexp-\eta,\critexp]}\bigr)_\Re + i\bigl(\nu_\t|_{[\critexp-\eta,\critexp]}\bigr)_\Im$ where $\bigl(\nu_\t|_{[\critexp-\eta,\critexp]}\bigr)_\Re$ and $\bigl(\nu_\t|_{[\critexp-\eta,\critexp]}\bigr)_\Im$ are signed measures. Observe also that $B_\spadesuit(z)$ is the Stieltjes transform of $\bigl(\nu_\t|_{[\critexp-\eta,\critexp]}\bigr)_\spadesuit$ at $z+\critexp$.

		Now, \cref{cor:Laplace transform and exp mixing} also applies for our choice of $\phi$ and $\psi$. Let $\spadesuit \in \{\Re, \Im\}$. Applying the corollary, $F_\spadesuit$ admits a meromorphic extension to $\{z \in \C: \Re(z) > -\eta_{\mrm{mix}}\}$, with exactly one simple pole at $z=0$ of residue $\spadesuit\Bigl(\int_{\GmodGamma} \phi \, d\BR \int_{\GmodGamma} \overline{\psi} \, d\BRstar\Bigr)$.
		In particular, $B_\spadesuit$ extends holomorphically to intervals on the real line of the form $[a, b]$, for all $-\eta < a < b < 0$. Thus, Property~\cref{itm: Stieltjes transforms extend continuously} of \cref{cor: complex measure zero} is satisfied.
		We may now apply \cref{cor: complex measure zero} to conclude that $\nu_\t|_{(\critexp - \eta, \critexp)}$ is the zero measure.
		In particular, this contradicts \cref{eq:nonvanishing of main terms measure} and completes the proof.
	\end{proof}

	To prepare for the proof of the second proposition, we set some notation.
	Given a unitary representation $(\pi,\Hcal)$ of $G$, and more generally, of $M$, we write
	\begin{align*}
		\Hcal=\Hcal^M\oplus \Hcal^0
	\end{align*}
	for the orthogonal decomposition of $\Hcal$ into the subspace $\Hcal^M$ of $M$-invariant vectors and its orthogonal complement $\Hcal^0$. In particular, given a $K$-type $\t\in\widehat{K}$, we also write
	\begin{align*}
		\Hcal_\t=\Hcal_\t^M\oplus \Hcal_\t^0.
	\end{align*}
	For $v\in \Hcal$, we denote by $v^M$ and $v^0$ its orthogonal projections onto $\Hcal^M$ and $\Hcal^0$ respectively, i.e.,
	\begin{align}
		\label{eqn: M invariant projection}
		v^M &= \int_M \pi(m) v \, dm, & v^0 = v - v^M.
	\end{align}

	We record the following lemma, which is an immediate consequence of \cref{lem:T_tau as projection}.
	\begin{lem}
		\label{lem:T_tau preserves M-0 decomp}
		Let $\sigma\in \MhatSD$ and $s\in\Ical_\sigma$. Let $\t\in \widehat{K}$ be a $K$-type containing $\sigma$. Then, the operator $\mathsf{T}_\t^\t$ preserves the orthogonal decomposition $\Ucal(\sigma,s)_\t = \Ucal(\sigma,s)_\t^M \oplus \Ucal(\sigma,s)_\t^0$.
		Moreover, if $\sigma$ is non-trivial, then $\mathsf{T}_\t^\t \left|_{\Ucal(\sigma,s)_\t^M} \right.$ vanishes.
	\end{lem}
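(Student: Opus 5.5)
The plan is to derive everything from \cref{lem:T_tau as projection}, which identifies $\mathsf{T}_\t^\t$ on $\Ucal(\sigma,s)_\t$ with $\frac{\dim(\t)}{\dim(\sigma)}\mathsf{P}_{\sigma^*}$. Since $\sigma$ is self-dual, $\mathsf{P}_{\sigma^*}=\mathsf{P}_\sigma$. This is the orthogonal projection, for the restricted representation of $M$, onto the $\sigma$-isotypic subspace of $\Ucal(\sigma,s)_\t$. So up to a nonzero constant, $\mathsf{T}_\t^\t$ is an $M$-isotypic projection, and both claims become statements about such projections.

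\emph{Preservation of the decomposition.} The $M$-invariant subspace $\Ucal(\sigma,s)_\t^M$ is exactly the isotypic component of type $\triv\in\Mhat$. Its orthogonal complement $\Ucal(\sigma,s)_\t^0$ is the Hilbert sum of the isotypic components of non-trivial $M$-types. Isotypic projections for distinct $M$-types are mutually orthogonal and commute with each other, so $\mathsf{P}_\sigma$ commutes with the projection $v\mapsto v^M$ from \cref{eqn: M invariant projection}. Hence $\mathsf{P}_\sigma$, and therefore $\mathsf{T}_\t^\t$, maps each of $\Ucal(\sigma,s)_\t^M$ and $\Ucal(\sigma,s)_\t^0$ into itself. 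One could also see this directly: $\mathsf{T}_\t^\t$ intertwines with $\lambda_M$, hence commutes with averaging over $M$.

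\emph{Vanishing for non-trivial $\sigma$.} If $\sigma\neq\triv$, the $\sigma$-isotypic component and the $\triv$-isotypic component are orthogonal. Thus $\mathsf{P}_\sigma v=0$ for every $v\in\Ucal(\sigma,s)_\t^M$, and so $\mathsf{T}_\t^\t|_{\Ucal(\sigma,s)_\t^M}=0$.

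The only subtle point is the case of the ends of complementary series. There, \cref{lem:T_tau as projection} applies on $\Ucal(\sigma,s)$, a quotient of $L^2(K:\sigma)$. This is fine: the lemma's final sentence already asserts the identity on $\Ucal(\sigma,s)$. Moreover, the $M$-action descends to the quotient, so the isotypic decomposition, and with it the splitting into $M$-invariant vectors and their complement, is compatible with the quotient map. With that checked, no further obstacle is expected.
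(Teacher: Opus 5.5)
Your proposal is correct and follows essentially the same route as the paper. The paper gets the first assertion from $\mathsf{T}_\t^\t$ preserving $M$-types, which is your alternative of commuting with the $M$-averaging projection $v \mapsto v^M$; for the second, it uses \cref{lem:T_tau as projection} to identify $\mathsf{T}_\t^\t$ with a multiple of $\mathsf{P}_{\sigma^*}=\mathsf{P}_\sigma$, whose image is orthogonal to $\Ucal(\sigma,s)_\t^M$ when $\sigma$ is non-trivial.
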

	\begin{proof}
		Let $\sigma\in \MhatSD$ and $s\in\Ical_\sigma$.
		The first assertion follows by recalling that $\mathsf{T}_\t^\t$ preserves $M$-types. \Cref{lem:T_tau as projection} asserts that the operator $\mathsf{T}_\t^\t$ restricted to $\Ucal(\sigma,s)_\t$ is a scalar multiple of the orthogonal projection onto $\Ucal(\sigma,s)_{\sigma}$ as $\sigma$ is self-dual. The second assertion follows upon noting that $\Ucal(\sigma,s)_\t^M$ is orthogonal to the subspace $\Ucal(\sigma,s)_{\sigma}$ since $\sigma$ is non-trivial.
	\end{proof}

	Recall from \cref{rem:SSG(1) Justification} that having established Property \cref{itm: gap for complementary series} of \cref{def:SSG} in \cref{prop:SSG conjecture part 2}, for any non-trivial $\upsilon \in \MhatSD$, a weak containment of $\Ucal(\upsilon, \critexp)$ in $L^2(\GmodGamma)$ is automatically a strong containment. We now prove the second proposition, establishing \cref{def:SSG}\cref{itm: non-spherical complementary series at critexp} and completing the proof of \cref{thm:SSG conjecture}.
	
	\begin{prop}
		\label{prop:SSG conjecture part 1}
		Let $\upsilon\in \MhatSD$ be such that $\critexp\in \Ical_\upsilon$. Then, we have the following.
		\begin{enumerate}
			\item\label{itm:simplicity of bottom of spectrum} If $\upsilon =\triv$ is trivial, then the quasi-complementary series $\Ucal(\triv,\critexp)$ is contained in $L^2(\GmodGamma)$ exactly with multiplicity $1$.
			\item\label{itm:non-sphericals do not occur at bottom} If $\upsilon$ is non-trivial, then the quasi-complementary series $\Ucal(\upsilon, \critexp)$ is not contained in $L^2(\GmodGamma)$.
		\end{enumerate}
	\end{prop}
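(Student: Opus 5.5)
We compare residues at $z=0$ of the two Laplace transforms. By \cref{prop:SSG conjecture part 2} and \cref{prop: no quasi-complementary series above critexp}, the spectral measures $m_\sigma$ vanish on $(\critexp-\eta,\critexp)\cup(\critexp,d]$. Hence on $[\critexp-\eta,\critexp]$ each $m_\sigma$ is supported at the single point $\critexp$, which must then be an atom. The plan is:
\begin{itemize}
\item[(a)] describe the residue at $0$ of $F_\spadesuit$ from \cref{prop:Laplace transform rep theory side};
\item[(b)] equate it with the residue from \cref{cor:Laplace transform and exp mixing};
\item[(c)] exploit the rank-one nature of the dynamical residue.
\end{itemize}
Concretely, fix a $K$-type $\tau$ and $\phi,\psi\in\compactsmooth(\GmodGamma)\cap\LtwoGmodGamma_\tau$. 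Then
\[
B_\spadesuit(z)=\frac{1}{z}\sum_{\sigma\subset\tau}m_\sigma(\{\critexp\})\,\spadesuit\bigl\langle \mathsf{T}_\tau^\tau\mathsf{C}_+(\critexp)\phi_{\sigma,\critexp},\psi_{\sigma,\critexp}\bigr\rangle .
\]
Here $\Re+i\Im$ recombine into the bilinear form $\langle \mathsf{Q}^\star_\tau\phi,\psi\rangle$, where $\mathsf{Q}^\star_\tau$ is the operator acting as $\mathsf{T}_\tau^\tau\mathsf{C}_+(\critexp)$ on the $\critexp$-isotypic pieces $\LtwoGmodGamma(\sigma,\critexp)_\tau$. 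Since $A_\spadesuit$ is bounded near $0$, matching residues with the dynamical side gives
\[
\langle \mathsf{Q}^\star_\tau\phi,\psi\rangle=\int\phi\,d\BR\int\overline{\psi}\,d\BRstar .
\]
By density (\cref{lem: compact dense in tau type}), continuity of the left side in $\phi,\psi\in\LtwoGmodGamma_\tau$ extends this identity. In particular, the bilinear form $\langle\mathsf{Q}^\star_\tau\cdot,\cdot\rangle$ on $\LtwoGmodGamma_\tau$ has rank at most one.

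For \cref{itm:non-sphericals do not occur at bottom}, suppose $\Ucal(\upsilon,\critexp)$ with $\upsilon\neq\triv$ is contained in $\LtwoGmodGamma$, and take $\tau$ from \cref{prop:nonvanishing}. By \cref{cor:nonvanishing}, $\mathsf{Q}^\star_\tau$ is a nonzero scalar on the $\upsilon$-part $\LtwoGmodGamma(\upsilon,\critexp)_\tau\cap\LtwoGmodGamma(\upsilon,\critexp)_\upsilon$, which is nonzero. By \cref{lem:T_tau preserves M-0 decomp}, $\mathsf{Q}^\star_\tau$ kills the $M$-invariant vectors in the nontrivial-$\sigma$ pieces and preserves the $M$-decomposition. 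The main obstacle is to see that the dynamical side is blind to this part. For $\phi$ in the image of the $\upsilon$-isotypic $M$-projection, i.e.\ $\phi=\phi^0$ with trivial $M$-invariant part, we have $\int\phi\,d\BR=0$: $\BR$ is right $M$-invariant (it lives on $\GmodGamma$ and is lifted from $\GmodGamma/M$ in its construction), so $\int\phi\,d\BR=\int\phi^M\,d\BR=0$. Choosing $\phi=\psi$ in the $\upsilon$-part (nonzero since the atom exists) then gives $0\neq\langle\mathsf{Q}^\star_\tau\phi,\phi\rangle=0$, a contradiction. I expect the delicate points to be checking right $M$-invariance of $\BR,\BRstar$, and checking that the $\upsilon$-isotypic $M$-projection preserves $K$-type $\tau$ and compact support. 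The latter holds because $\mathsf{P}_\upsilon$ commutes with $\mathsf{P}_\tau$ on the $M$-isotypic level within the $K$-type, as both are averages over the right action.

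For \cref{itm:simplicity of bottom of spectrum}, take $\tau=\triv$. Only $\sigma=\triv$ contributes, and $\mathsf{T}_\triv^\triv\mathsf{C}_+(\critexp)$ is the nonzero scalar $\mathsf{C}_+(\triv:\triv;\critexp)$ (by \cref{cor:nonvanishing} with the trivial $K$-type, which is $\tau$ for $\sigma=\triv$). So $\langle\mathsf{Q}^\star_\triv\phi,\psi\rangle$ equals $c\,\langle P\phi,\psi\rangle$ with $c\neq0$, where $P$ is the orthogonal projection onto the $K$-invariant vectors of the $\Ucal(\triv,\critexp)$-isotypic component. The rank-one identity forces $\operatorname{rank}P\le1$, so the multiplicity is at most $1$. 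For existence, \cref{thm:decay of correlations for Haar} with nonnegative $K$-invariant $\phi=\psi$ having positive $\BR$ and $\BRstar$ integrals makes the residue nonzero, hence $P\neq0$. Positivity of these integrals follows from nonzero-ness of $\BR$ and $\BRstar$ and the fact that their $K$-averages remain nonzero. Since each $\Ucal(\triv,s)$ has a one-dimensional $K$-fixed space, the multiplicity is exactly $1$.
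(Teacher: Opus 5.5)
Your proposal is correct and follows the paper's proof. You obtain the same identity $Q_{\mathrm{rep}}=Q_{\mathrm{mix}}$ on $\compactsmooth(\GmodGamma)\cap\LtwoGmodGamma_\tau$, matching residues at $z=0$ instead of letting $t\to+\infty$ in the two asymptotic expansions; this is only a repackaging, and the paper's outline describes it that way. You then get Part~(1) from the rank-one structure of $Q_{\mathrm{mix}}$, and Part~(2) from the right $M$-invariance of $\BR$ and $\BRstar$ together with \cref{cor:nonvanishing}, restricting to the $\upsilon$-isotypic $M$-part rather than the full complement of the $M$-invariants. One minor imprecision: $m_\sigma$ may charge the endpoint $\critexp-\eta$. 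That term is holomorphic near $z=0$, and you can also remove it by shrinking $\eta$, so the residue computation is unaffected.
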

	
	\begin{proof}
		Fix some $\upsilon\in \MhatSD$ such that $\critexp\in \Ical_\upsilon$. If $\critexp = d$, then $\upsilon = \triv$ and hence $\Ucal(\upsilon, \critexp) = \Ucal(\triv, d) = \triv \in \Ghat$, and $\Gamma < G$ is a lattice. Thus, Part~\cref{itm:simplicity of bottom of spectrum} trivially follows. We may now assume $\critexp < d$.
		If $\upsilon=\triv$, let $\t = \triv\in \widehat{K}$ be the trivial $K$-type.
		Otherwise, if $\upsilon$ is non-trivial, let $\t\in\widehat{K}$ be the $K$-type provided by \cref{prop:nonvanishing}.

		Let us denote $\LtwoGmodGamma = L^2(\GmodGamma)$. First, having established \cref{prop:SSG conjecture part 2}, we may isolate the contribution of $\LtwoGmodGamma(\sigma, \critexp)$ for $\sigma \in \MhatSD$ satisfying $\sigma \subset \t$ and $\Ucal(\sigma, \critexp) \subset \LtwoGmodGamma$ in the following fashion (cf. \cref{rem:SSG(1) Justification}). Fix $s_\star := \critexp - \eta$ where $\eta := \min\{\eta_{\mrm{rep}}, \eta_{\mrm{mix}}\}$ as in \cref{prop:SSG conjecture part 2} and define
		\begin{align*}
			\LtwoGmodGamma(s_\star) := \Hilbertoplus_{\sigma \in \MhatSD} \int_{\Ical_\sigma \cap (d/2, s_\star]}^\oplus \LtwoGmodGamma(\sigma, s) \, dm_\sigma(s).
		\end{align*}
		By \cref{prop:SSG conjecture part 2}, $(s_\star, \critexp) \subset \Ical_\sigma \smallsetminus \supp(m_\sigma)$ for all $\sigma \in \MhatSD$ and so using \cref{lem: m strong/weak containment}, we have the orthogonal decomposition
		\begin{align*}
			\LtwoGmodGamma &= \LtwoGmodGamma_{\temp} \oplus \Hilbertoplus_{\sigma \in \MhatSD} \int_{\Ical_\sigma}^\oplus \LtwoGmodGamma(\sigma, s) \, dm_\sigma(s) \\
			&= \LtwoGmodGamma_{\temp} \oplus \Hilbertoplus_{\sigma \in \MhatSD} \int_{\Ical_\sigma \cap (d/2, \critexp)}^\oplus \LtwoGmodGamma(\sigma, s) \, dm_\sigma(s) \oplus \Hilbertoplus_{\sigma \in \MhatSD: m_\sigma(\{\critexp\}) > 0} \LtwoGmodGamma(\sigma, \critexp) \\
			&= \LtwoGmodGamma_{\temp} \oplus \LtwoGmodGamma(s_\star) \oplus \Hilbertoplus_{\sigma \in \MhatSD: \Ucal(\sigma, \critexp) \subset \LtwoGmodGamma} \LtwoGmodGamma(\sigma, \critexp).
		\end{align*}

		Let $\phi, \psi \in \compactsmooth(\GmodGamma) \cap \LtwoGmodGamma_\t$; in particular, they are $K$-finite. Let $\phi_\star$ and $\psi_\star$ denote the orthogonal projections of $\phi$ and $\psi$ onto $\LtwoGmodGamma_\star := \LtwoGmodGamma_{\temp} \oplus \LtwoGmodGamma(s_\star)$. Applying \cref{prop:MatrixDecay} for $\LtwoGmodGamma_\star$, we have
		\begin{align}
			\label{eqn: matrix coeff bounds LtwoGmodGammastar}
			|\langle \phi_\star \circ a_t, \psi_\star \rangle_{\LtwoGmodGamma}| \ll_{\phi_\star, \psi_\star} (1 + t)e^{-(d - s_\star)t} \qquad \text{for all $t > 0$}.
		\end{align}
		Recall the direct integral notation from \cref{eqn: measurable sections for direct integral}.
		Applying \cref{cor:matrix coeff expand} for each orthogonal summand $\LtwoGmodGamma(\sigma, \critexp)$ above, we have
		\begin{align*}
			\langle \pi_{\sigma, \critexp}(a_t)\phi_{\sigma, \critexp}, \psi_{\sigma, \critexp}\rangle_{\LtwoGmodGamma(\sigma, \critexp)} ={}& e^{-(d-\critexp)t} \langle \mathsf{T}_{\t}^{\t} \mathsf{C}_+(\critexp) \phi_{\sigma, \critexp}, \psi_{\sigma, \critexp} \rangle_{\LtwoGmodGamma(\sigma, \critexp)} \\
			{}&+ O_\tau\left(e^{-(d-\critexp+\eta_{\critexp})t}  \norm{\phi_{\sigma, \critexp}}_{\LtwoGmodGamma(\sigma, \critexp)} \norm{\psi_{\sigma, \critexp}}_{\LtwoGmodGamma(\sigma, \critexp)}\right) \qquad \text{for all $t > 0$}.
		\end{align*}
		Note that for all $\sigma \in \MhatSD$, the orthogonal projections $\phi_{\sigma, \critexp}$ and $\psi_{\sigma, \critexp}$ are non-trivial only if $\sigma \subset \t$ and $\Ucal(\sigma, \critexp) \subset \LtwoGmodGamma$.  Thus, we have
		\begin{align}
			\label{eqn: matrix coeff critexp main term rep}
			e^{(d - \critexp)t} \langle \phi \circ a_t, \psi \rangle_{\LtwoGmodGamma} = \sum_{\sigma \in \MhatSD: \sigma \subset \t, \Ucal(\sigma, \critexp) \subset \LtwoGmodGamma} \langle \mathsf{T}_\t^\t \mathsf{C}_+(\critexp) \phi_{\sigma,\critexp},\psi_{\sigma,\critexp}\rangle_{\LtwoGmodGamma(\sigma,\critexp)} + O_{\phi, \psi}(e^{-\eta' t}) \qquad \text{for all $t > 0$}
		\end{align}
		where we choose any positive $\eta' < \min\{\eta, \eta_{\critexp}\}$.
		Here, we note that the sum on the right hand side of \cref{eqn: matrix coeff critexp main term rep} has finitely many terms
		since there are finitely many $\sigma \in \MhatSD$ with $\sigma \subset \t$.
		
		On the other hand, by \cref{thm:decay of correlations for Haar}, we have
		\begin{align}
			\label{eqn: matrix coeff critexp main term dyn}
			e^{(d - \critexp)t} \langle \phi \circ a_t, \psi \rangle_{\LtwoGmodGamma} = \int_{\GmodGamma} \phi \, d\BR \int_{\GmodGamma} \overline{\psi} \, d\BRstar + O_{\phi, \psi}(e^{-\eta_{\mrm{mix}} t}) \qquad \text{for all $t > 0$}.
		\end{align}

		In light of \cref{eqn: matrix coeff critexp main term rep,eqn: matrix coeff critexp main term dyn}, we respectively define sesquilinear forms $Q_{\mrm{rep}}$ on $\LtwoGmodGamma_\t$ and $Q_{\mrm{mix}}$ on $\compactsmooth(\GmodGamma)\cap \LtwoGmodGamma_\t$ as follows:
		\begin{align*}
			Q_{\mrm{rep}}(\phi,\psi) &:=
			\sum_{\sigma \in \MhatSD: \sigma \subset \t, \Ucal(\sigma, \critexp) \subset \LtwoGmodGamma}
			\langle \mathsf{T}_\t^\t \mathsf{C}_+(\critexp) \phi_{\sigma,\critexp},\psi_{\sigma,\critexp}\rangle_{\LtwoGmodGamma(\sigma,\critexp)}
			\qquad
			\text{for all $\phi,\psi \in \LtwoGmodGamma_\t$}, \\
			Q_{\mrm{mix}}(\phi,\psi) &:= \int_{\GmodGamma} \phi \, d\BR \int_{\GmodGamma} \overline{\psi} \, d\BRstar
			\qquad
			\text{for all $\phi,\psi \in \compactsmooth(\GmodGamma) \cap \LtwoGmodGamma_\t$}.
		\end{align*}
		Then, equating \cref{eqn: matrix coeff critexp main term rep,eqn: matrix coeff critexp main term dyn}, and taking $t \to +\infty$, we obtain
		\begin{align}\label{eq:Q_rep=Q_mix}
			Q_{\mrm{rep}}(\phi,\psi) = Q_{\mrm{mix}}(\phi,\psi) \qquad \text{for all $\phi,\psi \in \compactsmooth(\GmodGamma) \cap \LtwoGmodGamma_\t$}.
		\end{align}

		We now prove Part~\cref{itm:simplicity of bottom of spectrum}. In this case, $\t = \triv \in \widehat{K}$ is the trivial $K$-type and we have the following:
		\begin{itemize}
			\item $\sigma = \triv \in \MhatSD$ is the only $M$-type such that $\sigma \subset \t$;
			\item $\mathsf{T}_\t^\t$ acts by the identity operator on $\Lcal(\triv,\critexp)_\t$ by \cref{lem:T_tau as projection};
			\item $\mathsf{C}_+(\critexp)$ acts by a non-zero scalar $\mathsf{C}_+(\t: \triv; \critexp)$ on $\Lcal(\triv,\critexp)_\t$; see \cref{sec:nonvanishing}.
		\end{itemize}
		Hence, we have
		\begin{align}
			\label{eqn: Q_rep on critexp spherical complementary series}
			Q_{\mrm{rep}}(\phi,\psi) = \mathsf{C}_+(\t: \triv; \critexp) \cdot \langle  \phi_{\triv,\critexp},\psi_{\triv,\critexp}\rangle_{\LtwoGmodGamma(\triv,\critexp)} \qquad \text{for all $\phi,\psi \in \LtwoGmodGamma_\t$}.
		\end{align}
		Note that $\Ucal(\triv, \critexp)$ is contained in $\LtwoGmodGamma$ with multiplicity $\geq 1$.
		Indeed, otherwise, $Q_{\mrm{rep}}$ would be the zero sesquilinear form, contradicting \cref{eq:Q_rep=Q_mix} and the fact that $Q_{\mrm{mix}}$ is certainly not the zero sesquilinear form.

		Now, suppose that $\Ucal(\triv,\critexp)$ is contained in $\LtwoGmodGamma$ with multiplicity at least $2$.
		Then, we can find two orthogonal unit vectors $u, v \in \Lcal(\triv,\critexp)_\t$.
		Since $\compactsmooth(\GmodGamma)\cap \LtwoGmodGamma_\t$ is a dense subspace of $\LtwoGmodGamma_\t$ by \cref{lem: compact dense in tau type}, we can find sequences $\{\phi^{(n)}\}_{n \in \N}$ and $\{\psi^{(n)}\}_{n \in \N}$ in $\compactsmooth(\GmodGamma)\cap \LtwoGmodGamma_\t$ such that $\phi^{(n)}\r u$ and $\psi^{(n)}\r v$ in $\LtwoGmodGamma_\t$ as $n \to \infty$. In particular,
		\begin{align}\label{eq:vectors witnessing multiplicity}
			\bigl\langle \phi^{(n)}_{\triv,\critexp} , \phi^{(n)}_{\triv,\critexp} \bigr\rangle_{\LtwoGmodGamma(\triv,\critexp)}, \bigl\langle \psi^{(n)}_{\triv,\critexp} , \psi^{(n)}_{\triv,\critexp} \bigr\rangle_{\LtwoGmodGamma(\triv,\critexp)}  \xrightarrow{n\to\infty} 1,
			\qquad \text{and} \qquad
			\bigl\langle \psi^{(n)}_{\triv,\critexp} , \phi^{(n)}_{\triv,\critexp} \bigr\rangle_{\LtwoGmodGamma(\triv,\critexp)} \xrightarrow{n\to\infty} 0.
		\end{align}
		For sufficiently large $n \in \N$, the set $\{\phi^{(n)}, \psi^{(n)}\}$ is linearly independent and we may define the $2$-dimensional subspace generated by it, $V^{(n)} := \operatorname{span}\set{\phi^{(n)}, \psi^{(n)}} \subset \compactsmooth(\GmodGamma)\cap \LtwoGmodGamma_\t$.
		Further define $Q^{(n)}_{\mrm{rep}}$ and $Q^{(n)}_{\mrm{mix}}$ to be the restrictions of the sesquilinear forms $Q_{\mrm{rep}}$ and $Q_{\mrm{mix}}$ to $V^{(n)}$, respectively.
		It follows from \cref{eqn: Q_rep on critexp spherical complementary series,eq:vectors witnessing multiplicity} and continuity of the determinant that for sufficiently large $n \in \N$, the sesquilinear form $Q^{(n)}_\mrm{rep}$ is non-degenerate and hence has rank $2$.
		
		On the other hand, the sesquilinear form $Q_{\mrm{mix}}$ has rank at most $1$ since it is given by a \emph{scalar} product of two linear forms. Consequently, for sufficiently large $n \in \N$, the same holds for the restriction $Q^{(n)}_{\mrm{mix}}$. This contradicts the fact that $Q^{(n)}_{\mrm{rep}} = Q^{(n)}_{\mrm{mix}}$ for all sufficiently large $n \in \N$ by \cref{eq:Q_rep=Q_mix} and finishes the proof of Part~\cref{itm:simplicity of bottom of spectrum}.

		We now turn to Part~\cref{itm:non-sphericals do not occur at bottom}.
		Recall the orthogonal decomposition $\Lcal_\t =\Lcal_\t^M \oplus \Lcal_\t^0$ into a subspace of $M$-invariant vectors and its orthocomplement in the notation introduced above \cref{eqn: M invariant projection}.
		Since the disintegration of the measures $\BR$ and $\BRstar$  along fibers of the projection $\GmodGamma \r \GmodGamma /M$ is given by our fixed Haar measure on $M$, we have
		\begin{align*}
			Q_{\mrm{mix}}(\phi,\psi) =0, \qquad
			\text{for all } \phi,\psi \in \compactsmooth(\GmodGamma) \cap \LtwoGmodGamma^0_\t.
		\end{align*}
		Now, $\compactsmooth(\GmodGamma) \cap \Lcal^0_\tau $ is a dense subspace of $\Lcal^0_\tau$ by \cref{lem: compact dense in tau type} and continuity of the orthogonal projection $\Lcal_\t \r \Lcal_\t^0$. Moreover, $Q_{\mrm{rep}}$ is continuous on $\LtwoGmodGamma_\tau$ due to the fact that both $\mathsf{T}_\t^\t$ and $\mathsf{C}_+(\critexp)$ are bounded operators since $\critexp > d/2$. Combined with \cref{eq:Q_rep=Q_mix}, this implies
		\begin{align*}
			Q_{\mrm{rep}}(\phi,\psi) = 0 \qquad
			\text{for all $\phi, \psi \in \LtwoGmodGamma^0_\t$}.
		\end{align*}
		Moreover, $ \{\LtwoGmodGamma(\sigma, \critexp)^0_\t\}_{ \sigma \in \MhatSD}$ is a set of mutually orthogonal subspaces. We conclude, in particular, that for all $\sigma \in \MhatSD$ contained in $\tau$, the operator $\mathsf{T}_\t^\t \mathsf{C}_+(\critexp)$ vanishes on $\LtwoGmodGamma(\sigma, \critexp)_\tau^0$.
		Moreover, \cref{lem:T_tau preserves M-0 decomp} also shows that this operator vanishes on $\LtwoGmodGamma(\sigma, \critexp)_\tau^M$ whenever $\sigma$ is non-trivial, and hence on all of $\LtwoGmodGamma(\sigma,\critexp)_\t$ in this case.
		Specializing further, this holds for $\s=\upsilon$, contradicting our choice of the $K$-type $\t$ due to \cref{cor:nonvanishing} and completing the proof.
	\end{proof}

	\subsection{Proof of Theorem~\ref{thm:upgraded decay of correlations for Haar}}
	We now prove \cref{thm:upgraded decay of correlations for Haar} using the established \cref{thm:SSG conjecture} on strong spectral gap. We require \cite[Theorem 4.8]{EdwardsOh} of Edwards--Oh which is derived from another theorem of theirs quoted earlier as \cref{thm:matrix coeff expand}.
	
	\begin{thm}[{\cite[Theorem 4.8]{EdwardsOh}}]
		\label{thm:matrix coeff expand 2}
		There exists $m > d(d + 1)/2$ such that for all $\sigma \in \MhatSD$ and $s \in \Ical_\sigma$, if the quasi-complementary series $\Ucal(\sigma, s)$ contains non-zero $M$-invariant vectors, then for all $u, v \in \Sob_K^m(\GmodGamma)$ and $t \geq 0$, we have
		\begin{align*}
			e^{(d - \critexp)t}\langle \pi_{\sigma, s}(a_t)u, v\rangle_{\Ucal(\sigma, s)}
			=  \sum_{\tau_1, \tau_2 \in \widehat{K}} \langle \mathsf{T}_{\tau_1}^{\tau_2} \mathsf{C}_+(s) \mathsf{P}_{\tau_1} u, \mathsf{P}_{\tau_2} v \rangle_{\Ucal(\sigma, s)}
			+ O_s\bigl(e^{-\eta_s t}\|u\|_{\Sob_K^m(\GmodGamma)} \|v\|_{\Sob_K^m(\GmodGamma)}\bigr).
		\end{align*}
		Here, the sum
		\begin{align*}
			\sum_{\tau_1, \tau_2 \in \widehat{K}} \langle \mathsf{T}_{\tau_1}^{\tau_2} \mathsf{C}_+(s) \mathsf{P}_{\tau_1} u, \mathsf{P}_{\tau_2} v \rangle_{\Ucal(\sigma, s)}
		\end{align*}
		converges absolutely.
	\end{thm}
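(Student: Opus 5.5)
The plan is to reduce to the single-$K$-type expansion of \cref{thm:matrix coeff expand} and then sum over $K$-types, using Sobolev regularity to make the sum converge. I read the normalizing factor on the left-hand side as $e^{(d-s)t}$, which is the exponent appearing in \cref{thm:matrix coeff expand}. I also read the vectors $u,v$ as lying in $\Sob_K^m(\Ucal(\sigma,s))$.

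\textbf{Step 1 (decomposition).} Write $u=\sum_{\tau_1}\mathsf{P}_{\tau_1}u$ and $v=\sum_{\tau_2}\mathsf{P}_{\tau_2}v$, where only $K$-types $\tau_i\supset\sigma$ occur. By sesquilinearity,
\[
\langle \pi_{\sigma,s}(a_t)u,v\rangle=\sum_{\tau_1,\tau_2}\langle \pi_{\sigma,s}(a_t)\mathsf{P}_{\tau_1}u,\mathsf{P}_{\tau_2}v\rangle .
\]
This interchange is justified once absolute convergence is established in Step 3.

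\textbf{Step 2 (single pairs).} Apply \cref{thm:matrix coeff expand} to each pair $(\tau_1,\tau_2)$ after embedding into $L^2(K:\sigma)$. For the ends of complementary series, use lifts exactly as in the proof of \cref{cor:matrix coeff expand}. To pass to the $\Ucal(\sigma,s)$ inner product, multiply by the scalar $a(\sigma,s,\tau_2)$. This gives a main term $\langle\mathsf{T}_{\tau_1}^{\tau_2}\mathsf{C}_+(s)\mathsf{P}_{\tau_1}u,\mathsf{P}_{\tau_2}v\rangle_{\Ucal(\sigma,s)}$ and an error of size
\[
e^{-\eta_s t}\,\|\mathsf{T}_{\tau_1}^{\tau_2}\|\,\|\mathsf{P}_{\tau_1}u\|_{L^2(K)}\,\|\mathsf{P}_{\tau_2}v\|_{\Sob^1_K}\,a(\sigma,s,\tau_2).
\]

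\textbf{Step 3 (polynomial control in the $K$-types).} Every factor in the Step 2 error must be bounded polynomially in the highest weights of $\tau_1,\tau_2$, when expressed in terms of $\|\cdot\|_{\Ucal(\sigma,s)}$. The relevant bounds are as follows.
\begin{itemize}
\item From its definition, $\|\mathsf{T}_{\tau_1}^{\tau_2}\|\le\|\omega_{\tau_2}\|\ll(\dim\tau_2)^{1/2}$.
\item The proof of \cref{lem: Sobolev bound} gives $\|\mathsf{P}_{\tau_2}v\|_{\Sob^1_K}\ll(1+\lambda_{\tau_2})^{1/2}\|\mathsf{P}_{\tau_2}v\|$.
\item The ratio $a(\sigma,s,\tau_1)/a(\sigma,s,\tau_2)$ is needed to convert $\|\mathsf{P}_{\tau_1}u\|_{L^2(K)}$ into a $\Ucal(\sigma,s)$-norm. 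For this I would use the Knapp--Stein/Thieleker formulas, which express $a(\sigma,s,\tau)$ as a ratio of Gamma functions in the $\tau_j\pm s$. By Stirling's formula such a ratio is polynomially bounded above and below on the $K$-types where it is nonzero, locally uniformly in $s$.
\end{itemize}

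\textbf{Step 4 (summation).} Use the Sobolev decay $\|\mathsf{P}_\tau w\|\ll(1+\lambda_\tau)^{-m/2}\|w\|_{\Sob^m_K}$, which holds because the Casimir of $K$ acts on the $\tau$-isotypic component by a scalar comparable to $\lambda_\tau$. Also use that the number of $K$-types with $\lambda_\tau\le R$ grows polynomially in $R$. Taking $m$ larger than the total polynomial loss plus the counting exponent makes both the error sum and the main-term sum absolutely convergent. This yields the bound $\|u\|_{\Sob^m_K}\|v\|_{\Sob^m_K}$, and absolute convergence also justifies the interchange in Step 1. Since $m$ depends only on $d$, we may take it $>d(d+1)/2$.

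\textbf{Main obstacle.} I expect the key difficulty to be the uniform polynomial two-sided control of $a(\sigma,s,\tau)$ in $\tau$ for fixed $\sigma,s$, especially near the ends of complementary series, where some $a(\sigma,s,\tau)$ vanish. My first attempt would be to use the explicit Gamma-ratio formula and Stirling's formula.

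The hypothesis that $\Ucal(\sigma,s)$ contains nonzero $M$-invariant vectors restricts $\sigma$ to a class for which these formulas are explicit (one-row-type $\sigma$). I would use it here, and also to keep the implied constant depending only on $s$.
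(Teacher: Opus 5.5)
This is essentially the intended argument. The paper does not reprove the statement but imports it from Edwards--Oh, who derive it from the single-$K$-type expansion \cref{thm:matrix coeff expand} by summing over $K$-types with Sobolev control; the paper's only addition is the remark that the ends of complementary series follow from the lifting argument of \cref{cor:matrix coeff expand}, exactly as in your Step 2, and your reading with $e^{(d-s)t}$ and $u,v\in\Sob_K^m(\Ucal(\sigma,s))$ is the intended one.

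Two small remarks. First, point evaluation is not contractive, so the correct bound is $\|\mathsf{T}_{\tau_1}^{\tau_2}\|\le\|\omega_{\tau_1}\|\,\|\omega_{\tau_2}\|$, which is still polynomial. Second, the two-sided control of $a(\sigma,s,\tau)$ that you flag as the main obstacle can be avoided entirely: also apply \cref{thm:matrix coeff expand} to the swapped pair via $\langle\pi_{\sigma,s}(a_t)x,y\rangle=\overline{\langle\pi_{\sigma,s}(a_t)\pi_{\sigma,s}(w^{-1})y,\pi_{\sigma,s}(w^{-1})x\rangle}$, with $w\in N_K(A)$ representing $w_0$. Both expansions must have the same main term as $t\to+\infty$, while the two error bounds carry the reciprocal factors $\bigl(a(\sigma,s,\tau_2)/a(\sigma,s,\tau_1)\bigr)^{\pm 1/2}$, so the smaller of the two bounds is polynomial in $\tau_1,\tau_2$ without any explicit Gamma-function formulas.
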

	
	\begin{remark}
		Though originally not included, \cite[Theorem 4.8]
		{EdwardsOh} of Edwards--Oh also holds for the \emph{ends} of complementary series $\Ucal(\sigma, s)$ for $\sigma \in \MhatSD$ and $s = d - \ell(\sigma) \in \partial\Ical_\sigma$ thanks to the lifting argument in the proof of \cref{cor:matrix coeff expand}. 
	\end{remark}
	
	\begin{proof}[Proof of \cref{thm:upgraded decay of correlations for Haar}]
		Let $\kappa_0:=\min\{\kappa_\Gamma, 1\}>0$ be as in the theorem, where $\kappa_\Gamma$ is the strong spectral gap parameter of $\Gamma$ defined in \cref{def:spec gap parameter}.
		Let
		\begin{align*}
			s_1:=\critexp - \kappa_0.
		\end{align*}
		We use the same decompositions as in the proof of \cref{prop:SSG conjecture part 1} with $s_\star$ replaced by the optimal parameter $s_1$.
		In particular, as in the proof of \cref{prop:SSG conjecture part 1}, we have the following decomposition:
		\begin{align*}
			\LtwoGmodGamma =
			\LtwoGmodGamma_{\temp} \oplus \LtwoGmodGamma(s_1) \oplus \Ucal(\triv, \critexp).
		\end{align*}
		Denote $\LtwoGmodGamma_1 = \LtwoGmodGamma_{\temp} \oplus \LtwoGmodGamma(s_1)$. Let $\phi, \psi \in \Sob_K^m(\GmodGamma)$ and denote their orthogonal decompositions by
		\begin{align*}
			\phi &= \phi_1 + \phi_{\triv, \critexp} \in \LtwoGmodGamma_1 \oplus \Ucal(\triv, \critexp), & \psi &= \psi_1 + \psi_{\triv, \critexp} \in \LtwoGmodGamma_1 \oplus \Ucal(\triv, \critexp).
		\end{align*}
		Exactly as in \cref{eqn: matrix coeff bounds LtwoGmodGammastar}, applying \cref{prop:MatrixDecay} for $\LtwoGmodGamma_1$, we have
		\begin{align*}
			|\langle \phi_1 \circ a_t, \psi_1 \rangle_{\LtwoGmodGamma}| \ll_{\phi_1, \psi_1} (1 + t)e^{-(d - s_1)t} \qquad \text{for all $t > 0$}.
		\end{align*}
		It remains to treat the orthogonal direct summand $\Ucal(\triv, \critexp)$. Clearly, as $\Ucal(\triv, \critexp)$ is spherical, it contains non-zero $M$-invariant vectors. Therefore, we may apply \cref{thm:matrix coeff expand 2} to $\Ucal(\triv, \critexp)$ to obtain for all $t > 0$
		\begin{multline*}
			e^{(d - \critexp)t}\langle \pi_{\triv, \critexp}(a_t)\phi_{\triv, \critexp}, \psi_{\triv, \critexp}\rangle_{\Ucal(\triv, \critexp)} =  \sum_{\tau_1, \tau_2 \in \widehat{K}} \langle \mathsf{T}_{\tau_1}^{\tau_2} \mathsf{C}_+(s) \mathsf{P}_{\tau_1} \phi_{\triv, \critexp}, \mathsf{P}_{\tau_2} \psi_{\triv, \critexp} \rangle_{\Ucal(\triv, \critexp)} \\
			+ O\bigl(e^{-\eta_{\critexp}t}\|\phi_{\triv, \critexp}\|_{\Sob_K^m(\GmodGamma)} \|\psi_{\triv, \critexp}\|_{\Sob_K^m(\GmodGamma)}\bigr),
		\end{multline*}
		where the sum in the first term converges absolutely. As in the proof of \cref{prop:SSG conjecture part 1}, taking $t\r+\infty$, and applying \cref{thm:decay of correlations for Haar}, we see that the sum in the first term is in fact equal to $\int_{\GmodGamma} \phi \, d\BR \int_{\GmodGamma} \overline{\psi} \, d\BRstar$.
		
		Thus, combining these two estimates, we obtain a decay rate of $\min\{ d-s_1,\eta_{\critexp} \} -\e$ for every $\e>0$. Note that we have the trivial inequality $d - s_1 \geq \critexp - s_1 = \kappa_0$. We also have $\eta_{\critexp} = \min\{2\critexp - d, 1\}$ by definition.
		Note further that $s_1\geq d/2$ by definition of $\kappa_\Gamma$; cf.~\cref{def:spec gap parameter}.
		Together with $\critexp > d/2$, we have $2\critexp - d > \critexp - d/2 \geq \critexp - s_1 = \kappa_0$, and hence $\eta_{\critexp} \geq \kappa_0$. This shows that $\min\{ d-s_1,\eta_{\critexp} \} \geq \kappa_0$ and completes the proof.
	\end{proof}

	\bibliographystyle{amsalpha}
	\bibliography{bibliography}
\end{document}